\theoremstyle{definition}
\newtheorem{1def}{Definition}[section]
\newtheorem{note}[1def]{Note}
\theoremstyle{plain}
\newtheorem{thm}[1def]{Theorem}
\newtheorem{lem}[1def]{Lemma}
\newtheorem{pro}[1def]{Proposition}
\newtheorem{cor}[1def]{Corollary}
\theoremstyle{remark}
\newtheorem{rmk}[1def]{Remark}
\numberwithin{equation}{section}
\begin{document}
\title[$\square_q$-modules and their Drinfel'd polynomials]{Finite-dimensional irreducible $\square_q$-modules\\ and their Drinfel'd polynomials}
\author{Yang Yang}
\address{Department of Mathematics, University of Wisconsin, Madison, WI 53706, USA}
\email{yyang@math.wisc.edu}

\begin{abstract}
Let $\mathbb{F}$ denote an algebraically closed field with characteristic $0$, and let $q$ denote a nonzero scalar in $\mathbb{F}$ that is not a root of unity. Let $\mathbb{Z}_4$ denote the cyclic group of order $4$. Let $\square_q$ denote the unital associative $\mathbb{F}$-algebra defined by generators $\{x_i\}_{i\in \mathbb{Z}_4}$ and relations
\begin{gather*}
\frac{qx_ix_{i+1}-q^{-1}x_{i+1}x_i}{q-q^{-1}}=1, \\
x_i^3x_{i+2}-[3]_qx_i^2x_{i+2}x_i+[3]_qx_ix_{i+2}x_i^2-x_{i+2}x_i^3=0,
\end{gather*}
where $[3]_q=(q^3-q^{-3})/(q-q^{-1})$. There exists an automorphism $\rho$ of $\square_q$ that sends $x_i\mapsto x_{i+1}$ for $i\in \mathbb{Z}_4$. Let $V$ denote a finite-dimensional irreducible $\square_q$-module of type $1$. To $V$ we attach a polynomial called the Drinfel'd polynomial. In our main result, we explain how the following are related:
\begin{enumerate}
\item[\rm(i)] the Drinfel'd polynomial for the $\square_q$-module $V$;
\item[\rm(ii)] the Drinfel'd polynomial for the $\square_q$-module $V$ twisted via $\rho$.
\end{enumerate}
Specifically, we show that the roots of (i) are the inverses of the roots of (ii). We discuss how $\square_q$ is related to the quantum loop algebra $U_q(L(\mathfrak{sl}_2))$, its positive part $U_q^+$, the $q$-tetrahedron algebra $\boxtimes_q$, and the $q$-geometric tridiagonal pairs.

\bigskip
\noindent
{\bf Keywords}: Equitable presentation, Drinfel'd polynomial, quantum algebra, tridiagonal pair.

\hfil\break
\noindent {\bf 2010 Mathematics Subject Classification}.\\
Primary: 17B37.\\
Secondary: 33D80.
\end{abstract}
\maketitle
\pagenumbering{arabic}
\section{Introduction}
Throughout the paper let $\mathbb{F}$ denote an algebraically closed field with characteristic $0$. We will discuss a number of algebras over $\mathbb{F}$. In \cite{b1} B. Hartwig and P. Terwilliger introduced the tetrahedron Lie algebra $\boxtimes$. In \cite[\rm Theorem~11.5]{b1} they showed that $\boxtimes$ is isomorphic to the three-point $\mathfrak{sl}_2$ loop algebra. In \cite{b2} B. Hartwig classified up to isomorphism the finite-dimensional irreducible $\boxtimes$-modules. For more information about $\boxtimes$ see \cite{b3,b4}. Fix $0\ne q\in \mathbb{F}$ such that $q$ is not a root of unity. In \cite{qtet} T. Ito and P. Terwilliger introduced a quantum analog $\boxtimes_q$ of $\boxtimes$ called the $q$-tetrahedron algebra. In \cite[\rm Section~10]{qtet} they classified up to isomorphism the finite-dimensional irreducible $\boxtimes_q$-modules. For more information about $\boxtimes_q$ see \cite{qtet2, qtet3, qtet4, qtet5, qtet6, miki}. We now mention a chain of algebra homomorphisms that involves $\boxtimes_q$. Consider the quantum loop algebra $U_q(L(\mathfrak{sl}_2))$ \cite[Section~3.3]{cp} and its positive part $U_q^+$ \cite[\rm Definition~1.1]{non}. By \cite[\rm Propositions~4.1, 4.3]{miki}, there exists a chain of algebra homomorphisms
\begin{equation}
\begin{tikzcd}
 U_q^+\arrow{r} & U_q(L(\mathfrak{sl}_2)) \arrow{r} & \boxtimes_q. \label{c1}
\end{tikzcd}
\end{equation}
In the above chain each map is injective. In \cite{tri} T. Ito, K. Tanabe and P. Terwilliger introduced the notion of a tridiagonal pair. In \cite{cla} T. Ito, K. Nomura and P. Terwilliger classified up to isomorphism the tridiagonal pairs over $\mathbb{F}$. In \cite[\rm Theorems~10.3, 10.4]{qtet} and \cite[\rm Theorem~2.7]{non}, the finite-dimensional irreducible $\boxtimes_q$-modules are linked to a family of tridiagonal pairs said to have $q$-geometric type. For more information about tridiagonal pairs see \cite{stru,dri,uqsl2hat,h1}. The existing literature contains a bijection between any two of the following sets:
\begin{enumerate}
\item[\rm(i)] the isomorphism classes of $q$-geometric tridiagonal pairs;
\item[\rm(ii)] the isomorphism classes of NonNil finite-dimensional irreducible $U_q^+$-modules of type $(1,1)$;
\item[\rm(iii)] the isomorphism classes of finite-dimensional irreducible $U_q(L(\mathfrak{sl}_2))$-modules $V$ of type $1$ such that $P_V(1)\ne 0$, where $P_V$ is the Drinfel'd polynomial of $V$;
\item[\rm(iv)] the isomorphism classes of finite-dimensional irreducible $\boxtimes_q$-modules of type $1$;
\item[\rm(v)] the polynomials in $\mathbb{F}[z]$ that have constant coefficient $1$ and do not vanish at $z=1$.
\end{enumerate}
The bijection between the sets (i), (ii) is established in \cite[\rm Lemma~4.8]{non}. The bijection between the sets (ii), (iii) is established in \cite[\rm Theorems~1.6, 1.7]{non} by using the map $U_q^+\to U_q(L(\mathfrak{sl}_2))$ in the chain (\ref {c1}). The bijection between the sets (ii), (iv) is established in \cite[\rm Theorems~10.3, 10.4]{qtet} by using the map $U_q^+\to \boxtimes_q$ in the chain (\ref {c1}). The bijection between the sets (iii), (v) is established in \cite[\rm p.~261]{cp}.

In \cite{p1} P. Terwilliger introduced a variation of $\boxtimes_q$ called $\square_q$. By \cite[\rm Propositions~4.1, 4.3]{miki} and \cite[\rm Proposition~5.5]{p1}, the chain (\ref {c1}) has an extension
\begin{equation}
\begin{tikzcd}
 U_q^+\arrow{r} & \square_q \arrow{r} & U_q(L(\mathfrak{sl}_2)) \arrow{r} & \boxtimes_q. \label{c2}
\end{tikzcd}
\end{equation}
In the above chain each map is injective.

In the present paper we focus on the finite-dimensional irreducible $\square_q$-modules. We have two main results. Our first main result is about how the isomorphism classes of finite-dimensional irreducible $\square_q$-modules are related to the sets (i)--(v). We identify a class of finite-dimensional irreducible $\square_q$-modules said to have type $1$. Using the chain (\ref {c2}) we show that the above sets (i)--(v) are in bijection with (vi) the isomorphism classes of finite-dimensional irreducible $\square_q$-modules of type $1$. Our second main result is about what happens when we twist a finite-dimensional irreducible $\square_q$-module of type $1$, via a certain automorphism $\rho$ of $\square_q$ that has order $4$.

We next describe our main results in more detail. We begin with some notation and basic concepts. Recall the ring of integers $\mathbb{Z}=\{0,\pm 1,\pm 2,\dots\}$. Let $\mathbb{Z}_4=\mathbb{Z}/4\mathbb{Z}$ denote the cyclic group of order $4$. Let $\mathcal{A}$ denote an $\mathbb{F}$-algebra and let $\varphi: \mathcal{A}\to \mathcal{A}$ denote an $\mathbb{F}$-algebra homomorphism. Let $V$ denote an $\mathcal{A}$-module. There exists an $\mathcal{A}$-module structure on $V$, called $V$ {\it twisted via} $\varphi$, that behaves as follows: for all $a\in \mathcal{A}$ and $v\in V$, the vector $av$ computed in $V$ twisted via $\varphi$ coincides with the vector $\varphi(a)v$ computed in the original $\mathcal{A}$-module $V$. Sometimes we abbreviate $^{\varphi}V$ for $V$ twisted via $\varphi$. Let $z$ denote an indeterminate and let $\mathbb{F}[z]$ denote the $\mathbb{F}$-algebra consisting of the polynomials in $z$ that have all coefficients in $\mathbb{F}$. We now recall the algebra $\square_q$.

\begin{1def}
\label{def:box}
(See \cite[Definition~5.1]{p1}.)
Let $\square_q$ denote the $\mathbb{F}$-algebra with generators $\{x_i\}_{i\in \mathbb{Z}_4}$ and relations
\begin{gather}
\frac{qx_ix_{i+1}-q^{-1}x_{i+1}x_i}{q-q^{-1}}=1, \label{equ:11}\\
x_i^3x_{i+2}-[3]_qx_i^2x_{i+2}x_i+[3]_qx_ix_{i+2}x_i^2-x_{i+2}x_i^3=0,\label{equ:12}
\end{gather}
where $[3]_q=(q^3-q^{-3})/(q-q^{-1})$.
\end{1def}

\begin{lem}
\label{lem:rho}
There exists an automorphism $\rho$ of $\square_q$ that sends $x_i\mapsto x_{i+1}$ for $i\in \mathbb{Z}_4$. Moreover $\rho^4=1$.
\end{lem}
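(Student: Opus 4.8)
The plan is to build $\rho$ via the universal property of an algebra defined by generators and relations, then verify order $4$ by a direct computation on generators. Since $\square_q$ is the quotient of the free unital associative $\mathbb{F}$-algebra on $\{x_i\}_{i\in\mathbb{Z}_4}$ by the two-sided ideal generated by the relations \eqref{equ:11} and \eqref{equ:12}, it suffices to exhibit an algebra homomorphism on the free algebra sending $x_i\mapsto x_{i+1}$ that kills this ideal, and hence descends to $\square_q$.

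First I would check that the assignment $x_i\mapsto x_{i+1}$ respects the defining relations. The crucial point is that both families \eqref{equ:11} and \eqref{equ:12} are indexed by $i\in\mathbb{Z}_4$ and are imposed for \emph{all} $i$. Applying the substitution $x_i\mapsto x_{i+1}$ (equivalently, incrementing every subscript by $1$ in $\mathbb{Z}_4$) carries the relation \eqref{equ:11} at index $i$ to the relation
\[
\frac{qx_{i+1}x_{i+2}-q^{-1}x_{i+2}x_{i+1}}{q-q^{-1}}=1,
\]
which is precisely \eqref{equ:11} at index $i+1$; likewise it carries \eqref{equ:12} at index $i$ to \eqref{equ:12} at index $i+1$, using that $(i+1)+2=(i+2)+1$ in $\mathbb{Z}_4$. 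Thus the substitution permutes the defining relations among themselves, so it maps the relation ideal into itself and induces a well-defined algebra endomorphism $\rho\colon\square_q\to\square_q$ with $\rho(x_i)=x_{i+1}$.

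Next I would establish $\rho^4=1$. Since $i+4=i$ in $\mathbb{Z}_4$, iterating the index shift four times gives $\rho^4(x_i)=x_{i+4}=x_i$ for every $i\in\mathbb{Z}_4$. As the $\{x_i\}_{i\in\mathbb{Z}_4}$ generate $\square_q$ and $\rho^4$ is an algebra homomorphism fixing each generator, it follows that $\rho^4$ is the identity on all of $\square_q$. In particular $\rho\circ\rho^3=\rho^3\circ\rho=1$, so $\rho$ is invertible with inverse $\rho^3$, and therefore $\rho$ is an automorphism of $\square_q$ of order dividing $4$.

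I do not expect any genuine obstacle here: the entire argument rests on the cyclic symmetry of the index set $\mathbb{Z}_4$ built into the presentation, so the relation-checking step is essentially immediate rather than a calculation. The only care needed is to phrase the verification at the level of the free algebra (so that "$\rho$ is well defined" is justified by the universal property) before passing to the quotient, and to note that fixing the generators suffices to conclude $\rho^4=1$ on the whole algebra.
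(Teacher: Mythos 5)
Your proof is correct and is exactly the argument the paper intends: its proof is simply ``By Definition \ref{def:box},'' i.e., the relations are permuted by the index shift $i\mapsto i+1$ in $\mathbb{Z}_4$, which is the symmetry you verify in detail via the universal property of the free algebra. Your write-up just makes explicit what the paper leaves implicit, including the standard observation that $\rho^4$ fixes the generators and hence equals the identity.
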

\begin{proof}
By Definition {\ref {def:box}}.
\end{proof}

\begin{lem}
\label{lem:gam}
For nonzero $\alpha\in\mathbb{F}$ there exists an automorphism of $\square_q$ that sends
\begin{gather*}
x_0\mapsto \alpha^{-1}x_0,\qquad x_1\mapsto \alpha x_1,\\
x_2\mapsto \alpha^{-1}x_2,\qquad x_3\mapsto \alpha x_3.
\end{gather*}
\end{lem}
\begin{proof}
By Definition {\ref {def:box}}.
\end{proof}

We have some comments about $\square_q$-modules. Let $V$ denote a finite-dimensional irreducible $\square_q$-module. We will show that each generator $x_i$ of $\square_q$ is semisimple on $V$. Moreover there exist an integer $d\ge 0$ and nonzero $\gamma\in\mathbb{F}$ with the following property:
\begin{enumerate}
\item[\rm(i)] for $x_0$ and $x_2$, the set of distinct eigenvalues on $V$ is $\{\gamma q^{d-2i}\}_{i=0}^{d}$;
\item[\rm(ii)] for $x_1$ and $x_3$, the set of distinct eigenvalues on $V$ is $\{\gamma^{-1}q^{d-2i}\}_{i=0}^{d}$.
\end{enumerate}
We call $d$ the {\it diameter} of $V$, and call $\gamma$ the {\it type} of $V$. Observe that the $\square_q$-module $^{\rho}V$ has diameter $d$ and type $\gamma^{-1}$. Next we twist the $\square_q$-module $V$ via the automorphism from Lemma {\ref {lem:gam}}. The resulting $\square_q$-module has diameter $d$ and type $\gamma\alpha^{-1}$. In particular, if $\alpha=\gamma$ then this $\square_q$-module has type $1$. Motivated by the above comments, we focus on the finite-dimensional irreducible $\square_q$-modules of type $1$.

Let $V$ denote a finite-dimensional irreducible $\square_q$-module of type $1$. We will associate to $V$ a polynomial $P_{V}\in\mathbb{F}[z]$ that has constant coefficient $1$. This $P_{V}$ depends on the isomorphism class of the $\square_q$-module $V$. We call $P_{V}$ the {\it Drinfel'd polynomial of $V$}. We will compare the Drinfel'd polynomials for $V$ and $^{\rho}V$.

The significance of $P_{V}$ is indicated in the following proposition, which is our first main result.

\begin{pro}
\label{thm:36}
The map $V\mapsto P_V$ induces a bijection between the following two sets:
\begin{enumerate}
\item[\rm(i)] the isomorphism classes of finite-dimensional irreducible $\square_q$-modules of type $1$;
\item[\rm(ii)] the polynomials in $\mathbb{F}[z]$ that have constant coefficient $1$ and do not vanish at $z=1$.
\end{enumerate}
\end{pro}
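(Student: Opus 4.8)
The plan is to deduce the assertion from the bijections already known to hold among the sets (i)--(v) of the Introduction, using the embeddings in the chain \eqref{c2}. Write $\iota\colon U_q^+ \to \square_q$ and $\jmath\colon \square_q \to U_q(L(\mathfrak{sl}_2))$ for the two relevant injective maps, so that $\jmath\circ\iota$ is the embedding of $U_q^+$ into $U_q(L(\mathfrak{sl}_2))$ used in \cite{non} to identify the sets (ii) and (iii). By construction $P_V$ is read off from the $U_q^+$-module underlying $V$ via the bijection between the sets (ii) and (v); consequently the map $V\mapsto P_V$ factors as
\begin{equation*}
S_\square \xrightarrow{\ r_1\ } S_+ \xrightarrow{\ \sim\ } S_P,
\end{equation*}
where $S_\square$ is the set in part (i), $S_+$ is the set (ii) of the Introduction, $S_P$ is the set in part (ii) of Proposition~\ref{thm:36} (which is the set (v) of the Introduction), the second arrow is the known bijection, and $r_1$ is induced by restriction along $\iota$. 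Thus it suffices to prove that $r_1$ is a bijection.

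First I would check that $r_1$ is well defined. Given $V\in S_\square$, the eigenvalue data recorded in items (i),(ii) preceding Proposition~\ref{thm:36}, specialized to type $\gamma=1$, together with the explicit form of $\iota$ coming from \cite{miki,p1}, should show that the restriction of $V$ along $\iota$ is a NonNil finite-dimensional irreducible $U_q^+$-module of type $(1,1)$; in particular the type-$1$ normalization of $V$ is exactly what produces type $(1,1)$ on the $U_q^+$-side. Symmetrically, restriction along $\jmath$ carries the set (iii) of the Introduction into $S_\square$, giving a map $r_2\colon S_L\to S_\square$, where $S_L$ is the set (iii). By functoriality of restriction, the composite $r_1\circ r_2$ equals restriction along $\jmath\circ\iota$, which is the bijection $S_L \xrightarrow{\sim} S_+$ of \cite[Theorems~1.6, 1.7]{non}. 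Consequently $r_2$ is injective and $r_1$ is surjective.

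It remains to show that $r_1$ is injective; this is the main obstacle. Concretely, one must prove that a finite-dimensional irreducible $\square_q$-module of type $1$ is determined up to isomorphism by its restriction to $U_q^+$, that is, that the action of all four generators $x_i$ is reconstructed from the action of the two generators lying in the image of $\iota$. The strategy is to decompose $V$ into $x_0$-eigenspaces using the semisimplicity asserted before Proposition~\ref{thm:36}, to recognize the $U_q^+$-action as a pair of raising/lowering operators on this grading, and then to use relations \eqref{equ:11} and \eqref{equ:12} to show that the remaining generators act in the unique way compatible with the given $U_q^+$-module structure and the prescribed eigenvalues. Granting injectivity, $r_1$ is a bijection; hence $r_2=r_1^{-1}\circ(r_1\circ r_2)$ is a bijection as well, and $V\mapsto P_V$, being the composite of the bijection $r_1$ with the known bijection $S_+\xrightarrow{\sim}S_P$, is a bijection, as desired. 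Besides injectivity, the point requiring genuine care is the preservation of irreducibility under $r_1$ and $r_2$, since an irreducible module over a larger algebra need not stay irreducible over a subalgebra; here one exploits that the images of $\iota$ and $\jmath$ are large enough to act irreducibly, in the spirit of the analogous treatment of $\boxtimes_q$ in \cite{qtet}.
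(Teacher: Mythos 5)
Your proposal follows essentially the same architecture as the paper: factor $V\mapsto P_V$ through restriction along $U_q^+\to\square_q$ (your $r_1$ is the paper's $\kappa^\sharp$), then invoke the known bijection between NonNil irreducible $U_q^+$-modules of type $(1,1)$ and polynomials, coming from \cite[Theorems~1.6,~1.7]{non} and \cite[p.~261]{cp} (Theorems \ref{thm:68} and \ref{thm:57}). One genuine merit of your version: deducing surjectivity of $r_1$ from $r_1\circ r_2=(\jmath\circ\iota)^\sharp$ being a bijection is cleaner than the paper's appeal to \cite[Theorem~10.4]{qtet} for the existence of a $\square_q$-structure extending a given $U_q^+$-structure, and well-definedness of $r_2$ is indeed cheap: any $\square_q$-submodule of $r_2(V)$ is a fortiori a $U_q^+$-submodule of the composite restriction, which is irreducible when $P_V(1)\neq 0$.

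However, the two steps carrying essentially all the technical content are left unproven, and you say as much. (a) Well-definedness of $r_1$: you must show that an irreducible type-$1$ $\square_q$-module stays irreducible when restricted to the subalgebra generated by $x_0,x_2$. This is Proposition \ref{pro:10.1}, whose proof occupies a full page and uses the four flags $[h]$, $h\in\mathbb{Z}_4$, and their oppositeness (Section~7); your appeal to the image of $\iota$ being ``large enough \dots in the spirit of \cite{qtet}'' is not an argument, since restriction of irreducibles to subalgebras fails generically and here the claim is a substantive theorem. (b) Injectivity of $r_1$: you must show the actions of $x_1,x_3$ are determined by those of $x_0,x_2$. Your sketch --- use \eqref{equ:11}, \eqref{equ:12} to pin down the remaining generators --- is too weak as stated: if $x_1$ and $x_1'$ both satisfy the relations with $x_0,x_2$, their difference $D$ is constrained only by $x_0D=q^{-2}Dx_0$ and $x_2D=q^{2}Dx_2$, i.e.\ $D$ is a weight-shifting map for the two gradings, and the defining relations alone do not force $D=0$. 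What actually forces uniqueness in the paper is the flag machinery: by Lemma \ref{lem:84}, the eigenspace sums of $x_0$ taken in both orders induce the flags $[0]$ and $[1]$, and those of $x_2$ induce $[2]$ and $[3]$ (Theorem \ref{thm:91}); Theorem \ref{thm:93} then reconstructs the eigenspace decomposition of $x_1$ (resp.\ $x_3$) as intersections of components of the flags $[1],[2]$ (resp.\ $[3],[0]$), which, combined with semisimplicity and the prescribed eigenvalues $q^{d-2n}$ from Lemma \ref{lem:88}, determines $x_1,x_3$ uniquely (Lemma \ref{thm:10.3}). Until you supply (a) and (b), or an equivalent of this machinery, your proposal is a correct reduction of Proposition \ref{thm:36} to the bijectivity of $r_1$, but not a proof.
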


Before stating our second main result, we have some comments.
\begin{1def}
\label{def:part}
Pick any $f\in\mathbb{F}[z]$ with constant coefficient $1$. Write
\begin{gather*}
f=a_0+a_1z+\dots+a_dz^d,\qquad a_0=1,\quad a_d\ne 0.
\end{gather*}
For $0\le i\le d$ replace $a_i$ by $a_{d-i}/a_d$ to get a polynomial
\begin{gather*}
f^\vee=\frac{a_d+a_{d-1}z+\dots+a_0z^d}{a_d}.
\end{gather*}
Observe that $f^{\vee}$ has constant coefficient $1$ and $(f^{\vee})^{\vee}=f$. We say that $f,f^{\vee}$ are {\it partners}. Note that
\begin{gather*}
f^{\vee}(z)=\frac{z^df(z^{-1})}{a_d}.
\end{gather*}
Let $z_1,z_2,\dots,z_d$ denote the roots of $f$. Then $z_1^{-1},z_2^{-1},\dots,z_d^{-1}$ are the roots of $f^{\vee}$.
\end{1def}

We now state our second main result.
\begin{thm}
\label{thm:37}
Let $V$ denote a finite-dimensional irreducible $\square_q$-module of type $1$. Then the Drinfel'd polynomials for $V$ and $^\rho V$ are partners.
\end{thm}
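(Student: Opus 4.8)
The plan is to reduce Theorem~\ref{thm:37} to a single statement \emph{internal} to the module $V$, and then to extract it from the defining relations. First recall how $P_V$ is built. Via the injection $U_q^+\to\square_q$ of the chain~(\ref{c2}), whose two $q$-Serre generators map to the opposite pair $x_0,x_2$ that satisfies~(\ref{equ:12}), the restriction of $V$ is a NonNil finite-dimensional irreducible $U_q^+$-module of type $(1,1)$; equivalently, $(x_0,x_2)$ is a $q$-geometric tridiagonal pair on $V$. Under the bijections recalled in the introduction, $P_V$ is the Drinfel'd polynomial of this $U_q^+$-module, so its coefficients are the spectral data of the pair $(x_0,x_2)$: the eigenvalue sequences of $x_0$ and $x_2$ are the geometric sequences $\{q^{d-2i}\}_{i=0}^{d}$, and the remaining information is carried by the split sequence of $(x_0,x_2)$ relative to its split decomposition.

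Next I would rewrite the data of $^\rho V$. Since $x_i$ acts on $^\rho V$ exactly as $x_{i+1}$ acts on $V$, the pair $(x_0,x_2)$ on $^\rho V$ \emph{is} the pair $(x_1,x_3)$ on $V$. As observed in the excerpt, $^\rho V$ again has type $1$, so $P_{^\rho V}$ is defined, and by the previous paragraph it is the Drinfel'd polynomial of the tridiagonal pair $(x_1,x_3)$ acting on $V$. Thus the theorem becomes the assertion that, on the \emph{same} space $V$, the defining scalar sequence of $(x_1,x_3)$ is the reverse of that of $(x_0,x_2)$, up to the scaling that restores constant coefficient $1$. By Definition~\ref{def:part} this reversal-and-rescaling is precisely the operation $f\mapsto f^\vee$, so proving it identifies $P_{^\rho V}$ with $(P_V)^\vee$ and yields the partner relation; in particular $f$ and $f^\vee$ have equal degree, consistent with $V$ and $^\rho V$ sharing the diameter $d$.

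The crux, and what I expect to be the main obstacle, is comparing the two tridiagonal pairs on $V$. Here I would use the equitable relations~(\ref{equ:11}) binding the adjacent generators $x_0,x_1$ and $x_2,x_3$, which let one pass between the $x_0$- and $x_1$-eigenspace flags and between the $x_2$- and $x_3$-eigenspace flags. The goal is to show that the split decomposition for $(x_1,x_3)$ is the split decomposition for $(x_0,x_2)$ traversed in the opposite order, so that a top (respectively bottom) vector for one pair is a bottom (respectively top) vector for the other; the rotation $\rho$ interchanges the increasing and decreasing flags, which is what drives the reversal. Granting this, I would compute the resulting split sequence by inducting on the diameter $d$, with the recursion supplied by~(\ref{equ:11}) and the $q$-Serre relations~(\ref{equ:12}). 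The delicate part is controlling the scalars these relations produce so that, after normalization, the sequence is \emph{exactly} reversed; packaging the split sequence into a generating function and checking that passing from $(x_0,x_2)$ to $(x_1,x_3)$ implements the substitution $z\mapsto z^{-1}$ together with multiplication by the leading coefficient is, I expect, the cleanest way to make this bookkeeping transparent.
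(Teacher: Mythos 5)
Your reduction is sound as far as it goes: twisting by $\rho$ does turn the pair $x_0,x_2$ acting on $^{\rho}V$ into the pair $x_1,x_3$ acting on $V$, and comparing these two tridiagonal pairs on the same space is exactly how the paper proceeds. But the core of your argument is missing, and the plan you give for it would not work. The step you call the crux — that the split sequence of $x_1,x_3$ is the exact reversal of that of $x_0,x_2$, to be proven ``by inducting on the diameter $d$, with the recursion supplied by~(\ref{equ:11}) and~(\ref{equ:12})'' — has no visible induction to run: an irreducible $\square_q$-module of diameter $d$ neither contains nor maps onto one of smaller diameter in any way the defining relations provide, so there is no recursion. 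The paper avoids this entirely. It computes both tridiagonal-pair Drinfel'd polynomials in closed form against the Chari--Pressley data: expressing $e_0^{\pm},e_1^{\pm}$ through the equitable generators, it shows the split sequence of $x_1,x_3$ equals $(q-q^{-1})^{2i}\sigma_i$ (Lemma~\ref{lem:13.7}, giving $P_{x_1,x_3}(z)=z^dP_V(z^{-1})$) and the split sequence of $x_0,x_2$ equals $(q-q^{-1})^{2i}\mu_i$ (Lemma~\ref{lem:13.8}, giving $P_{x_0,x_2}(z)=z^dQ_V(z^{-1})$), and then quotes two external results: $P_{A,A^*}=P_{A^*,A}$ from \cite{dri} (Lemma~\ref{star}) and the Chari--Pressley fact that $P_V$ and $Q_V$ are partners (Lemma~\ref{lem:54}). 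The ``delicate bookkeeping'' you hope to tame with a generating-function induction is precisely what these two citations replace; without them you would in effect be reproving substantial parts of \cite{dri} and \cite{cp} from scratch.

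There is also a directional error in your setup. You assert that the coefficients of $P_V$ are the spectral data of the pair $x_0,x_2$. The direct computation says otherwise: $P_V$ is built from the raising operators $e_0^+,e_1^+$, which in the equitable picture are expressed through $X_{30}=\psi(x_3)$ and $X_{12}=\psi(x_1)$, so $P_V$ is carried by the pair $x_1,x_3$; it is $Q_V$, built from $e_0^-,e_1^-$ and hence from $X_{01}=\psi(x_0)$, $X_{23}=\psi(x_2)$, that is carried by $x_0,x_2$. Your identification is true only modulo the partner relation $Q_V=(P_V)^{\vee}$ of Lemma~\ref{lem:54} — an ingredient you never invoke, and without which your chain of identifications does not close. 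Finally, the tridiagonal-pair machinery as set up here requires $d\ge 2$: the cases $d=0$ and $d=1$ need separate treatment (for $d=1$ the paper compares evaluation parameters of $V$ and $^{\rho}V$ using the traces~(\ref{tr1}),~(\ref{tr2})), and your proposal does not address them.
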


\begin{cor}
\label{cor:38}
Let $V$ denote a finite-dimensional irreducible $\square_q$-module of type $1$. Then the $\square_q$-modules $V$ and $^{\rho^2} V$ are isomorphic.
\end{cor}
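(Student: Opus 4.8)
The plan is to translate the statement into an equality of Drinfel'd polynomials and then appeal to the bijection of Proposition~\ref{thm:36}. Concretely, I would show that $P_{{}^{\rho^2}V}=P_V$; since both ${}^{\rho^2}V$ and $V$ have type $1$ and their Drinfel'd polynomials then coincide, the injectivity half of the bijection in Proposition~\ref{thm:36} forces $V\cong {}^{\rho^2}V$.

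To set this up, two preliminary points need checking. First, Theorem~\ref{thm:37} should be applicable not only to $V$ but also to ${}^{\rho}V$, which requires ${}^{\rho}V$ to have type $1$. By the comments preceding Proposition~\ref{thm:36}, twisting a module of type $\gamma$ via $\rho$ yields a module of type $\gamma^{-1}$. As $V$ has type $1$ and $1^{-1}=1$, the module ${}^{\rho}V$ again has type $1$, and repeating the argument so does ${}^{\rho}({}^{\rho}V)$; hence all three polynomials $P_V$, $P_{{}^{\rho}V}$, $P_{{}^{\rho}({}^{\rho}V)}$ are defined. Second, I would identify the iterated twist with a single twist: unwinding the definition of twisting, for $a\in\square_q$ and $v\in V$ the action of $a$ on ${}^{\rho}({}^{\rho}V)$ sends $v$ to $\rho(\rho(a))v=\rho^2(a)v$, which is exactly the action defining ${}^{\rho^2}V$. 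Thus ${}^{\rho}({}^{\rho}V)={}^{\rho^2}V$.

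With these in hand, I would apply Theorem~\ref{thm:37} twice. Applied to $V$ it gives that $P_V$ and $P_{{}^{\rho}V}$ are partners; applied to ${}^{\rho}V$ (legitimate by the first point above) it gives that $P_{{}^{\rho}V}$ and $P_{{}^{\rho}({}^{\rho}V)}=P_{{}^{\rho^2}V}$ are partners. Writing $f^\vee$ for the partner of $f$ as in Definition~\ref{def:part}, this reads $P_{{}^{\rho^2}V}=(P_{{}^{\rho}V})^\vee=\big((P_V)^\vee\big)^\vee$. Since $(f^\vee)^\vee=f$ by Definition~\ref{def:part}, I obtain $P_{{}^{\rho^2}V}=P_V$, and Proposition~\ref{thm:36} completes the proof. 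I do not anticipate a genuine obstacle here: all the substance is already packaged in Theorem~\ref{thm:37} and Proposition~\ref{thm:36}, and the only care required is the bookkeeping that ${}^{\rho}V$ is again of type $1$ (so that Theorem~\ref{thm:37} may be reused) together with the verification that two successive $\rho$-twists compose to the single $\rho^2$-twist.
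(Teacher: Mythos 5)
Your proposal is correct and follows the same route as the paper, which proves the corollary by applying Theorem~\ref{thm:37} twice and invoking the bijection of Proposition~\ref{thm:36}; your additional bookkeeping (that ${}^{\rho}V$ again has type $1$, and that two $\rho$-twists compose to the $\rho^2$-twist) simply makes explicit what the paper leaves implicit.
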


We now mention the idea behind our proof of Proposition {\ref {thm:36}}. Using the map $U_q^+\to \square_q$ in the chain (\ref {c2}), we will establish a bijection between the isomorphism classes of finite-dimensional irreducible $\square_q$-modules of type $1$ and the isomorphism classes of NonNil finite-dimensional irreducible $U_q^+$-modules of type $(1,1)$. In {\cite{non}}, the NonNil finite-dimensional irreducible $U_q^+$-modules of type $(1,1)$ are classified using the Drinfel'd polynomial of $U_q(L(\mathfrak{sl}_2))$. Using these facts we establish the bijection in Proposition {\ref {thm:36}}.

We now mention the idea behind our proof of Theorem {\ref {thm:37}}. Let $V$ denote a finite-dimensional irreducible $\square_q$-module of type $1$. We will show that for $i\in\mathbb{Z}_4$ the $\square_q$ generators $x_i,x_{i+2}$ act on $V$ as a $q$-geometric tridiagonal pair (see \cite[\rm Definition~2.6]{non}). Using the Drinfel'd polynomials for these tridiagonal pairs (see \cite{dri}) together with some results from \cite{cp}, we prove Theorem {\ref {thm:37}}.

Given the earlier literature, Theorem {\ref {thm:37}} and Corollary {\ref {cor:38}} should not be surprising. Concerning Theorem {\ref {thm:37}} we mention one item \cite[\rm Lemma~9.11]{qtet6}. This is about the algebra $\boxtimes_q$. Let $V$ denote an evaluation module for $\boxtimes_q$ with evaluation parameter $t$ (see \cite[\rm Section~9]{qtet6}). Let $\rho$ denote the automorphism of $\boxtimes_q$ from \cite[\rm Lemma~6.3]{qtet}. It was shown in \cite[\rm Lemma~9.11]{qtet6} that the $\boxtimes_q$-module $^{\rho}V$ is an evaluation module with evaluation parameter $t^{-1}$. Concerning Corollary {\ref {cor:38}} we mention one item \cite[\rm Theorem~2.5]{h1}. This is about $q$-geometric tridiagonal pairs. Let $V$ denote a vector space over $\mathbb{F}$ with finite positive dimension, and let $A,A^*$ denote a $q$-geometric tridiagonal pair on $V$. It was shown in \cite[\rm Theorem~2.5]{h1} that the tridiagonal pairs $A,A^*$ and $A^*,A$ are isomorphic.

The paper is organized as follows. Section 2 contains the preliminaries. Section 3 contains some basic facts about $U_q(L(\mathfrak{sl}_2))$. Section 4 contains some basic facts about Drinfel'd polynomials for $U_q(L(\mathfrak{sl}_2))$-modules. Section 5 contains some basic facts about $U_q^+$. In Sections 6--8 we describe the finite-dimensional irreducible $\square_q$-modules and prove Proposition {\ref {thm:36}}. Section 9 contains some basic facts about tridiagonal pairs and their Drinfel'd polynomial. In Section 10 we prove Theorem {\ref {thm:37}}. In Section 11 we obtain an analogous result for $\boxtimes_q$.

\section{Preliminaries}
We now begin our formal argument. Throughout this paper, every algebra without the Lie prefix is meant to be associative and have a $1$. Let $\mathcal{A}$ denote an $\mathbb{F}$-algebra. By an {\it automorphism} of $\mathcal{A}$ we mean an $\mathbb{F}$-algebra isomorphism from $\mathcal{A}$ to $\mathcal{A}$. Let $\mathcal{B}$ denote an $\mathbb{F}$-algebra and $\varphi: \mathcal{A}\to \mathcal{B}$ denote an $\mathbb{F}$-algebra homomorphism. For a $\mathcal{B}$-module $V$, we pull back the $\mathcal{B}$-module structure on $V$ via $\varphi$ and turn $V$ into an $\mathcal{A}$-module. We call the $\mathcal{A}$-module $V$ the {\it pullback} of the $\mathcal{B}$-module $V$ via $\varphi$. If $\varphi$ is surjective, then the $\mathcal{B}$-module $V$ is irreducible if and only if the $\mathcal{A}$-module $V$ is irreducible. We mention a special case in which $\mathcal{A}=\mathcal{B}$. In this case the following are the same: (i) the pullback of the $\mathcal{B}$-module $V$ via $\varphi$; (ii) the $\mathcal{B}$-module $V$ twisted via $\varphi$.

Recall the set of natural numbers $\mathbb{N}=\{0,1,2,\dots\}$. For the duration of this paragraph fix $d\in\mathbb{N}$. Let $V$ denote a vector space over $\mathbb{F}$ with dimension $d+1$. Let ${\rm End}(V)$ denote the $\mathbb{F}$-algebra consisting of the $\mathbb{F}$-linear maps from $V$ to $V$. Let $I\in {\rm End}(V)$ denote the identity map. Let $L\in {\rm End}(V)$. The map $L$ is said to be {\it nilpotent} whenever there exists a positive integer $n$ such that $L^n=0$. For $\theta\in\mathbb{F}$ define
\begin{gather*}
V_L(\theta)=\{v\in V\mid Lv=\theta v\}.
\end{gather*}
Observe that $\theta$ is an eigenvalue of $L$ if and only if $V_L(\theta)\ne 0$; in this case $V_L(\theta)$ is the corresponding eigenspace. We say that $L$ is {\it semisimple} whenever $V$ is spanned by the eigenspaces of $L$. Let ${\rm{Mat}}_{d+1}(\mathbb{F})$ denote the $\mathbb{F}$-algebra consisting of the $d+1$ by $d+1$ matrices that have all entries in $\mathbb{F}$. We index the rows and columns by $0,1,\dots,d$. Let $\{v_i\}_{i=0}^d$ denote a basis for $V$. For $A\in {\rm End}(V)$ and $M\in{\rm{Mat}}_{d+1}(\mathbb{F})$, we say that {\it $M$ represents $A$ with respect to $\{v_i\}_{i=0}^d$} whenever $Av_j=\sum_{i=0}^d M_{ij}v_i$ for $0\le j\le d$. For $n\in \mathbb{Z}$ define
\begin{equation*}
[n]_q = \frac{q^n-q^{-n}}{q-q^{-1}}.
\end{equation*}
For $n\in \mathbb{N}$ define
\begin{equation*}
[n]_q^! = \prod_{i=1}^{n}[i]_q.
\end{equation*}
We interpret $[0]_q^!=1$.

\section{The quantum loop algebra $U_q(L(\mathfrak{sl}_2))$}
In this section, we recall the quantum loop algebra $U_q(L(\mathfrak{sl}_2))$.
\begin{1def}
\label{def:lp1}
(See \cite[Section~3.3]{cp}.)
Let $U_q(L(\mathfrak{sl}_2))$ denote the $\mathbb{F}$-algebra with generators $e_i^{\pm}$, $K_{i}^{\pm 1}$, $i\in \{0,1\}$ and relations
\begin{gather*}
K_iK_i^{-1}=K_i^{-1}K_i=1,\\
K_0K_1=K_1K_0=1,\\
K_ie_i^{\pm}K_i^{-1}=q^{\pm 2}e_i^{\pm},\\
K_ie_j^{\pm}K_i^{-1}=q^{\mp 2}e_j^{\pm}, \qquad i\ne j,\\
e_i^{+}e_i^{-}-e_i^{-}e_i^{+}=\frac{K_i-K_i^{-1}}{q-q^{-1}},\\
e_0^{\pm}e_1^{\mp}=e_1^{\mp}e_0^{\pm},\\
(e_i^{\pm})^3e_j^{\pm}-[3]_q(e_i^{\pm})^2e_j^{\pm}e_i^{\pm}+[3]_qe_i^{\pm}e_j^{\pm}(e_i^{\pm})^2-e_j^{\pm}(e_i^{\pm})^3=0, \qquad i\ne j.
\end{gather*}
We call $e_i^{\pm}$, $K_{i}^{\pm 1}$, $i\in \{0,1\}$ the {\it Chevalley generators} for $U_q(L(\mathfrak{sl}_2))$.
\end{1def}

In a moment we will discuss some objects $X_{ij}$. The subscripts
$i,j$ are meant to be in $\mathbb{Z}_4$. We now recall the equitable presentation for $U_q(L(\mathfrak{sl}_2))$.
\begin{lem}
\label{lem:lp2}
\rm (See \cite[\rm Theorem~2.1]{uqsl2hat},
\cite[\rm Proposition~4.2]{miki}.)
\it
The $\mathbb{F}$-algebra $U_q(L(\mathfrak{sl}_2))$ is isomorphic to the $\mathbb{F}$-algebra with generators
\begin{gather}
X_{01}, \quad X_{12},\quad X_{23},\quad X_{30}, \quad X_{13},\quad X_{31} \label{41}
\end{gather}
and the following relations:
\begin{gather*}
X_{13}X_{31}=X_{31}X_{13}=1,\\
\frac{qX_{01}X_{13}-q^{-1}X_{13}X_{01}}{q-q^{-1}}=1,\quad\qquad \frac{qX_{13}X_{30}-q^{-1}X_{30}X_{13}}{q-q^{-1}}=1,\\
\frac{qX_{23}X_{31}-q^{-1}X_{31}X_{23}}{q-q^{-1}}=1,\quad\qquad \frac{qX_{31}X_{12}-q^{-1}X_{12}X_{31}}{q-q^{-1}}=1;
\end{gather*}
\begin{gather*}
\frac{qX_{i,i+1}X_{i+1,i+2}-q^{-1}X_{i+1,i+2}X_{i,i+1}}{q-q^{-1}}=1\qquad (i\in \mathbb{Z}_4),\\
X_{i,i+1}^3X_{i+2,i+3}-[3]_qX_{i,i+1}^2X_{i+2,i+3}X_{i,i+1}+[3]_qX_{i,i+1}X_{i+2,i+3}X_{i,i+1}^2\\
-X_{i+2,i+3}X_{i,i+1}^3=0\qquad (i\in \mathbb{Z}_4).
\end{gather*}
An isomorphism sends
\begin{gather}
X_{01}\mapsto K_0+q(q-q^{-1})K_0e^{-}_0, \qquad X_{23}\mapsto K_1+q(q-q^{-1})K_1e^{-}_1, \label{42}\\
X_{12}\mapsto K_1-(q-q^{-1})e^{+}_1, \qquad X_{30}\mapsto K_0-(q-q^{-1})e^{+}_0, \label{43}\\
X_{13}\mapsto K_1,\qquad X_{31}\mapsto K_0. \label{44}
\end{gather}
The inverse isomorphism sends
\begin{gather*}
e_0^{-}\mapsto q^{-1}(q-q^{-1})^{-1}(X_{13}X_{01}-1),\qquad e_1^{-}\mapsto q^{-1}(q-q^{-1})^{-1}(X_{31}X_{23}-1),\\
e_1^{+}\mapsto (q-q^{-1})^{-1}(X_{13}-X_{12}),\qquad e_0^{+}\mapsto (q-q^{-1})^{-1}(X_{31}-X_{30}),\\
K_1\mapsto X_{13},\qquad K_0\mapsto X_{31}.
\end{gather*}
\end{lem}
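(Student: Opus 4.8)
The plan is to construct mutually inverse $\mathbb{F}$-algebra homomorphisms realizing the two presentations; the bulk of the work is checking that each map respects the defining relations. Let $\mathcal{U}$ denote the $\mathbb{F}$-algebra presented by the generators \eqref{41} together with the displayed equitable relations. First I would define $\psi\colon \mathcal{U}\to U_q(L(\mathfrak{sl}_2))$ by sending each $X_{ij}$ to the element specified in \eqref{42}--\eqref{44}, and verify that $\psi$ is well defined by confirming that these images satisfy every relation of $\mathcal{U}$. Conversely, I would define $\phi\colon U_q(L(\mathfrak{sl}_2))\to \mathcal{U}$ by the inverse formulas for the Chevalley generators $e_i^\pm, K_i^{\pm1}$, and verify that $\phi$ respects the relations of Definition \ref{def:lp1}. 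Finally I would check that $\psi$ and $\phi$ are mutually inverse by evaluating the compositions on generators.

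For the well-definedness of $\psi$, most relations reduce quickly after substitution. The relation $X_{13}X_{31}=X_{31}X_{13}=1$ is immediate from $K_0K_1=K_1K_0=1$. Each $q$-commutator relation, such as $\frac{qX_{01}X_{13}-q^{-1}X_{13}X_{01}}{q-q^{-1}}=1$ or $\frac{qX_{i,i+1}X_{i+1,i+2}-q^{-1}X_{i+1,i+2}X_{i,i+1}}{q-q^{-1}}=1$, collapses after inserting \eqref{42}--\eqref{44} to a consequence of the commutation rules $K_ie_j^\pm K_i^{-1}=q^{\mp2}e_j^\pm$ and the relation $e_i^+e_i^- - e_i^-e_i^+=(K_i-K_i^{-1})/(q-q^{-1})$; the mixed relation $e_0^\pm e_1^\mp=e_1^\mp e_0^\pm$ is also needed when the two edges involved lie on opposite sides of the square. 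These are direct, if tedious, normal-ordering computations.

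For $\phi$, the substitutions $K_1\mapsto X_{13}$ and $K_0\mapsto X_{31}$ make the $K$-relations transparent, and the remaining Chevalley relations among $e_i^\pm$ and $K_i$ again follow from the equitable $q$-commutator relations by rearrangement. For the mutual-inverse check, one computes for instance $\phi(\psi(X_{13}))=\phi(K_1)=X_{13}$, while recovering $X_{01}$ from its image uses the formula $e_0^-\mapsto q^{-1}(q-q^{-1})^{-1}(X_{13}X_{01}-1)$ to see that $\psi(\phi(e_0^-))$ returns $e_0^-$; the other generators are handled the same way.

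The main obstacle will be the two cubic relations. On the $\mathcal{U}$ side these are the $q$-Serre relations $X_{i,i+1}^3X_{i+2,i+3}-[3]_qX_{i,i+1}^2X_{i+2,i+3}X_{i,i+1}+\cdots-X_{i+2,i+3}X_{i,i+1}^3=0$, and on the Chevalley side they are the corresponding relations for $e_i^\pm$. Unlike the quadratic relations, these are genuine degree-three identities that do not reduce to simple commutation rules, so verifying that $\psi$ and $\phi$ preserve them requires a careful expansion in the respective generators followed by repeated normal-ordering. I would organize this by first recording the lower-degree identities already established (the $q$-commutator relations) and then checking the cubic identity term by term; alternatively one may simply invoke the cited results \cite[Theorem~2.1]{uqsl2hat} and \cite[Proposition~4.2]{miki}, which establish precisely this isomorphism.
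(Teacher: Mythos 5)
The paper offers no proof of this lemma at all: it is a recalled result, and the paper's entire justification is the pair of citations \cite[Theorem~2.1]{uqsl2hat} and \cite[Proposition~4.2]{miki}. So the fallback you mention in your last sentence --- simply invoking those references --- is exactly what the paper does, and to that extent your proposal and the paper coincide. Your main branch, the direct verification, is the standard strategy and is essentially what the cited references carry out: define $\psi$ on the generators \eqref{41} by \eqref{42}--\eqref{44}, define $\phi$ on the Chevalley generators by the inverse formulas, check well-definedness of both, and then check the compositions on generators. Your claims about the quadratic relations are accurate; for instance, $q\psi(X_{01})\psi(X_{13})-q^{-1}\psi(X_{13})\psi(X_{01})$ collapses to $(q-q^{-1})1$ using only $K_0K_1=1$ and $K_1e_0^-K_1^{-1}=q^{2}e_0^-$, and the composition check $\phi(\psi(X_{01}))=X_{31}+X_{31}(X_{13}X_{01}-1)=X_{01}$ uses only $X_{31}X_{13}=1$.

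However, viewed as a standalone proof rather than as a pointer to the literature, your write-up has a genuine gap, which you yourself flag: the cubic $q$-Serre verifications are never performed, and they are essentially the entire content of the theorem. The issue is sharpest for $\phi$: showing that the elements $(q-q^{-1})^{-1}(X_{31}-X_{30})$ and $(q-q^{-1})^{-1}(X_{13}-X_{12})$ satisfy the $q$-Serre relations inside the equitably presented algebra does not follow by ``rearrangement''; one must expand degree-four products of noncommuting generators and combine the quadratic and cubic equitable relations in a way that is not term-by-term bookkeeping (the cubic equitable relations involve the pairs $X_{i,i+1},X_{i+2,i+3}$, not the differences appearing in $\phi(e_i^{\pm})$, so additional consequences of the relations must first be derived). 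The same applies, less severely, to $\psi$ preserving the cubic relations among the images \eqref{42}, \eqref{43}. Until those computations are supplied, your argument is an outline of the proof in \cite{uqsl2hat,miki} rather than a proof; with the citation substituted for them, it agrees with the paper.
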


\begin{note}
For notational convenience, we identify the copy of $U_q(L(\mathfrak{sl}_2))$ given in Definition {\ref {def:lp1}} with the copy given in Lemma {\ref {lem:lp2}}, via the isomorphism given in Lemma {\ref {lem:lp2}}.
\end{note}

\begin{1def}
We call the elements ({\ref {41}}) the {\it equitable generators} for $U_q(L(\mathfrak{sl}_2))$.
\end{1def}

Comparing the relations in Definition {\ref {def:box}} with the relations in Lemma {\ref {lem:lp2}}, we obtain an $\mathbb{F}$-algebra homomorphism $\psi: \square_q \to U_q(L(\mathfrak{sl}_2))$ that sends $x_i\mapsto X_{i,i+1}$ for $i\in\mathbb{Z}_4$.
\begin{lem}
\rm
(See \cite[Proposition~5.5]{p1}, \cite[Propositions~4.1, 4.3]{miki}.)
\label{lem:1}
\it
The above homomorphism $\psi$ is injective.
\end{lem}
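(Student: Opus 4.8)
The plan is to exhibit $\psi$ as the first factor of a composite that is already known to be injective, and then invoke the elementary fact that if $g\circ f$ is injective then $f$ is injective. The natural second factor is the map $\iota:U_q(L(\mathfrak{sl}_2))\to\boxtimes_q$ from the chain (\ref{c1}), supplied by \cite[Propositions~4.1, 4.3]{miki}, and the natural target of the composite is the homomorphism $\mu:\square_q\to\boxtimes_q$ furnished by \cite[Proposition~5.5]{p1}. The whole point of citing both sources together is that $\mu$ should factor as $\iota\circ\psi$, so that injectivity of $\mu$ forces injectivity of $\psi$.

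Concretely, first I would record the three maps on generators: $\psi$ sends $x_i\mapsto X_{i,i+1}$ for $i\in\mathbb{Z}_4$; Miki's map $\iota$ sends each equitable generator $X_{i,i+1}$ to a specified generator of $\boxtimes_q$; and Terwilliger's map $\mu$ sends each $x_i$ to the corresponding generator of $\boxtimes_q$. Next I would verify the factorization $\mu=\iota\circ\psi$. Since all three maps are $\mathbb{F}$-algebra homomorphisms and $\square_q$ is generated by $\{x_i\}_{i\in\mathbb{Z}_4}$, it suffices to check $\mu(x_i)=\iota(\psi(x_i))=\iota(X_{i,i+1})$ for each $i\in\mathbb{Z}_4$, which reduces to matching the image of $X_{i,i+1}$ under $\iota$ against the image of $x_i$ under $\mu$, one generator at a time. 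Finally, since \cite[Proposition~5.5]{p1} asserts that $\mu$ is injective, the composite $\iota\circ\psi$ is injective, and hence $\psi$ is injective. Note that the injectivity of $\iota$ itself is not logically needed for the conclusion; the Miki references serve only to produce $\iota$ and describe its action on generators, so that the factorization can be checked.

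I expect the main obstacle to be the bookkeeping in the factorization step: one must pin down the precise images in $\boxtimes_q$ of the equitable generators $X_{i,i+1}$ under Miki's map and confirm that they agree with Terwilliger's assignment for the $x_i$. This is a finite verification on six generators, but it requires carefully aligning the indexing and normalization conventions of \cite{miki} and \cite{p1}. A more self-contained alternative would be to build a PBW-type basis for $\square_q$ and show that its image under $\psi$ is linearly independent in $U_q(L(\mathfrak{sl}_2))$; this avoids the external citations but is considerably more laborious, so I would not pursue it unless the factorization check turned out to be intractable.
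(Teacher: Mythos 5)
Your proposal is correct and matches the paper's intended route: the paper supplies no independent argument for this lemma, citing exactly the two facts you invoke --- the injection $\square_q\to\boxtimes_q$, $x_i\mapsto x_{i,i+1}$, from \cite[Proposition~5.5]{p1}, and the homomorphism $\eta\colon U_q(L(\mathfrak{sl}_2))\to\boxtimes_q$, $X_{i,i+1}\mapsto x_{i,i+1}$, coming from \cite[Propositions~4.1, 4.3]{miki} (in this paper $\eta$ is written down explicitly just before Lemma~\ref{lem:3}) --- and these combine precisely via your factorization $\mu=\eta\circ\psi$ together with the elementary fact that the first factor of an injective composite is injective. Your side remarks are also accurate: only the existence of $\eta$, not its injectivity, enters the argument, and since $\psi$ and $\eta$ are both defined by explicit generator assignments within this paper, the factorization check on the four generators is immediate and the convention-matching worry does not arise.
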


We next recall some facts about finite-dimensional irreducible $U_q(L(\mathfrak{sl}_2))$-modules.

We begin with some notation. Let $V$ denote a vector space over $\mathbb{F}$ with finite positive dimension. Let $\{s_i\}_{i=0}^{d}$ denote a finite sequence of positive integers whose sum is the dimension of $V$. By a {\it decomposition} of $V$ of {\it shape} $\{s_i\}_{i=0}^{d}$ we mean a sequence $\{U_i\}_{i=0}^{d}$ of subspaces of $V$ such that $U_i$ has dimension $s_i$ for $0\le i\le d$, and the sum $V=\sum_{i=0}^d U_i$ is direct. We call $d$ the {\it diameter} of the decomposition. For $0\le i\le d$ we call $U_i$ the $i$th {\it component} of the decomposition. For notational convenience define $U_{-1}=0$ and $U_{d+1}=0$. By the {\it inversion} of $\{U_i\}_{i=0}^{d}$ we mean the decomposition $\{U_{d-i}\}_{i=0}^{d}$.
\begin{lem}
\label{lem:dec}
\rm
(See \cite[\rm Proposition~3.2]{cp}.)
\it
Let $V$ denote a finite-dimensional irreducible $U_q(L(\mathfrak{sl}_2))$-module. Then there exist a unique scalar $\gamma\in\{1,-1\}$ and a unique decomposition $\{U_i\}_{i=0}^{d}$ of $V$ such that
\begin{gather*}
(K_0-\gamma q^{2i-d}I)U_i=0,\qquad (K_1-\gamma q^{d-2i}I)U_i=0
\end{gather*}
for $0\le i\le d$. Moreover, for $0\le i\le d$ we have
\begin{gather*}
e_0^{+}U_i\subseteq U_{i+1},\qquad e_1^{-}U_i\subseteq U_{i+1},\\
e_0^{-}U_i\subseteq U_{i-1},\qquad e_1^{+}U_i\subseteq U_{i-1}.
\end{gather*}
\end{lem}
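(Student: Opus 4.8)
The final statement to prove is Lemma~\ref{lem:dec}, which asserts the existence and uniqueness of a scalar $\gamma\in\{1,-1\}$ and a decomposition $\{U_i\}_{i=0}^d$ diagonalizing $K_0,K_1$ with prescribed eigenvalues, together with the raising/lowering behavior of the Chevalley generators $e_0^\pm,e_1^\pm$.

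\medskip
The plan is to build the decomposition from the joint eigenspace structure of the mutually inverse commuting operators $K_0,K_1$. Since $K_0K_1=1$, the operators $K_0$ and $K_1$ are inverse to each other, so it suffices to understand $K_1$ (say). First I would argue that $K_1$ is semisimple on $V$ with eigenvalues that form a $q$-string. The standard mechanism is the $\mathfrak{sl}_2$-type commutation relations $K_1 e_1^\pm K_1^{-1}=q^{\pm2}e_1^\pm$ and $K_1 e_0^\pm K_1^{-1}=q^{\mp 2}e_0^\pm$: these show that $e_1^+,e_0^-$ raise a $K_1$-eigenvalue by a factor $q^2$, while $e_1^-,e_0^+$ lower it by $q^2$. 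I would let $\theta$ be an eigenvalue of $K_1$ of maximal ``weight'' (chosen so that applying the raising operators annihilates the corresponding eigenvector), and then generate $V$ by repeatedly applying lowering operators; irreducibility forces $V$ to be exactly the span of these images. This yields that the distinct eigenvalues of $K_1$ are $\{\theta q^{-2i}\}_{i=0}^d$ for some $d\ge 0$, and setting $U_i$ to be the $K_1$-eigenspace for eigenvalue $\theta q^{-2i}$ gives the desired decomposition.

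\medskip
Next I would pin down the scalar $\gamma$. The product of the top and bottom weights, together with the constraint that the eigenvalues of $K_1$ and $K_0=K_1^{-1}$ must be consistent across the string of length $d+1$, forces the top $K_1$-eigenvalue to have the form $\gamma q^d$ with $\gamma^2=1$; since $\mathbb{F}$ is algebraically closed of characteristic $0$ and $q$ is not a root of unity, the only ambiguity in the ``type'' is the sign $\gamma\in\{1,-1\}$. Writing $\theta=\gamma q^{d}$ then gives the $K_1$-eigenvalue on $U_i$ as $\gamma q^{d-2i}$ and correspondingly the $K_0$-eigenvalue as $\gamma q^{2i-d}$, matching the stated relations $(K_0-\gamma q^{2i-d}I)U_i=0$ and $(K_1-\gamma q^{d-2i}I)U_i=0$. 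Uniqueness of both $\gamma$ and the decomposition follows because the $U_i$ are intrinsically the eigenspaces of the operator $K_1$, which is canonically determined, and the eigenvalue labelling fixes $\gamma$.

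\medskip
Finally, the raising/lowering containments $e_0^+U_i\subseteq U_{i+1}$, $e_1^-U_i\subseteq U_{i+1}$, $e_0^-U_i\subseteq U_{i-1}$, $e_1^+U_i\subseteq U_{i-1}$ are immediate consequences of the commutation relations already used: if $v\in U_i$ so that $K_1v=\gamma q^{d-2i}v$, then $K_1(e_0^+v)=q^{2}e_0^+K_1 v=\gamma q^{d-2(i-1)}(e_0^+v)$, placing $e_0^+v$ in $U_{i-1}$, and analogously for the others, with the convention $U_{-1}=U_{d+1}=0$ absorbing the boundary cases. I expect the main obstacle to be the first step: rigorously establishing that $K_1$ is semisimple with eigenvalues forming an unbroken $q$-string of the stated length, and in particular showing there are no ``gaps'' in the string. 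This requires the highest-weight-style argument driven by irreducibility, and care is needed because $V$ is a module for the full quantum loop algebra rather than a single $U_q(\mathfrak{sl}_2)$; however, since this lemma is quoted from \cite[Proposition~3.2]{cp}, I would cite the known finite-dimensional representation theory of $U_q(L(\mathfrak{sl}_2))$ to supply the semisimplicity and the precise weight structure, and then verify the eigenvalue formulas and the generator actions directly from the defining relations as above.
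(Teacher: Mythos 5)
Your overall strategy coincides with the paper's: the paper supplies no proof of its own for this lemma, simply quoting it from \cite[Proposition~3.2]{cp}, and you too ultimately defer the genuinely hard points (semisimplicity of $K_1$, the absence of gaps in the weight string, and the fact that the leading eigenvalue is $\pm q^{d}$ rather than an arbitrary nonzero scalar times $q^{d}$) to that same citation. The highest-weight outline you sketch around the citation is the standard one.

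However, the single computation you do carry out explicitly is incorrect, and its conclusion contradicts the lemma. By Definition~\ref{def:lp1}, for $i\ne j$ one has $K_ie_j^{\pm}K_i^{-1}=q^{\mp 2}e_j^{\pm}$, hence $K_1e_0^{+}K_1^{-1}=q^{-2}e_0^{+}$; you instead use $K_1e_0^{+}=q^{2}e_0^{+}K_1$. The correct computation for $v\in U_i$ reads $K_1(e_0^{+}v)=q^{-2}e_0^{+}K_1v=\gamma q^{d-2(i+1)}e_0^{+}v$, so $e_0^{+}v\in U_{i+1}$, in agreement with the containment $e_0^{+}U_i\subseteq U_{i+1}$ you are trying to prove. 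Your version lands $e_0^{+}v$ in $U_{i-1}$, contradicting both that containment and your own earlier (correct) remark that $e_0^{+}$ lowers the $K_1$-eigenvalue by $q^{2}$: with the labelling $K_1=\gamma q^{d-2i}I$ on $U_i$, lowering the eigenvalue means raising the index $i$. The same bookkeeping must be done for the other three generators, where "analogously" in your text would propagate the same sign error. Separately, your claim that consistency of the string "forces" the top eigenvalue to be $\gamma q^{d}$ with $\gamma^{2}=1$ is asserted rather than argued --- nothing in your sketch excludes eigenvalue strings $\{cq^{d-2i}\}_{i=0}^{d}$ with $c$ an arbitrary nonzero scalar; that is precisely the nontrivial content carried by the citation, so leaning on \cite{cp} for it is legitimate (and matches the paper), but it should be flagged as borrowed rather than presented as a consequence of your construction.
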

\begin{1def}
\label{defd}
Let $V$ denote a finite-dimensional irreducible $U_q(L(\mathfrak{sl}_2))$-module. By the {\it diameter} of $V$ we mean the scalar $d$ from Lemma {\ref {lem:dec}}. By the {\it type} of $V$ we mean the scalar $\gamma$ from Lemma {\ref {lem:dec}}. The sequence $\{U_i\}_{i=0}^{d}$ is the {\it weight space} decomposition of $V$ relative to $K_0$ and $K_1$.
\end{1def}

\begin{lem}
\label{lem:415}
Let $V$ denote a finite-dimensional irreducible $U_q(L(\mathfrak{sl}_2))$-module of type $1$. Let $\{U_i\}_{i=0}^{d}$ denote the weight space decomposition of $V$ from Lemma {\ref {lem:dec}}. Then $U_0$ and $U_d$ have dimension $1$.
\end{lem}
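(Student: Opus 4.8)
The plan is to recognize $U_0$ and $U_d$ as the two extreme weight spaces for the horizontal Cartan subalgebra and to show, by highest-weight theory, that each is one-dimensional. Throughout I use the decomposition $\{U_i\}_{i=0}^d$ of Lemma \ref{lem:dec}, on which $K_1$ acts as $q^{d-2i}$; thus $\{U_i\}_{i=0}^d$ is the weight-space decomposition of $V$ for the horizontal Cartan element $K_1$, and $U_0,U_d$ are its two extreme components. I would invoke Drinfel'd's new realization of $U_q(L(\mathfrak{sl}_2))$ together with its triangular decomposition $U^-U^0U^+$ (see \cite{cp}), where $U^{\pm}$ are generated by the loop root vectors $x^{\pm}_k$ ($k\in\mathbb{Z}$) and $U^0$ is the commutative Drinfel'd Cartan subalgebra, generated by $K_1^{\pm 1}$ together with the imaginary root vectors.

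The key steps, in order, are as follows. First I note that every positive loop root vector satisfies $K_1x^{+}_kK_1^{-1}=q^2x^{+}_k$ independently of $k$; hence each $x^{+}_k$ raises the $K_1$-weight and so maps $U_i$ into $U_{i-1}$. In particular every $x^{+}_k$ annihilates $U_0$, so the augmentation ideal of $U^+$ annihilates each $w\in U_0$, giving $U^+w=\mathbb{F}w$. Now fix $0\ne w\in U_0$; by the triangular decomposition $V=U^-U^0U^+w=U^-U^0w$, and since $U^-$ raises the index while $U^0$ preserves it, comparison of the bottom graded component yields $U_0=U^0w$. Because $V$ is irreducible, every nonzero $w\in U_0$ generates $V$, so this identity holds for every such $w$; that is, $U_0$ is a simple module for the commutative algebra $U^0$. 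As $\mathbb{F}$ is algebraically closed and $U_0$ is finite-dimensional, a simple module for a commutative $\mathbb{F}$-algebra is one-dimensional, so $\dim U_0=1$. The equality $\dim U_d=1$ follows by the symmetric argument, using that every negative loop root vector $x^{-}_k$ maps $U_i$ into $U_{i+1}$ and hence annihilates $U_d$, together with the opposite triangular decomposition $V=U^+U^0U^-w'=U^+U^0w'$ for $0\ne w'\in U_d$.

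I expect the main obstacle to be the structural input rather than the final combinatorics: one must match the eigenspace decomposition of Lemma \ref{lem:dec} with the horizontal weight grading of the Drinfel'd realization, and in particular verify that \emph{all} positive loop root vectors share the same $K_1$-weight, so that $U_0$ is genuinely a highest weight space for the entire positive part $U^+$, and not merely for the two index-lowering Chevalley generators $e_0^-,e_1^+$ that visibly annihilate it by Lemma \ref{lem:dec}. Granting the triangular decomposition and this grading compatibility from \cite{cp}, the remaining argument is immediate. As an alternative that avoids the Drinfel'd realization, one may instead cite the Chari--Pressley classification of the finite-dimensional irreducible $U_q(L(\mathfrak{sl}_2))$-modules of type $1$ as tensor products of evaluation modules: the extreme weight space of such a tensor product is the tensor product of the extreme weight spaces of the factors, each of which is one-dimensional, whence $\dim U_0=\dim U_d=1$ directly.
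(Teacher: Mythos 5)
Your proof is correct, but it takes a genuinely different route from the paper, whose entire proof is the one-line citation \cite[Lemma~3.12]{non}; the Drinfel'd realization and its triangular decomposition appear nowhere in the paper, which works only with the Chevalley and equitable presentations. Your argument is the standard pseudo-highest-weight argument of Chari--Pressley, and each step checks out: since the imaginary root has zero horizontal weight, every loop root vector $x_k^+$ has the same $K_1$-weight and hence kills $U_0$ (and, as you rightly stress, this is the genuine content --- annihilation by the two visible lowering operators $e_0^-,e_1^+$ of Lemma \ref{lem:dec} alone would not suffice); then $V=U^-U^0w$ for any $0\ne w\in U_0$ by the triangular decomposition, matching $K_1$-eigenspaces gives $U_0=U^0w$, so $U_0$ is an irreducible module for the commutative algebra $U^0$ and hence one-dimensional over the algebraically closed field $\mathbb{F}$; the mirror argument with the opposite decomposition $U^+U^0U^-$ gives $\dim U_d=1$. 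What each approach buys: the paper's citation keeps its toolkit minimal, at the cost of opacity; your proof is self-contained modulo standard structure theory, but the structural inputs (Drinfel'd realization, triangular decomposition in both orders, commutativity of the Drinfel'd Cartan) must be imported with care, since \cite{cp} states the realization without proof, and your grading-compatibility check is exactly what makes the import legitimate. Your fallback argument --- quoting the Chari--Pressley classification of type-$1$ irreducibles as tensor products of evaluation modules, whose extreme weight spaces are tensor products of one-dimensional spaces --- is also valid, though it invokes a much stronger theorem than the lemma needs.
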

\begin{proof}
By \cite[\rm Lemma~3.12]{non}.
\end{proof}

We now give a detailed description of the finite-dimensional irreducible $U_q(L(\mathfrak{sl}_2))$-modules of type $1$ and diameter $1$.
\begin{lem}
\label{eg1}
\rm
(See \cite[\rm Section~4]{cp}.)
\it
There exists a family of finite-dimensional irreducible $U_q(L(\mathfrak{sl}_2))$-modules
\begin{equation}
\label{evae}
V(1,a) \qquad \qquad 0\ne a\in \mathbb{F}
\end{equation}
with the following property: $V(1,a)$ has a basis $v_0,v_1$ with respect to which the Chevalley generators are represented by the following matrices:
\begin{gather*}
e_0^+:\,
\begin{bmatrix}
    0       & 0 \\
    a       & 0
\end{bmatrix},\qquad
e_0^-:\,
\begin{bmatrix}
    0       & a^{-1} \\
    0       & 0
\end{bmatrix},\qquad
K_0:\,
\begin{bmatrix}
    q^{-1}       & 0 \\
    0       & q
\end{bmatrix},\\
e_1^+:\,
\begin{bmatrix}
    0       & 1 \\
    0       & 0
\end{bmatrix},\qquad
e_1^-:\,
\begin{bmatrix}
    0       & 0 \\
    1       & 0
\end{bmatrix},\qquad
K_1:\,
\begin{bmatrix}
    q       & 0 \\
    0       & q^{-1}
\end{bmatrix}.
\end{gather*}
Moreover $V(1,a)$ has type $1$ and diameter $1$. Every finite-dimensional irreducible $U_q(L(\mathfrak{sl}_2))$-module of type $1$ and diameter $1$ is isomorphic to exactly one of the modules (\ref {evae}).
\end{lem}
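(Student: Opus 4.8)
The plan is to treat existence and classification separately. For existence, I would first verify that the displayed matrices define a $U_q(L(\mathfrak{sl}_2))$-module by checking each relation in Definition \ref{def:lp1} directly. Most of these are short $2\times 2$ computations: from the given matrices $K_0^{-1}=K_1$, so the relations $K_iK_i^{-1}=1$ and $K_0K_1=1$ are immediate, and the relations expressing $K_ie_j^\pm K_i^{-1}$ as a power of $q$ times $e_j^\pm$ follow by inspection. The commutator relations $e_i^+e_i^--e_i^-e_i^+=(K_i-K_i^{-1})/(q-q^{-1})$ reduce to checking that the left side equals $\mathrm{diag}(-1,1)$ for $i=0$ and its negative for $i=1$. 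The mixed relations $e_0^\pm e_1^\mp=e_1^\mp e_0^\pm$ hold because all four relevant products vanish. Finally, since each $e_i^\pm$ squares to zero on the two-dimensional space, every monomial in the $q$-Serre relation contains a factor $(e_i^\pm)^2=0$, so those relations hold automatically.

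Irreducibility of $V(1,a)$ follows because any nonzero submodule $W$ contains a vector with nonzero $v_0$- or $v_1$-component, and applying $e_1^+,e_1^-$ then produces both $v_0$ and $v_1$; hence $W=V(1,a)$. To read off the type and diameter I would exhibit the weight space decomposition of Lemma \ref{lem:dec}: since $K_0=\mathrm{diag}(q^{-1},q)$, the lines $\mathbb{F}v_0$ and $\mathbb{F}v_1$ are the components $U_0,U_1$ with $K_0$-eigenvalues $q^{-1}$ and $q$, matching $\gamma=1$ and $d=1$.

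For the classification, let $V$ be an arbitrary finite-dimensional irreducible module of type $1$ and diameter $1$, with weight decomposition $\{U_i\}_{i=0}^{1}$ from Lemma \ref{lem:dec}. By Lemma \ref{lem:415} each of $U_0,U_1$ is one-dimensional, so $\dim V=2$. Fix $0\ne u_0\in U_0$. Lemma \ref{lem:dec} gives $e_1^-U_0\subseteq U_1$ and $e_1^+U_1\subseteq U_0$; writing $e_1^-u_0=cu_1$ for a basis vector $u_1$ of $U_1$ and applying the $i=1$ commutator relation on $u_0$ (noting $e_1^+u_0=0$ and that $K_1$ acts on $U_0$ as $q$) forces the product of the two structure constants to equal $1$, so after rescaling $u_1$ I may assume $e_1^-u_0=u_1$ and $e_1^+u_1=u_0$. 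Applying the $i=0$ commutator relation on $u_1$ (noting $e_0^+u_1=0$) in the same way yields $e_0^+u_0=au_1$ and $e_0^-u_1=a^{-1}u_0$, with $a\ne 0$ forced by the relation. The remaining actions vanish by the containments of Lemma \ref{lem:dec}, so in the basis $u_0,u_1$ the generators are represented by exactly the matrices defining $V(1,a)$; thus $V\cong V(1,a)$.

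It remains to show the parameter is an isomorphism invariant, giving the word ``exactly''. Here $a$ is recovered intrinsically as the scalar by which $e_1^+e_0^+$ acts on the one-dimensional space $U_0$ (indeed $e_1^+e_0^+u_0=au_0$), and this is unchanged under module isomorphism; hence distinct values of $a$ give non-isomorphic modules. The main obstacle is bookkeeping rather than conceptual: one must confirm that the basis normalization in the classification step is consistent with every defining relation simultaneously, and in particular that no relation (the $q$-Serre or mixed relations) imposes a hidden constraint on $a$ beyond $a\ne 0$. The nilpotency of each $e_i^\pm$ on the two-dimensional module makes these checks collapse, which is why $a$ ranges freely over $\mathbb{F}\setminus\{0\}$.
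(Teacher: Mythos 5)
Your proof is correct, but it takes a different route from the paper: the paper offers no argument at all for this lemma, simply citing Chari--Pressley \cite[Section~4]{cp}, where $V(1,a)$ arises as an evaluation module (the pullback of the two-dimensional $U_q(\mathfrak{sl}_2)$-module via the evaluation homomorphism with parameter $a$) and the classification follows from the general theory of finite-dimensional irreducible $U_q(L(\mathfrak{sl}_2))$-modules developed there. You instead give a self-contained elementary verification: direct matrix checks of the defining relations (with the $q$-Serre and mixed relations collapsing because $(e_i^\pm)^2=0$ in dimension $2$), irreducibility by producing $v_0,v_1$ from any nonzero vector, and a classification that runs on the paper's own internal tools --- the weight space decomposition of Lemma \ref{lem:dec}, the one-dimensionality of $U_0,U_1$ from Lemma \ref{lem:415}, and the two commutator relations to normalize structure constants, with $a$ recovered as the eigenvalue of $e_1^+e_0^+$ on $U_0$ (which is exactly the scalar $\sigma_1$ of Definition \ref{def:51}, consistent with $P_V(z)=1-az$ in Lemma \ref{eg2}). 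What your approach buys is independence from the literature and transparency about why $a$ is a complete isomorphism invariant; what the citation buys is generality, since the evaluation-module machinery of \cite{cp} handles all diameters at once, whereas your argument exploits features special to diameter $1$. One small remark: in your final paragraph the worry about ``hidden constraints'' on $a$ in the classification step is moot --- there $V$ is already given as a module, so all relations hold automatically and only the existence direction requires checking them, which you had already done.
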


\begin{1def}
\label{def:eva}
Referring to Lemma {\ref {eg1}}, we call $a$ the {\it evaluation parameter} of $V(1,a)$.
\end{1def}

We now describe how the $U_q(L(\mathfrak{sl}_2))$-modules (\ref {evae}) look from the equitable point of view.
\begin{lem}
\label{lem:qeva}
Recall the basis $v_0,v_1$ of $V(1,a)$ from Lemma {\ref {eg1}}. With respect to this basis the equitable generators are represented by the following matrices:
\begin{gather*}
X_{01}:\,
\begin{bmatrix}
    q^{-1}       & (q-q^{-1})a^{-1} \\
    0       & q
\end{bmatrix},\qquad
X_{23}:\,
\begin{bmatrix}
    q       & 0 \\
    q-q^{-1}       & q^{-1}
\end{bmatrix},\qquad \\
X_{12}:\,
\begin{bmatrix}
    q       & q^{-1}-q \\
    0       & q^{-1}
\end{bmatrix},\qquad
X_{30}:\,
\begin{bmatrix}
    q^{-1}       & 0 \\
    (q^{-1}-q)a      & q
\end{bmatrix},\qquad \quad\\
X_{13}:\,
\begin{bmatrix}
    q       & 0 \\
    0       & q^{-1}
\end{bmatrix},\qquad \qquad
X_{31}:\,
\begin{bmatrix}
    q^{-1}       & 0 \\
    0       & q
\end{bmatrix}.
\end{gather*}
\end{lem}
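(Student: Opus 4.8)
The plan is to obtain each equitable generator directly from its expression in terms of the Chevalley generators, using the matrices supplied in Lemma \ref{eg1}. By the isomorphism of Lemma \ref{lem:lp2} together with the identification convention in the Note following it, on any $U_q(L(\mathfrak{sl}_2))$-module the equitable generators act by the formulas \ref{42}--\ref{44}: explicitly $X_{13}=K_1$, $X_{31}=K_0$, $X_{12}=K_1-(q-q^{-1})e_1^+$, $X_{30}=K_0-(q-q^{-1})e_0^+$, $X_{01}=K_0+q(q-q^{-1})K_0e_0^-$, and $X_{23}=K_1+q(q-q^{-1})K_1e_1^-$. Thus the whole lemma reduces to substituting the six matrices of Lemma \ref{eg1} into these expressions and simplifying in $\mathrm{Mat}_2(\mathbb{F})$.

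I would dispatch the cases in order of increasing effort. The diagonal generators are immediate, since $X_{13}$ and $X_{31}$ are by definition the matrices already recorded for $K_1$ and $K_0$. For $X_{12}$ and $X_{30}$ I would subtract $(q-q^{-1})$ times the strictly triangular matrix of $e_1^+$, respectively $e_0^+$, from the diagonal matrix of $K_1$, respectively $K_0$; this produces the single off-diagonal entry $-(q-q^{-1})=q^{-1}-q$ for $X_{12}$ and $-(q-q^{-1})a=(q^{-1}-q)a$ for $X_{30}$, matching the asserted matrices. The only marginally longer cases are $X_{01}$ and $X_{23}$, where I would first form the products $K_0e_0^-$ and $K_1e_1^-$; the scalar prefactor $q$ then cancels against the $q^{-1}$ contributed by $K_0$ (respectively $K_1$), so that $q(q-q^{-1})K_0e_0^-$ has off-diagonal entry $(q-q^{-1})a^{-1}$ and $q(q-q^{-1})K_1e_1^-$ has off-diagonal entry $q-q^{-1}$, again as claimed.

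Since each computation is a single explicit $2\times 2$ matrix manipulation with no choices to make, there is no genuine conceptual obstacle here; the verification is entirely mechanical. The only thing requiring care is the bookkeeping of the scalar factors $q$, $q-q^{-1}$, and $a^{\pm 1}$ — in particular the cancellation $q\cdot q^{-1}=1$ hidden inside $X_{01}$ and $X_{23}$, which is what turns the naive prefactor $q(q-q^{-1})$ into the clean entries appearing in the statement.
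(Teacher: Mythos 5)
Your proposal is correct and is exactly the paper's argument: the paper's proof is simply ``By Lemma \ref{lem:lp2},'' meaning one substitutes the Chevalley-generator matrices of Lemma \ref{eg1} into the expressions (\ref{42})--(\ref{44}) and computes, which is precisely what you carry out (including the key cancellation $q\cdot q^{-1}=1$ in $X_{01}$ and $X_{23}$). No gaps; your write-up just makes the mechanical verification explicit.
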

\begin{proof}
By Lemma {\ref {lem:lp2}}.
\end{proof}

\begin{lem}
\label{lem:tra}
For $0\ne a\in \mathbb{F}$, the following equations hold on the $U_q(L(\mathfrak{sl}_2))$-module $V(1,a)$ from Lemma {\ref {eg1}}:
\begin{gather}
{\rm{tr}}(X_{01}X_{23})=2+(q-q^{-1})^2a^{-1},\label{tr1}\\
{\rm{tr}}(X_{12}X_{30})=2+(q-q^{-1})^2a, \label{tr2}
\end{gather}
where ${\rm{tr}}$ denotes trace.
\end{lem}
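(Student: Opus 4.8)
The plan is to compute both traces directly from the explicit matrix representations of the equitable generators supplied by Lemma~{\ref{lem:qeva}}. Since the trace of a product depends only on the diagonal entries of that product, I would not bother forming the full $2\times 2$ products; instead I would extract only the $(1,1)$ and $(2,2)$ entries of each product and add them. This reduces each claim to evaluating two scalar expressions and confirming their sum matches the stated right-hand side.

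For the first equation, I would multiply the matrices representing $X_{01}$ and $X_{23}$ from Lemma~{\ref{lem:qeva}}. The $(1,1)$ entry of the product is $q^{-1}\cdot q+(q-q^{-1})a^{-1}\cdot(q-q^{-1})=1+(q-q^{-1})^2a^{-1}$, and the $(2,2)$ entry is $q\cdot q^{-1}=1$. Summing the two diagonal entries yields $2+(q-q^{-1})^2a^{-1}$, which is exactly {\rm(\ref{tr1})}.

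For the second equation, I would proceed identically with the matrices representing $X_{12}$ and $X_{30}$. Here the $(1,1)$ entry of the product is $q\cdot q^{-1}+(q^{-1}-q)\cdot(q^{-1}-q)a=1+(q^{-1}-q)^2a$, while the $(2,2)$ entry is $q^{-1}\cdot q=1$. Using the identity $(q^{-1}-q)^2=(q-q^{-1})^2$, the sum of the diagonal entries becomes $2+(q-q^{-1})^2a$, establishing {\rm(\ref{tr2})}.

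There is essentially no conceptual obstacle to overcome: the argument is a routine verification. The only points requiring a little care are bookkeeping of signs in the off-diagonal entries (which feed into the $(1,1)$ diagonal contribution through the product) and recognizing that $(q^{-1}-q)^2$ and $(q-q^{-1})^2$ coincide, so that the two equations take the symmetric form $2+(q-q^{-1})^2a^{\mp1}$ with $a^{-1}$ appearing for $X_{01}X_{23}$ and $a$ for $X_{12}X_{30}$.
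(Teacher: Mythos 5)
Your proposal is correct and is exactly the paper's approach: the paper's proof is simply ``Use Lemma~\ref{lem:qeva},'' i.e., compute the traces directly from the matrices representing the equitable generators, which you carry out explicitly (and accurately, including the sign observation $(q^{-1}-q)^2=(q-q^{-1})^2$). No gap to report.
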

\begin{proof}
Use Lemma {\ref {lem:qeva}}.
\end{proof}

\begin{lem}
For $0\ne a\in \mathbb{F}$, let $v_0,v_1$ denote the basis of $V(1,a)$ from Lemma {\ref {eg1}}. Let $U_0,U_1$ denote the weight space decomposition of $V(1,a)$. Then $U_i$ is spanned by $v_i$ for $i=0,1$.
\end{lem}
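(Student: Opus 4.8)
The plan is to identify the weight spaces explicitly from the action of $K_0$ and $K_1$ on the basis $v_0,v_1$, and then confirm that this identification matches the defining property of the weight space decomposition given in Lemma~\ref{lem:dec}. First I would recall from Lemma~\ref{eg1} that $V(1,a)$ has type $\gamma=1$ and diameter $d=1$, so that Lemma~\ref{lem:dec} guarantees a unique decomposition $\{U_i\}_{i=0}^{1}$ satisfying $(K_0-q^{2i-1}I)U_i=0$ and $(K_1-q^{1-2i}I)U_i=0$ for $i=0,1$. Writing these out gives the eigenvalue requirements $K_0v=q^{-1}v$, $K_1v=qv$ on $U_0$, and $K_0v=qv$, $K_1v=q^{-1}v$ on $U_1$.

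Next I would read off the action of $K_0$ and $K_1$ directly from the matrices in Lemma~\ref{eg1}. Since $K_0$ is represented by $\mathrm{diag}(q^{-1},q)$ and $K_1$ by $\mathrm{diag}(q,q^{-1})$ with respect to $v_0,v_1$, we see at once that $K_0v_0=q^{-1}v_0$, $K_1v_0=qv_0$, while $K_0v_1=qv_1$, $K_1v_1=q^{-1}v_1$. Comparing with the eigenvalue requirements above, $v_0$ satisfies exactly the conditions defining $U_0$ and $v_1$ satisfies exactly the conditions defining $U_1$. Hence $\spn\{v_0\}\subseteq U_0$ and $\spn\{v_1\}\subseteq U_1$.

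Finally I would invoke dimension counting together with the uniqueness in Lemma~\ref{lem:dec}. Because $\{U_0,U_1\}$ is a decomposition of the two-dimensional space $V(1,a)$ into nonzero components, each $U_i$ has dimension $1$; since $v_i$ is a nonzero vector lying in $U_i$, it follows that $U_i=\spn\{v_i\}$ for $i=0,1$. I do not anticipate any genuine obstacle here: the entire argument is a direct comparison of the explicit diagonal matrices for $K_0,K_1$ with the eigenvalue conditions from Lemma~\ref{lem:dec}, and the only point requiring a word of care is matching the sign conventions in the exponents $q^{2i-d}$ and $q^{d-2i}$ when $d=1$, which is routine.
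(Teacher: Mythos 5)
Your proposal is correct and takes essentially the same approach as the paper, whose proof is the one line ``use the matrices representing $K_0$ and $K_1$ in Lemma~\ref{eg1}'': you read off those diagonal matrices and compare with the eigenvalue conditions of Lemma~\ref{lem:dec}. One cosmetic remark: the containment is cleanest run in the opposite direction — each $U_i$ is nonzero and contained (by Lemma~\ref{lem:dec}) in the corresponding eigenspace of $K_0$, which from the matrix $\mathrm{diag}(q^{-1},q)$ is exactly $\spn\{v_i\}$ since $q^{2}\neq 1$, so $U_i=\spn\{v_i\}$ — but this is the same dimension count you already invoke.
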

\begin{proof}
Use the matrices that represent $K_0$ and $K_1$ in Lemma {\ref {eg1}}.
\end{proof}

\section{The Drinfel'd polynomial of a $U_q(L(\mathfrak{sl}_2))$-module}
In this section we recall the Drinfel'd polynomial of a finite-dimensional irreducible $U_q(L(\mathfrak{sl}_2))$-module of type $1$.

\begin{1def}
\label{def:51}
Let $V$ denote a finite-dimensional irreducible $U_q(L(\mathfrak{sl}_2))$-module of type $1$ and diameter $d$. Let $\{U_i\}_{i=0}^{d}$ denote the weight space decomposition of $V$ from Lemma {\ref {lem:dec}}. Pick $j\in\mathbb{N}$. By Lemmas {\ref {lem:dec}} and {\ref {lem:415}}, we see that $U_0$ is an eigenspace for $(e_1^+)^j(e_0^+)^j$; let $\sigma_j$ denote the corresponding eigenvalue. Observe that $\sigma_0=1$, and $\sigma_j=0$ if $j>d$.
\end{1def}

\begin{1def}
(See \cite[\rm Section~3.5]{cp}.)
\label{def:52}
Let $V$ denote a finite-dimensional irreducible $U_q(L(\mathfrak{sl}_2))$-module of type $1$ and diameter $d$. We define a polynomial $P_V \in\mathbb{F}[z]$ by
\begin{gather*}
P_V=\sum_{i=0}^{d} \frac{(-1)^i\sigma_iz^i}{([i]_q^{!})^2},
\end{gather*}
where the scalars $\sigma_i$ are from Definition {\ref {def:51}}. Observe that $P_V$ has constant coefficient $1$. We call $P_V$ the {\it Drinfel'd polynomial} of $V$.
\end{1def}

The polynomial $P_V$ has the following property.

\begin{thm}
\label{thm:57}
\rm
(See \cite[\rm p.~261]{cp}.)
\it
The map $V\mapsto P_V$ induces a bijection between the following two sets:
\begin{enumerate}
\item[\rm(i)] the isomorphism classes of finite-dimensional irreducible $U_q(L(\mathfrak{sl}_2))$-modules of type $1$;
\item[\rm(ii)] the polynomials in $\mathbb{F}[z]$ that have constant coefficient $1$.
\end{enumerate}
\end{thm}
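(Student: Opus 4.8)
The plan is to prove this by the highest-weight classification of finite-dimensional modules for the quantum loop algebra, specialized to $\mathfrak{sl}_2$; this is essentially the argument of Chari and Pressley, so I only indicate its shape. Fix a finite-dimensional irreducible $U_q(L(\mathfrak{sl}_2))$-module $V$ of type $1$ and diameter $d$, with weight space decomposition $\{U_i\}_{i=0}^d$ as in Lemma \ref{lem:dec}. The map $V\mapsto P_V$ is well defined on isomorphism classes because the scalars $\sigma_j$ of Definition \ref{def:51} are intrinsic: they are the eigenvalues of the operators $(e_1^+)^j(e_0^+)^j$ on the one-dimensional space $U_0$ (Lemma \ref{lem:415}), and $P_V$ is assembled from them in Definition \ref{def:52}. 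It remains to prove injectivity and surjectivity.

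For injectivity I would pass to the Drinfel'd new realization of $U_q(L(\mathfrak{sl}_2))$, with its loop generators, commuting Cartan-type generators $\psi_r^{\pm}$, and the associated triangular decomposition. The vector spanning $U_0$ is a pseudo-highest-weight vector: it is annihilated by the loop raising operators and is a simultaneous eigenvector for the $\psi_r^{\pm}$, whose eigenvalues are packaged into a rational generating function determined by a single polynomial. One checks that this polynomial coincides with $P_V$, so that the $\sigma_j$ recover the full highest $\ell$-weight; the action of $K_0,K_1$ on $U_0$ is already fixed by the type and by $d=\deg P_V$. Standard highest-weight theory then shows that a finite-dimensional irreducible module is determined up to isomorphism by its highest $\ell$-weight, being the unique irreducible quotient of the universal highest-weight module attached to that weight. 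Hence $P_V=P_{V'}$ forces $V\cong V'$.

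For surjectivity I would realize a prescribed polynomial using the evaluation modules of Lemma \ref{eg1}. Given $P\in\mathbb{F}[z]$ with constant coefficient $1$, factor it over the algebraically closed field $\mathbb{F}$ as $P(z)=\prod_{i=1}^{n}(1-b_iz)$ with each $b_i\ne 0$; a direct computation with Lemma \ref{eg1} gives $P_{V(1,a)}(z)=1-az$, so each linear factor is the Drinfel'd polynomial of an evaluation module. Form the tensor product $W=V(1,b_1)\otimes\cdots\otimes V(1,b_n)$ using the Hopf structure of $U_q(L(\mathfrak{sl}_2))$, let $w$ be the tensor of the highest-weight vectors, and let $V$ be the irreducible quotient of the submodule generated by $w$. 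Because the highest $\ell$-weight is multiplicative under tensor products, the highest $\ell$-weight of $V$ equals the product of those of the factors, whence $P_V=\prod_{i=1}^{n}(1-b_iz)=P$.

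The step I expect to be the main obstacle is matching the concrete definition of $P_V$, via the eigenvalues of $(e_1^+)^j(e_0^+)^j$ on $U_0$, with the abstract highest $\ell$-weight coming from the Drinfel'd generators, and then propagating this identification through the tensor product: one must expand $(e_1^+)^j(e_0^+)^j$ on $w$ using the coproduct and verify that the resulting $\sigma_j$ are exactly the coefficients produced by multiplying the factor polynomials. This is where the $q$-combinatorics enters, and where the normalization by $([j]_q^!)^2$ in Definition \ref{def:52} is needed to make the answer come out as the product $\prod_{i=1}^{n}(1-b_iz)$.
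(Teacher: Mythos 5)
The paper gives no proof of this theorem at all---it is quoted directly from Chari--Pressley \cite[p.~261]{cp}---and your sketch is essentially that reference's own argument: pseudo-highest-weight theory in the Drinfel'd realization for injectivity, and tensor products of the evaluation modules $V(1,a)$ for surjectivity. The technical obstacle you flag, namely matching the $\sigma_j$-based definition of $P_V$ in Definition~\ref{def:52} with the Drinfel'd-generator description of the highest $\ell$-weight, is precisely what is carried out in \cite[pp.~268--269]{cp} (as the paper's own Note following Lemma~\ref{lem:54} records), so your proposal is correct in outline and takes the same route as the source the paper relies on.
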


We have been discussing the Drinfel'd polynomial $P_V$. We now mention a related polynomial $Q_V$.
\begin{1def}
\label{def:51b}
Let $V$ denote a finite-dimensional irreducible $U_q(L(\mathfrak{sl}_2))$-module of type $1$ and diameter $d$. Let $\{U_i\}_{i=0}^{d}$ denote the weight space decomposition of $V$ from Lemma {\ref {lem:dec}}. Pick $j\in\mathbb{N}$. By Lemmas {\ref {lem:dec}} and {\ref {lem:415}}, we see that $U_0$ is an eigenspace for $(e_0^-)^j(e_1^-)^j$; let $\mu_j$ denote the corresponding eigenvalue. Observe $\mu_0=1$, and $\mu_j=0$ if $j>d$.
\end{1def}

\begin{1def}
(See \cite[\rm Section~3.5]{cp}.)
\label{def:52b}
Let $V$ denote a finite-dimensional irreducible $U_q(L(\mathfrak{sl}_2))$-module of type $1$ and diameter $d$. We define a polynomials $ Q_V\in\mathbb{F}[z]$ by
\begin{gather*}
Q_V=\sum_{i=0}^{d} \frac{(-1)^i\mu_iz^i}{([i]_q^{!})^2},
\end{gather*}
where the scalars $\mu_i$ are from Definition {\ref {def:51b}}. Observe that $Q_V$ have constant coefficient $1$.
\end{1def}

\begin{lem}
\label{lem:54}
\rm
(See \cite[\rm p.~269]{cp}.)
\it
The polynomials $P_V$ and $Q_V$ are partners in the sense of Definition {\ref {def:part}}.
\end{lem}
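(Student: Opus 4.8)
The plan is to prove Lemma~\ref{lem:54}, which asserts that $P_V$ and $Q_V$ are partners, by relating the eigenvalues $\sigma_i$ and $\mu_i$ and then matching up coefficients against Definition~\ref{def:part}. First I would recall what it means for two polynomials of degree $d$ to be partners: if $f = \sum_{i=0}^d a_i z^i$ with $a_0 = 1$ and $a_d \neq 0$, then its partner is $f^\vee = \sum_{i=0}^d (a_{d-i}/a_d) z^i$. So the goal is to show that the coefficient of $z^i$ in $Q_V$ equals the coefficient of $z^{d-i}$ in $P_V$, divided by the leading coefficient of $P_V$. Writing $p_i = (-1)^i \sigma_i/([i]_q^!)^2$ for the coefficients of $P_V$ and $r_i = (-1)^i \mu_i/([i]_q^!)^2$ for those of $Q_V$, the claim reduces to the identity
\begin{gather*}
r_i = \frac{p_{d-i}}{p_d} \qquad (0 \le i \le d).
\end{gather*}

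The heart of the matter is therefore a relation between $\sigma_i$ and $\mu_i$. I expect this to follow from the structure of the $U_q(L(\mathfrak{sl}_2))$-module $V$, specifically from how the raising and lowering operators interact across the one-dimensional top and bottom weight spaces $U_0$ and $U_d$ (guaranteed one-dimensional by Lemma~\ref{lem:415}). The scalar $\sigma_j$ is the eigenvalue by which $(e_1^+)^j(e_0^+)^j$ acts on $U_0$, while $\mu_j$ is the eigenvalue by which $(e_0^-)^j(e_1^-)^j$ acts on $U_0$. Using Lemma~\ref{lem:dec}, the operators $e_0^+, e_1^-$ raise the index and $e_0^-, e_1^+$ lower it, so $(e_0^+)^d$ maps $U_0$ onto $U_d$ and $(e_1^-)^d$ does the same; the reverse compositions move $U_d$ back to $U_0$. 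The plan is to compute the scalar relating a suitable product of raising operators followed by lowering operators acting on a generator of $U_0$, and to extract from this the desired proportionality between the $\sigma_i$ and the $\mu_{d-i}$ (and in particular to identify the leading coefficient $p_d$ as the reciprocal constant). This is essentially a commutation-relation computation in $U_q(L(\mathfrak{sl}_2))$ exploiting that $U_0$ and $U_d$ are lines.

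The main obstacle will be pinning down the precise scalar that arises when a word in the lowering generators is pushed past a word in the raising generators using the defining relations of Definition~\ref{def:lp1} (the $q$-Serre and the $e_i^+ e_i^- - e_i^- e_i^+ = (K_i - K_i^{-1})/(q-q^{-1})$ relations), while keeping track of the $K_i$-eigenvalues $\gamma q^{\pm(d-2i)}$ on each $U_i$. Since the cited source \cite{cp} already establishes this on p.~269, the honest approach is to reduce the statement to the computation done there rather than to rederive every $q$-binomial coefficient. Concretely, I would verify the normalization: that $\sigma_d \neq 0$ and $\mu_d \neq 0$ (so both polynomials genuinely have degree $d$, making the partner relation well defined), which follows because $(e_0^+)^d$ and $(e_1^-)^d$ act as isomorphisms $U_0 \to U_d$ on the irreducible module. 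Once the proportionality $\mu_i = c\,\sigma_{d-i}$ for a single constant $c$ is in hand, substituting into the expressions for $p_i$ and $r_i$, and checking that the factors $([i]_q^!)^2$ versus $([d-i]_q^!)^2$ recombine correctly with the sign $(-1)^i$ versus $(-1)^{d-i}$, will finish the proof by exhibiting $Q_V = P_V^\vee$ in the sense of Definition~\ref{def:part}. The final bookkeeping with the factorial denominators is routine but must be done carefully, as that is where an off-by-one or a stray sign would most easily slip in.
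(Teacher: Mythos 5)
The paper itself offers no proof of Lemma~\ref{lem:54} beyond the citation to \cite[p.~269]{cp}, so your instinct to reduce the statement to the computation done there is, in principle, consistent with what the paper does. The genuine gap is in the pivotal identity your sketch aims to establish: the proportionality $\mu_i = c\,\sigma_{d-i}$ for a single constant $c$ independent of $i$. That identity is false whenever $d\ge 2$, and worse, it is incompatible with the conclusion you want. Unwinding Definition~\ref{def:part}, the partner relation $Q_V = P_V^{\vee}$ says precisely that
\begin{gather*}
\frac{\mu_i}{([i]_q^!)^2} \;=\; \frac{\sigma_{d-i}}{([d-i]_q^!)^2}\cdot\frac{([d]_q^!)^2}{\sigma_d}\qquad (0\le i\le d)
\end{gather*}
(the signs $(-1)^i$ cancel), so the ratio $\mu_i/\sigma_{d-i}$ must carry the $i$-dependent factor $([i]_q^!)^2([d]_q^!)^2/([d-i]_q^!)^2$. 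If you actually proved $\mu_i = c\,\sigma_{d-i}$, the final bookkeeping step would not ``recombine correctly''; it would contradict the lemma.

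A concrete check: take $d=2$ and $P_V(z)=(1-az)(1-bz)$, so $\sigma_1 = a+b$ and $\sigma_2 = [2]_q^2\,ab$, while $Q_V(z)=(1-a^{-1}z)(1-b^{-1}z)$ gives $\mu_1 = (a+b)/(ab)$ and $\mu_2 = [2]_q^2/(ab)$. Then
\begin{gather*}
\frac{\mu_0}{\sigma_2} = \frac{1}{[2]_q^2\,ab},\qquad
\frac{\mu_1}{\sigma_1} = \frac{1}{ab},\qquad
\frac{\mu_2}{\sigma_0} = \frac{[2]_q^2}{ab},
\end{gather*}
which are pairwise distinct, so no single constant $c$ exists. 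The repair is to aim directly at the displayed identity above (equivalently, at $r_i = p_{d-i}/p_d$ in your notation), keeping the $q$-factorials inside the relation between $\mu_i$ and $\sigma_{d-i}$ from the outset rather than treating them as an afterthought. Note also that the cited computation in \cite{cp} is not organized as a direct commutation of lowering words past raising words; it extracts the partner property from the fact that the two highest-weight eigenvalue series attached to $V$ are the expansions at $z=0$ and at $z=\infty$ of a single rational function built from $P_V$, which is what produces the reciprocal-roots relation. Your normalization step ($\sigma_d\ne 0$ and $\mu_d\ne 0$) is correct, but observe that it relies on lowest-weight-vector theory for the $U_q(\mathfrak{sl}_2)$-subalgebra generated by $e_0^{\pm}, K_0^{\pm 1}$, not merely on irreducibility of $V$.
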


\begin{note}
The polynomials $P, Q$ in \cite[\rm Section~3.5]{cp} are related to our polynomials $P_V, Q_V$ as follows: $P_V(z)=P(q^{-1}z)/P(0)$ and $Q_V(z)=Q(q^{-1}z)/Q(0)$. This is explained in \cite[\rm p.~268, 269]{cp}.
\end{note}

We now compute $P_V$ and $Q_V$ for the case in which $V$ has type $1$ and diameter $1$.
\begin{lem}
\label{eg2}
Pick $0\ne a\in\mathbb{F}$. Let $V$ denote the $U_q(L(\mathfrak{sl}_2))$-module $V(1,a)$ from Lemma {\ref {eg1}}. Then $P_V(z)=1-az$ and $Q_V(z)=1-a^{-1}z$.
\end{lem}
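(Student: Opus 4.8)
The plan is to read everything off the explicit matrices in Lemma~\ref{eg1} via Definitions~\ref{def:51}--\ref{def:52b}, since for diameter $1$ the relevant quantities reduce to a single scalar in each case. First I would record the data needed to evaluate those definitions. By the last lemma of Section~3, the weight space decomposition of $V(1,a)$ satisfies $U_0=\spn\{v_0\}$ and $U_1=\spn\{v_1\}$, and the diameter is $d=1$. Consequently, in both Definition~\ref{def:51} and Definition~\ref{def:51b} only the indices $j=0,1$ contribute: we have $\sigma_0=\mu_0=1$ by definition, and $\sigma_j=\mu_j=0$ for $j\ge 2$ since then $j>d$. Thus the entire computation reduces to finding the two scalars $\sigma_1$ and $\mu_1$, after which I invoke $[0]_q^{!}=1$ and $[1]_q^{!}=[1]_q=1$ to see that every denominator $([i]_q^{!})^2$ appearing in Definitions~\ref{def:52} and~\ref{def:52b} equals $1$.

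For $P_V$ I would compute $\sigma_1$, the eigenvalue of $(e_1^+)(e_0^+)$ on $U_0$, by applying this operator to $v_0$ using the matrices of Lemma~\ref{eg1}. Reading the columns of those matrices (recall that column $j$ of a representing matrix lists the coordinates of $Av_j$, so that the factors act right to left), one finds $e_0^+v_0=av_1$ and $e_1^+v_1=v_0$, hence $(e_1^+)(e_0^+)v_0=av_0$ and $\sigma_1=a$. Substituting into Definition~\ref{def:52} gives
\[
P_V(z)=\sigma_0-\sigma_1 z=1-az.
\]
For $Q_V$ I would carry out the analogous computation with $(e_0^-)(e_1^-)$ in place of $(e_1^+)(e_0^+)$: from the matrices one has $e_1^-v_0=v_1$ and $e_0^-v_1=a^{-1}v_0$, whence $(e_0^-)(e_1^-)v_0=a^{-1}v_0$ and $\mu_1=a^{-1}$. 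Definition~\ref{def:52b} then yields $Q_V(z)=1-a^{-1}z$.

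There is no substantial obstacle here; the statement is a direct matrix computation, and the only points requiring minor care are bookkeeping in nature, namely correctly applying the convention that column $j$ of a representing matrix gives $Av_j$ (so the operators compose in the stated order) and noting that the quantum-factorial denominators are trivial in this low-dimensional case. As a consistency check one may observe that the two answers are partners in the sense of Definition~\ref{def:part}, since the root of $1-az$ is $a^{-1}$ and the root of $1-a^{-1}z$ is $a$; this agrees with Lemma~\ref{lem:54}.
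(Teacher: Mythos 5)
Your proposal is correct and follows the same route as the paper's own proof: read $\sigma_1=a$ and $\mu_1=a^{-1}$ off the matrices of Lemma~\ref{eg1} using Definitions~\ref{def:51} and~\ref{def:51b}, then substitute into Definitions~\ref{def:52} and~\ref{def:52b}. The paper states this in one line; you have simply supplied the explicit column-by-column computation and the bookkeeping ($U_0=\spn\{v_0\}$, trivial $q$-factorials), all of which checks out.
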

\begin{proof}
Using the matrices that represent $e_0^+,e_1^+$ in Lemma {\ref {eg1}} along with Definition {\ref {def:51}}, we obtain $\sigma_1=a$. Therefore $P_V(z)=1-az$ in view of Definition {\ref {def:52}}. Similarly using Lemma {\ref {eg1}} and Definition {\ref {def:51b}}, we obtain $\mu_1=a^{-1}$. Therefore $Q_V(z)=1-a^{-1}z$ in view of Definition {\ref {def:52b}}.
\end{proof}

\section{The positive part of $U_q(L(\mathfrak{sl}_2))$}
In this section we recall a certain algebra $U_q^+$ called the positive part of $U_q(L(\mathfrak{sl}_2))$.
\begin{1def}
\label{def:61}
(See \cite[\rm Definition~1.1]{non}.)
Let $U_q^+$ denote the $\mathbb{F}$-algebra with generators $x,y$ and relations
\begin{gather*}
x^3y-[3]_qx^2yx+[3]_qxyx^2-yx^3=0,\\
y^3x-[3]_qy^2xy+[3]_qyxy^2-xy^3=0.
\end{gather*}
We call $x,y$ the {\it standard generators} for $U_q^+$.
\end{1def}

\begin{rmk}
\label{rmk:6}
(See \cite[\rm Corollary~3.2.6]{luz}.)
There exists an injective $\mathbb{F}$-algebra homomorphism from $U_q^+$ to $U_q(L(\mathfrak{sl}_2))$ that sends $x\mapsto e_0^+$ and $y\mapsto e_1^+$. Consequently we call $U_q^+$ the {\it positive part} of $U_q(L(\mathfrak{sl}_2))$.
\end{rmk}

Comparing the relations in Definition {\ref {def:box}} with the relations in Definition {\ref {def:61}}, we obtain an $\mathbb{F}$-algebra homomorphism $\kappa: U_q^+ \to \square_q$ that sends $x\mapsto x_{0}$ and $y\mapsto x_{2}$.
\begin{lem}
\label{lem:2}
\rm
(See \cite[Proposition~5.5]{p1}.)
\it
The above homomorphism $\kappa$ is injective.
\end{lem}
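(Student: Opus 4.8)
The plan is to deduce injectivity of $\kappa$ from the quantum loop algebra. Recall the homomorphism $\psi\colon\square_q\to U_q(L(\mathfrak{sl}_2))$ sending $x_i\mapsto X_{i,i+1}$. The composite $\psi\circ\kappa\colon U_q^+\to U_q(L(\mathfrak{sl}_2))$ sends $x\mapsto X_{01}$ and $y\mapsto X_{23}$, and since $\ker\kappa\subseteq\ker(\psi\circ\kappa)$ it is enough to show that $\psi\circ\kappa$ is injective. First I would use the formulas of Lemma~\ref{lem:lp2} to write $X_{01}=K_0+q(q-q^{-1})K_0e_0^-$ and $X_{23}=K_1+q(q-q^{-1})K_1e_1^-$, so that the image of $\psi\circ\kappa$ lies in the lower Borel subalgebra $U_q^{\le 0}$ generated by $e_0^-,e_1^-,K_0^{\pm 1},K_1^{\pm 1}$.

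Next I would introduce a filtration. Grade $U_q^{\le 0}$ by total degree in $e_0^-,e_1^-$; this is well defined because the relations among $e_0^-,e_1^-,K_0^{\pm 1},K_1^{\pm 1}$ (the $q$-commutations with the $K_i^{\pm 1}$ and the two $q$-Serre relations) are homogeneous in this degree. Give $U_q^+$ the word-length filtration, under which $U_q^+$ is graded since its defining relations are homogeneous of degree $4$. Then $\psi\circ\kappa$ is a filtered map, and the associated graded map $\operatorname{gr}(\psi\circ\kappa)$ sends $x,y$ to the top-degree parts of $X_{01},X_{23}$, namely $q(q-q^{-1})K_0e_0^-$ and $q(q-q^{-1})K_1e_1^-$. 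Since a filtered map with injective associated graded is itself injective, and since the common scalar $q(q-q^{-1})$ can be absorbed by a scaling automorphism of $U_q^+$, it suffices to prove that $g\colon U_q^+\to U_q^{\le 0}$, $x\mapsto K_0e_0^-$, $y\mapsto K_1e_1^-$, is injective.

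The crux is to show that the assignment $e_i^-\mapsto K_ie_i^-$, $K_i^{\pm 1}\mapsto K_i^{\pm 1}$ $(i\in\{0,1\})$ extends to an automorphism $\theta$ of $U_q^{\le 0}$. The only relations requiring checking are the $q$-Serre relations for $A=K_0e_0^-$ and $B=K_1e_1^-$. Using $e_i^-K_i=q^2K_ie_i^-$ and $K_0e_1^-K_0^{-1}=q^2e_1^-$, one finds $A^{3-r}BA^r=K_0^2(e_0^-)^{3-r}e_1^-(e_0^-)^r$ for $0\le r\le 3$, the accompanying powers of $q$ cancelling uniformly in $r$; hence the $q$-Serre combination for $A,B$ equals $K_0^2$ times the $q$-Serre combination for $e_0^-,e_1^-$, which vanishes, and the second $q$-Serre relation follows symmetrically. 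The inverse of $\theta$ sends $e_i^-\mapsto K_i^{-1}e_i^-$. Now $g=\theta\circ h$, where $h\colon U_q^+\to U_q^{\le 0}$ sends $x\mapsto e_0^-$, $y\mapsto e_1^-$. The map $h$ is injective: by Remark~\ref{rmk:6} the assignment $x\mapsto e_0^+$, $y\mapsto e_1^+$ gives an injection $U_q^+\to U_q(L(\mathfrak{sl}_2))$, and composing it with the automorphism of $U_q(L(\mathfrak{sl}_2))$ that interchanges $e_i^+\leftrightarrow e_i^-$ and $K_i\leftrightarrow K_i^{-1}$ yields $h$. Therefore $g=\theta\circ h$ is injective, which gives the injectivity of $\psi\circ\kappa$ and hence of $\kappa$.

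I expect the main obstacle to be the verification that $\theta$ is well defined, i.e. that $K_0e_0^-$ and $K_1e_1^-$ satisfy the $q$-Serre relations; this is the one genuine computation, and it is precisely what makes the associated-graded comparison work. A secondary point requiring care is the bookkeeping of the filtration: one must confirm that both filtrations are exhaustive and separated and that the source is graded, so that injectivity of $\operatorname{gr}(\psi\circ\kappa)$ legitimately forces injectivity of $\psi\circ\kappa$.
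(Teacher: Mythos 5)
Your argument is necessarily self-contained rather than a comparison with the paper, because the paper gives no internal proof of this lemma at all: it simply cites \cite[Proposition~5.5]{p1}. Most of your proposal is sound. The reduction of injectivity of $\kappa$ to injectivity of $\psi\circ\kappa$ is valid; the crux computation is correct: with $A=K_0e_0^-$, $B=K_1e_1^-$ one finds for $a+b=3$ that $A^aBA^b=q^NK_0^2(e_0^-)^ae_1^-(e_0^-)^b$ with $N=a(a-1)+b(b-1)+2ab-2(a+b)=(a+b)^2-3(a+b)=0$, so the $q$-Serre combination for $A,B$ is $K_0^2$ times that for $e_0^-,e_1^-$; and $h$ is injective by Remark~\ref{rmk:6} composed with the involution $e_i^{\pm}\mapsto e_i^{\mp}$, $K_i^{\pm1}\mapsto K_i^{\mp1}$, whose existence is immediate from Definition~\ref{def:lp1}.

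The gap is that at two points you silently assume a \emph{presentation} of the lower Borel subalgebra $U_q^{\le 0}$: once when you declare the $e^-$-degree grading well defined ``because the relations \dots are homogeneous,'' and again when you assert that ``the only relations requiring checking'' for $\theta$ are the $q$-Serre relations. For a subalgebra of $U_q(L(\mathfrak{sl}_2))$ specified only by its generators, knowing that these listed relations are defining relations is a triangular-decomposition statement; it is true, but it is a structural fact of comparable depth to the lemma itself, and you neither prove nor cite it, so as written the argument is circular at exactly its load-bearing joints. Both uses can be eliminated by working in the full algebra, whose presentation Definition~\ref{def:lp1} does supply: (i) the assignment $\deg e_i^-=1$, $\deg e_i^+=-1$, $\deg K_i^{\pm1}=0$ makes every defining relation of $U_q(L(\mathfrak{sl}_2))$ homogeneous, so the whole algebra is $\mathbb{Z}$-graded and your filtered/associated-graded argument runs verbatim with target $U_q(L(\mathfrak{sl}_2))$ instead of $U_q^{\le 0}$; (ii) your $\theta$ extends to an automorphism of all of $U_q(L(\mathfrak{sl}_2))$ by $e_i^-\mapsto K_ie_i^-$, $e_i^+\mapsto e_i^+K_i^{-1}$, $K_i^{\pm1}\mapsto K_i^{\pm1}$, with inverse $e_i^-\mapsto K_i^{-1}e_i^-$, $e_i^+\mapsto e_i^+K_i$; the mixed relations survive because, for instance, $(e_i^+K_i^{-1})(K_ie_i^-)-(K_ie_i^-)(e_i^+K_i^{-1})=e_i^+e_i^--e_i^-e_i^+$, and the remaining relations are checked the same way. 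With these repairs (or with an explicit citation for the triangular decomposition of $U_q(L(\mathfrak{sl}_2))$), your proof is complete, and it is a genuine alternative to the paper's reliance on the literature.
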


Recall the map $\psi$ from above Lemma {\ref {lem:1}}. Consider the composition
\begin{equation}
\label{m1}
\psi\circ\kappa:
\begin{tikzcd}
 U_q^+\arrow{r}{\kappa} & \square_q \arrow{r}{\psi}& U_q(L(\mathfrak{sl}_2)).
\end{tikzcd}
\end{equation}
Note that the map $\psi\circ\kappa$ is different from the map in Remark {\ref {rmk:6}}. Let $V$ denote a finite-dimensional irreducible $U_q(L(\mathfrak{sl}_2))$-module of type $1$. Consider the pullback of $V$ via $\psi\circ\kappa$. We now describe the resulting $U_q^+$-module $V$.
\begin{lem}
\label{irre}
\rm
(See \cite[\rm Proposition~12.1]{non}.)
\it
The above $U_q^+$-module $V$ is irreducible if and only if $P_V(1)\ne 0$.
\end{lem}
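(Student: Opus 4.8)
The map $\psi\circ\kappa$ sends the standard generators $x,y$ of $U_q^+$ to $X_{01}$ and $X_{23}$, so the pulled-back $U_q^+$-module structure is the one in which $x$ acts as $X_{01}$ and $y$ acts as $X_{23}$. The plan is to analyze the subalgebra $\mathcal{S}\subseteq \mathrm{End}(V)$ generated by $X_{01}$ and $X_{23}$ and to show that $V$ is $\mathcal{S}$-irreducible exactly when $P_V(1)\ne 0$. First I would record the action of these two operators on the weight space decomposition $\{U_i\}_{i=0}^d$ from Lemma~\ref{lem:dec}. Using the formulas $X_{01}=K_0+q(q-q^{-1})K_0e_0^-$ and $X_{23}=K_1+q(q-q^{-1})K_1e_1^-$ extracted from Lemma~\ref{lem:lp2}, together with $e_0^-U_i\subseteq U_{i-1}$ and $e_1^-U_i\subseteq U_{i+1}$, one sees that $X_{01}$ is block lower triangular with diagonal scalars $q^{2i-d}$ and $X_{23}$ is block upper triangular with diagonal scalars $q^{d-2i}$. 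In particular both operators are semisimple with eigenvalue set $\{q^{d-2i}\}_{i=0}^d$, and their eigenspace decompositions sit in the relative position required of a $q$-geometric tridiagonal pair.

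The next step is to give $P_V(1)$ a representation-theoretic meaning on the module. By Lemma~\ref{lem:415} the space $U_0$ is one-dimensional; fix $0\ne u\in U_0$. Since $(e_0^+)^i$ maps $U_0$ into $U_i$ and $(e_1^+)^i$ maps $U_i$ into $U_0$, Definitions~\ref{def:51} and~\ref{def:52} give
\[
P_V(1)\,u=\sum_{i=0}^{d}\frac{(-1)^i}{([i]_q^!)^2}\,(e_1^+)^i(e_0^+)^i\,u .
\]
Symmetrically, Definitions~\ref{def:51b} and~\ref{def:52b} express $Q_V(1)$ through the analogous sum of lowering operators on $U_0$, and the partner relation of Lemma~\ref{lem:54} shows $P_V(1)=0$ if and only if $Q_V(1)=0$. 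This reduces the problem to deciding when the cyclic structure on $U_0$ is nondegenerate.

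For the direction $P_V(1)\ne 0\Rightarrow V$ irreducible, I would show that the cyclic subspace $\mathcal{S}u$ is forced to be all of $V$. The operators $e_0^-,e_1^-$ already lie in the algebra generated by $X_{01},X_{23},K_0,K_1$ by the inverse isomorphism in Lemma~\ref{lem:lp2}, so the work is to recover, on $\mathcal{S}u$, enough of the raising data $e_0^+,e_1^+$; here the nonvanishing of $P_V(1)$ is exactly what lets one invert the relevant transition and conclude that $\mathcal{S}u$ is stable under all the equitable generators, hence equals $V$ by irreducibility of the $U_q(L(\mathfrak{sl}_2))$-module. For the converse, if $P_V(1)=0$ I would produce a proper nonzero $\mathcal{S}$-submodule as the kernel (or image) of the degenerate operator detected above, checking $X_{01}$- and $X_{23}$-invariance weight space by weight space using the triangular shapes found in the first step.

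The step I expect to be the main obstacle is precisely this last equivalence: translating the scalar condition $P_V(1)\ne 0$ into the generation statement $\mathcal{S}u=V$, and conversely into the existence of an invariant subspace. The difficulty is that $\mathcal{S}$ does not contain the equitable generators $X_{12},X_{30},X_{13},X_{31}$, so recovering the full module action from $X_{01},X_{23}$ is genuinely constrained, and this is exactly the point at which $P_V(1)$ must intervene. A clean way to organize the argument is to pass to the $q$-geometric tridiagonal pair $(X_{01},X_{23})$ and its split decomposition, phrasing the nondegeneracy through the Drinfel'd polynomial of the tridiagonal pair developed later in the paper; this is the technical heart of the proof.
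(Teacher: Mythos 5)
The paper does not actually prove this lemma: the statement is imported from the literature, and the paper's entire ``proof'' is the citation to \cite[Proposition~12.1]{non}. You instead attempt a proof from scratch, and your preparatory steps are sound: $\psi\circ\kappa$ does send $x\mapsto X_{01}$, $y\mapsto X_{23}$; by (\ref{42}), (\ref{44}) and Lemma~\ref{lem:dec} one gets $X_{01}U_i\subseteq U_{i-1}+U_i$ and $X_{23}U_i\subseteq U_i+U_{i+1}$ with diagonal scalars $q^{2i-d}$, $q^{d-2i}$, and this block-triangular shape does force semisimplicity (the minimal polynomial of each operator divides $\prod_{j=0}^{d}(z-q^{2j-d})$, which has distinct roots); the identity $P_V(1)u=\sum_{i=0}^{d}\frac{(-1)^i}{([i]_q^!)^2}(e_1^+)^i(e_0^+)^i u$ for $0\ne u\in U_0$ is a correct reading of Definitions~\ref{def:51}, \ref{def:52}; and $P_V(1)=0$ iff $Q_V(1)=0$ by Lemma~\ref{lem:54}.

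However, the equivalence itself is never established, and the gap is not a routine verification. For the direction $P_V(1)\ne 0\Rightarrow$ irreducible, you assert that nonvanishing of $P_V(1)$ ``lets one invert the relevant transition,'' but no such transition operator is ever identified and no argument links the scalar $P_V(1)$ to the invertibility of anything; moreover, even a complete proof that $\mathcal{S}u=V$ would only show $V$ is \emph{cyclic} over $\mathcal{S}$, not irreducible --- you would still need that every nonzero $\mathcal{S}$-submodule contains $U_0$. For the converse, the proposed proper submodule (kernel or image of the operator $\sum_i\frac{(-1)^i}{([i]_q^!)^2}(e_1^+)^i(e_0^+)^i$) is not $\mathcal{S}$-invariant on its face, since that operator need not commute with $X_{01}$ or $X_{23}$, and no substitute construction is given. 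Finally, the suggested way to organize the argument --- pass to the ``$q$-geometric tridiagonal pair $(X_{01},X_{23})$'' and its Drinfel'd polynomial --- is circular: condition (iv) of Definition~\ref{def:trid} is precisely the irreducibility in question, so $(X_{01},X_{23})$ is not known to be a tridiagonal pair until the lemma is already proved, and the Drinfel'd polynomial of Section~9 is only defined for tridiagonal pairs. Filling these holes amounts to re-proving \cite[Proposition~12.1]{non}; within this paper the intended proof is simply that citation.
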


\begin{1def}
(See \cite[\rm Definition~1.3]{non}.)
Let $V$ denote a finite-dimensional $U_q^+$-module. This module is called {\it NonNil} whenever the standard generators $x,y$ are not nilpotent on $V$.
\end{1def}

\begin{lem}
\label{lem:64}
\rm
(See \cite[\rm Corollary~2.8]{non}.)
\it
Let $V$ denote a NonNil finite-dimensional irreducible $U_q^+$-module. Then the standard generators $x,y$ are semisimple on $V$. Moreover there exist $d\in\mathbb{N}$ and nonzero scalars $\gamma,\gamma^\prime\in\mathbb{F}$ such that the set of distinct eigenvalues of $x$ (resp. $y$) on $V$ is $\{\gamma q^{d-2i}\}_{i=0}^d$ (resp. $\{\gamma^\prime q^{d-2i}\}_{i=0}^d$).
\end{lem}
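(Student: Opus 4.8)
The statement to prove is Lemma~\ref{lem:64}, which asserts that for a NonNil finite-dimensional irreducible $U_q^+$-module $V$, the standard generators $x,y$ are semisimple with eigenvalues of the prescribed form $\{\gamma q^{d-2i}\}_{i=0}^d$ and $\{\gamma' q^{d-2i}\}_{i=0}^d$.

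The plan is to realize $V$ as the pullback of a finite-dimensional irreducible $U_q(L(\mathfrak{sl}_2))$-module, where the eigenvalue structure of $K_0,K_1$ is already understood from Lemma~\ref{lem:dec}. Since the statement is cited from \cite[Corollary~2.8]{non}, I would first recall the underlying classification there. First I would invoke the defining cubic (Serre-type) relations of $U_q^+$ from Definition~\ref{def:61} to constrain the spectrum of $x$ on $V$. The key observation is that these relations say precisely that for any eigenvector of $x$ with eigenvalue $\theta$, the operator $\mathrm{ad}$-type action of $x$ on $y$ is governed by a polynomial identity whose roots are forced to be ratios of consecutive powers of $q^2$. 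Concretely, the cubic relation $x^3y-[3]_qx^2yx+[3]_qxyx^2-yx^3=0$ factors as $(\mathrm{ad}_qx)^3(y)=0$ in the appropriate $q$-bracket sense, which tells us that the distinct eigenvalues of $x$ on the irreducible module must form a single $q^2$-string.

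Next I would argue semisimplicity. Because $q$ is not a root of unity and $V$ is finite-dimensional and irreducible, the standard approach is to show that $x$ cannot have a nontrivial Jordan block: if $x$ had a generalized eigenspace that were not an eigenspace, the cubic relations together with irreducibility would force either $x$ or $y$ to be nilpotent, contradicting the NonNil hypothesis. So the NonNil assumption is exactly what rules out the degenerate (nilpotent) behavior and lets us conclude that $x$ (and by the symmetric relation, $y$) is semisimple. Once semisimplicity is in hand, the $q^2$-string structure from the previous step yields that the distinct eigenvalues of $x$ are $\gamma q^{d-2i}$ for $0\le i\le d$ and some nonzero $\gamma$, with $d$ the number of steps in the string; the identical argument applied to $y$ via the second cubic relation gives the eigenvalues $\gamma' q^{d-2i}$. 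The common integer $d$ arises because $x$ and $y$ generate the whole algebra acting irreducibly on $V$, so the two strings must have equal length.

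The main obstacle I anticipate is establishing that the two strings have the \emph{same} length $d$ and, more delicately, that the eigenvalue set is genuinely an unbroken string $\{\gamma q^{d-2i}\}_{i=0}^d$ rather than a sparse subset of such a string. This requires a careful analysis showing that each weight space is nonzero, which typically proceeds by constructing raising and lowering maps between the eigenspaces of $x$ (built from $y$ and the $q$-commutator) and verifying, using irreducibility, that these maps connect all consecutive weight spaces. Since the cited source \cite{non} develops this through the equivalence with tridiagonal pairs, I would lean on that framework: one shows the pair $x,y$ acts on $V$ essentially as a tridiagonal pair, and the known eigenvalue structure of tridiagonal pairs (a two-sided $q^2$-geometric progression) delivers the result directly. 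The remaining verification — that the parameters $\gamma,\gamma'$ are nonzero and that the progressions have the stated form — is then routine given that $q$ is not a root of unity.
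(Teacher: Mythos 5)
The paper does not actually prove this lemma: it is imported verbatim from \cite[Corollary~2.8]{non}, so what your proposal must be measured against is the logical structure of that source and of the present paper. Measured that way, your proposal has a genuine circularity problem. Your opening plan --- realize $V$ as a pullback of a finite-dimensional irreducible $U_q(L(\mathfrak{sl}_2))$-module and read off the spectrum from Lemma~\ref{lem:dec} --- invokes Theorem~\ref{thm:68}, but that bijection concerns only NonNil modules of type $(1,1)$, and ``type'' is defined (in the definition immediately following Lemma~\ref{lem:64}) by means of the very eigenvalue structure you are trying to establish; a general NonNil module is not covered until one already knows its eigenvalues form such strings. Moreover, in \cite{non} the classification theorems (Theorems~1.6, 1.7 there) are themselves established by way of the tridiagonal-pair structure theory, not the reverse. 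The same objection applies to your closing move: the ``equivalence with tridiagonal pairs'' you propose to lean on is \cite[Theorem~2.7]{non}, of which Corollary~2.8 is an immediate consequence, so at that point you are assuming exactly what is to be proved.

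Two further steps are asserted rather than argued, and one of them is stated incorrectly. First, semisimplicity: your dichotomy (on a finite-dimensional irreducible module, a non-semisimple $x$ would have to be nilpotent) is true, but you give no mechanism, and the correct mechanism is not about Jordan blocks. By Lemma~\ref{lem:81}, the cubic relation gives $yV_x(\theta)\subseteq V_x(q^2\theta)+V_x(\theta)+V_x(q^{-2}\theta)$ for every nonzero $\theta$, so the span $W$ of the eigenspaces of $x$ with nonzero eigenvalues is invariant under both $x$ and $y$; since $\mathbb{F}$ is algebraically closed and $x$ is not nilpotent, $W\ne 0$; irreducibility then forces $W=V$, so $x$ is semisimple and invertible. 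Second, your final claim that ``the known eigenvalue structure of tridiagonal pairs (a two-sided $q^2$-geometric progression) delivers the result directly'' is false for general tridiagonal pairs: their eigenvalues satisfy only the three-term recurrence of Lemma~\ref{beta} and need not form $q^2$-strings. The string structure must be extracted from the cubic $q$-Serre relations themselves (a single $q^{2\mathbb{Z}}$-class of eigenvalues by irreducibility, unbroken by a maximal-run argument with Lemma~\ref{lem:81}); what tridiagonal-pair theory genuinely supplies is the one point you flagged as an obstacle, namely that the strings for $x$ and for $y$ have the same length $d$, via \cite[Lemma~4.5]{tri} once the four axioms of Definition~\ref{def:trid} have been verified.
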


\begin{1def}
(See \cite[\rm Definition~2.9]{non}.)
Let $V$ denote a NonNil finite-dimensional irreducible $U_q^+$-module. By the {\it diameter} of $V$ we mean the scalar $d$ from Lemma {\ref {lem:64}}. By the {\it type} of $V$ we mean the ordered pair $(\gamma,\gamma^\prime)$ from Lemma {\ref {lem:64}}.
\end{1def}

We will use the following notation.
\begin{1def}
\label{def:not}
Let $\mathcal{A},\mathcal{B}$ denote $\mathbb{F}$-algebras and $\varphi: \mathcal{A}\to \mathcal{B}$ denote an $\mathbb{F}$-algebra homomorphism. Let $\varphi^\sharp$ denote the map that sends each $\mathcal{B}$-module to its pullback via $\varphi$.
\end{1def}

Referring to ({\ref{m1}}) and Definition {\ref {def:not}}, note that
\begin{equation}
\label{k}
(\psi\circ\kappa)^\sharp=\kappa^\sharp\circ\psi^\sharp.
\end{equation}
In the next result we describe $(\psi\circ\kappa)^\sharp$. In Section 8 we describe the maps $\kappa^\sharp, \psi^\sharp$.
\begin{thm}
\label{thm:68}
\rm
(See \cite[\rm Theorems~1.6, 1.7]{non}.)
\it
The map $(\psi\circ\kappa)^\sharp$ gives a bijection between the following two sets:
\begin{enumerate}
\item[\rm(i)] the isomorphism classes of finite-dimensional irreducible $U_q(L(\mathfrak{sl}_2))$-modules $V$ of type $1$ such that $P_V(1)\ne 0$;
\item[\rm(ii)] the isomorphism classes of NonNil finite-dimensional irreducible $U_q^+$-modules of type $(1,1)$.
\end{enumerate}
\end{thm}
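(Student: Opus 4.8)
The plan is to show that $(\psi\circ\kappa)^\sharp$ is a well-defined map from the set (i) into the set (ii), and then that it is a bijection. Recall from the definitions of $\psi$ and $\kappa$ that $\psi\circ\kappa$ sends the standard generators $x,y$ of $U_q^+$ to the equitable generators $X_{01},X_{23}$ of $U_q(L(\mathfrak{sl}_2))$. Thus, for a $U_q(L(\mathfrak{sl}_2))$-module $V$, the pullback $(\psi\circ\kappa)^\sharp V$ is the $U_q^+$-module on which $x$ acts as $X_{01}$ and $y$ acts as $X_{23}$.

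First I would verify that the forward map lands in (ii). Let $V$ be as in (i), with weight space decomposition $\{U_i\}_{i=0}^d$ from Lemma \ref{lem:dec} and type $\gamma=1$. Using the formulas \eqref{42}--\eqref{44} for the equitable generators together with the inclusions $e_0^-U_i\subseteq U_{i-1}$ and $e_1^-U_i\subseteq U_{i+1}$, one sees that $X_{01}$ and $X_{23}$ are block-triangular relative to $\{U_i\}_{i=0}^d$, with diagonal blocks given by the scalar actions of $K_0$ and $K_1$. Hence the eigenvalue sets of $X_{01}$ and $X_{23}$ are each contained in $\{q^{d-2i}\}_{i=0}^d$, a set of nonzero scalars; in particular $X_{01}$ and $X_{23}$ are invertible and therefore not nilpotent, so $(\psi\circ\kappa)^\sharp V$ is NonNil. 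By Lemma \ref{irre}, this $U_q^+$-module is irreducible because $P_V(1)\ne0$. Then Lemma \ref{lem:64} applies: $X_{01}$ and $X_{23}$ are semisimple with eigenvalue sets of the form $\{\gamma q^{d'-2i}\}_{i=0}^{d'}$ and $\{\gamma' q^{d'-2i}\}_{i=0}^{d'}$, and matching these with the computed sets forces $d'=d$ and $\gamma=\gamma'=1$. Thus $(\psi\circ\kappa)^\sharp V$ has type $(1,1)$ and lies in (ii).

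It remains to prove bijectivity, and here the idea is that on $V$ the pair $(X_{01},X_{23})$ acts as a $q$-geometric tridiagonal pair, which determines $V$. For injectivity I would argue that on an irreducible $V$ the remaining equitable generators are recoverable from $X_{01},X_{23}$: the spaces $U_i$ are exactly the components of the split decomposition of the tridiagonal pair $(X_{01},X_{23})$ (this is what the two-term inclusions above express), so $X_{13}$, acting as $q^{d-2i}$ on $U_i$, and $X_{31}=X_{13}^{-1}$ are determined by $X_{01},X_{23}$, and $X_{12},X_{30}$ are then pinned down by the relations of Lemma \ref{lem:lp2}. Consequently any $U_q^+$-module isomorphism $(\psi\circ\kappa)^\sharp V\to(\psi\circ\kappa)^\sharp V'$, which by definition intertwines $X_{01}$ and $X_{23}$, automatically intertwines all six equitable generators and is therefore a $U_q(L(\mathfrak{sl}_2))$-module isomorphism $V\to V'$. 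For surjectivity, given a NonNil irreducible $U_q^+$-module $W$ of type $(1,1)$, the standard generators $x,y$ form a $q$-geometric tridiagonal pair on $W$; I would use its split decomposition to define operators playing the roles of $X_{13},X_{31},X_{12},X_{30}$, verify the relations of Lemma \ref{lem:lp2} to endow $W$ with a $U_q(L(\mathfrak{sl}_2))$-module structure $V$ of type $1$ with $X_{01}=x$ and $X_{23}=y$, and note that $(\psi\circ\kappa)^\sharp V=W$ is irreducible, whence $P_V(1)\ne0$ by Lemma \ref{irre}.

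I expect the main obstacle to be the bijectivity, and within it the reconstruction step: showing that the two operators $X_{01},X_{23}$ determine, and can be completed to, a full $U_q(L(\mathfrak{sl}_2))$-module structure. This is exactly where the structure theory of $q$-geometric tridiagonal pairs (the split decomposition, the scalar action of $X_{13}$ on its components, and the verification of all the equitable relations) does the real work, whereas the well-definedness of the forward map is a direct eigenvalue computation.
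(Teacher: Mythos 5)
There is no in-paper proof to match here: Theorem \ref{thm:68} is imported wholesale from \cite[Theorems~1.6, 1.7]{non} (the parenthetical citation is the entire justification), and the paper's own work in Sections 6--9 builds $\kappa^\sharp$ and $\psi^\sharp$ on top of it. So your proposal amounts to reproving the cited result, and judged as a standalone argument it has genuine gaps, located exactly at the two steps you yourself flag as the ones where ``the real work'' happens -- which means the proposal, as written, defers the theorem back to the literature rather than proving it.

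Concretely: for injectivity, the claim that $X_{12},X_{30}$ are ``pinned down by the relations of Lemma \ref{lem:lp2}'' is unjustified, and it fails at the level of any single relation. Given $X_{31}$, the solutions $T$ of $(qX_{31}T-q^{-1}TX_{31})/(q-q^{-1})=I$ form an affine space $X_{12}+\mathcal{D}$, where $\mathcal{D}$ consists of all $D$ satisfying $X_{31}DX_{31}^{-1}=q^{-2}D$, i.e.\ all maps carrying each weight space $U_i$ into $U_{i-1}$; this space is nonzero whenever the diameter is positive. Only the conjunction of the several relations involving $X_{12}$, together with irreducibility, forces $D=0$, and establishing that is a genuine theorem: it is what the paper proves for the $\square_q$ analog in Lemma \ref{thm:10.3} via the flag machinery of Sections 7--8, and what \cite{non} proves in the $U_q(L(\mathfrak{sl}_2))$ setting. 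Likewise, for surjectivity, ``define operators playing the roles of $X_{13},X_{31},X_{12},X_{30}$ [and] verify the relations'' is precisely the existence half of \cite[Theorem~1.6]{non}: by (\ref{42}), (\ref{44}) the subalgebra generated by $X_{01},X_{23},K_0^{\pm1},K_1^{\pm1}$ is the one generated by $e_0^-,e_1^-,K_0^{\pm1},K_1^{\pm1}$, and by the triangular decomposition of $U_q(L(\mathfrak{sl}_2))$ it does not contain $e_0^+,e_1^+$ (equivalently $X_{30},X_{12}$); so there is no formal expression to ``verify,'' and the operators must actually be constructed from the raising/lowering structure of the tridiagonal pair -- that construction is the substance of the cited theorem. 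Your forward direction, by contrast, is essentially sound, with one loose point: containment of the eigenvalue sets of $X_{01},X_{23}$ in $\{q^{d-2i}\}_{i=0}^d$ does not by itself force type $(1,1)$; you need that block-triangularity with nonzero blocks $U_i$ gives \emph{equality} of eigenvalue sets, after which $d'=d$ and $\gamma=\gamma'=1$ follow because $q$ is not a root of unity.
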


Let $V$ denote a NonNil finite-dimensional irreducible $U_q^+$-module of type $(1,1)$. Via Theorem {\ref{thm:68}}, the vector space $V$ becomes a finite-dimensional irreducible $U_q(L(\mathfrak{sl}_2))$-module of type $1$ such that $P_V(1)\ne 0$.
\begin{1def}
\label{driuq}
Let $V$ denote a NonNil finite-dimensional irreducible $U_q^+$-module of type $(1,1)$. Let $P_V$ (resp. $Q_V$) denote the polynomial from Definition {\ref {def:52}} (resp. Definition {\ref {def:52b}}) associated with the $U_q(L(\mathfrak{sl}_2))$-module $V$ from Theorem {\ref{thm:68}}.
\end{1def}

We now give a detailed description of the NonNil finite-dimensional irreducible $U_q^+$-modules of type $(1,1)$ and diameter $1$. Pick $0\ne a\in\mathbb{F}$. Via Theorem {\ref {thm:68}}, the $U_q(L(\mathfrak{sl}_2))$-module $V(1,a)$ from Lemma {\ref {eg1}} becomes a NonNil $U_q^+$-module.
\begin{lem}
\label{eg3}
The NonNil $U_q^+$-modules
\begin{equation}
\label{evae1}
V(1,a) \qquad \qquad a\in \mathbb{F}\backslash \{0,1\}
\end{equation}
are irreducible. With respect to the basis $v_0,v_1$ from Lemma {\ref {eg1}}, the standard generators $x,y$ are represented by the following matrices:
\begin{gather}
\label{matr1}
x:\,
\begin{bmatrix}
    q^{-1}       & (q-q^{-1})a^{-1} \\
    0       & q
\end{bmatrix},\qquad \quad
y:\,
\begin{bmatrix}
    q       & 0 \\
    q-q^{-1}       & q^{-1}
\end{bmatrix}.
\end{gather}
Moreover $V(1,a)$ has type $(1,1)$ and diameter $1$. Every NonNil finite-dimensional irreducible $U_q^+$-module of type $(1,1)$ and diameter $1$ is isomorphic to exactly one of the modules (\ref {evae1}).
\end{lem}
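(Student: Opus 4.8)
The plan is to verify Lemma~\ref{eg3} by direct computation, leveraging the explicit matrices already established in Lemma~\ref{eg1} and the identifications recorded in the preceding lemmas. First I would address the matrix claim (\ref{matr1}). By Remark~\ref{rmk:6} the standard generators $x,y$ of $U_q^+$ embed into $U_q(L(\mathfrak{sl}_2))$ as $e_0^+,e_1^+$; however, the $U_q^+$-module structure here is the pullback via $\psi\circ\kappa$, not via the map of Remark~\ref{rmk:6}. Under $\kappa$ we have $x\mapsto x_0$ and $y\mapsto x_2$, and under $\psi$ we have $x_0\mapsto X_{01}$ and $x_2\mapsto X_{23}$. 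Hence on $V(1,a)$ the generator $x$ acts as $X_{01}$ and $y$ acts as $X_{23}$. The matrices for $X_{01}$ and $X_{23}$ are given explicitly in Lemma~\ref{lem:qeva}, and they match exactly the two matrices displayed in (\ref{matr1}). So the matrix assertion follows immediately by tracing through the composition $\psi\circ\kappa$ together with Lemma~\ref{lem:qeva}.

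Next I would establish irreducibility and NonNil-ness for $a\in\mathbb{F}\setminus\{0,1\}$. By Lemma~\ref{eg2} we have $P_{V(1,a)}(z)=1-az$, so $P_{V(1,a)}(1)=1-a$, which is nonzero precisely when $a\ne 1$. Therefore Lemma~\ref{irre} tells us that the pullback $U_q^+$-module $V(1,a)$ is irreducible exactly when $a\ne 1$, giving the irreducibility claim on the stated range. For NonNil-ness, I would read off from (\ref{matr1}) that $x$ (as $X_{01}$) and $y$ (as $X_{23}$) each have eigenvalues $q,q^{-1}$, which are nonzero since $q\ne 0$; an operator with a nonzero eigenvalue is not nilpotent, so neither $x$ nor $y$ is nilpotent and the module is NonNil.

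For the type and diameter statement, I would invoke Lemma~\ref{lem:64}: on a NonNil finite-dimensional irreducible $U_q^+$-module the distinct eigenvalues of $x$ form $\{\gamma q^{d-2i}\}_{i=0}^d$ and those of $y$ form $\{\gamma' q^{d-2i}\}_{i=0}^d$. From the matrices in (\ref{matr1}), $x$ has the two distinct eigenvalues $\{q,q^{-1}\}=\{\gamma q^{1-2i}\}_{i=0}^{1}$ with $\gamma=1$, and likewise $y$ has eigenvalues $\{q,q^{-1}\}$ giving $\gamma'=1$; since there are two distinct eigenvalues, $d=1$. Hence $V(1,a)$ has type $(1,1)$ and diameter $1$.

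Finally, for the classification statement I would apply Theorem~\ref{thm:68}, which asserts that $(\psi\circ\kappa)^\sharp$ is a bijection between isomorphism classes of finite-dimensional irreducible $U_q(L(\mathfrak{sl}_2))$-modules of type $1$ with $P_V(1)\ne 0$ and isomorphism classes of NonNil finite-dimensional irreducible $U_q^+$-modules of type $(1,1)$. Restricting to diameter $1$, the source side consists exactly of the modules $V(1,a)$ with $a\ne 0$ and $P_{V(1,a)}(1)=1-a\ne 0$, i.e.\ $a\in\mathbb{F}\setminus\{0,1\}$, and by Lemma~\ref{eg1} these are pairwise nonisomorphic and exhaust the diameter-$1$, type-$1$ modules with nonvanishing $P_V(1)$. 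Since $(\psi\circ\kappa)^\sharp$ is a bijection preserving the correspondence, the images $V(1,a)$ are exactly the NonNil type-$(1,1)$ diameter-$1$ $U_q^+$-modules up to isomorphism, each occurring once. The main thing to be careful about is the bookkeeping of the two different embeddings $U_q^+\to U_q(L(\mathfrak{sl}_2))$ (the one in Remark~\ref{rmk:6} versus $\psi\circ\kappa$), since using the wrong one would give $x\mapsto e_0^+$, $y\mapsto e_1^+$ and hence the wrong matrices; every other step is a routine eigenvalue or polynomial-evaluation check.
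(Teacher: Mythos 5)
Your proposal is correct and follows essentially the same route as the paper: irreducibility via Lemma~\ref{eg2} combined with Lemma~\ref{irre}, the matrices (\ref{matr1}) by tracing the pullback $\psi\circ\kappa$ (i.e.\ $x\mapsto X_{01}$, $y\mapsto X_{23}$) through Lemma~\ref{lem:qeva}, and the classification from Lemma~\ref{eg1} together with the bijection of Theorem~\ref{thm:68}. You simply spell out the eigenvalue checks for NonNil-ness, type, and diameter that the paper leaves implicit in ``the remaining assertions follow.''
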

\begin{proof}
Pick $0\ne a\in\mathbb{F}$. By Lemma {\ref {eg2}} and Lemma {\ref {irre}}, the $U_q^+$-module $V(1,a)$ is irreducible if and only if $a\ne 1$. The matrices in ({\ref {matr1}}) are obtained from Lemma {\ref {lem:qeva}} and Theorem {\ref{thm:68}}. The remaining assertions follow by Lemma {\ref {eg1}} and Theorem {\ref{thm:68}}.
\end{proof}

\begin{lem}
Pick $a\in \mathbb{F}\backslash \{0,1\}$. Let $V$ denote the $U_q^+$-module $V(1,a)$ from Lemma {\ref {eg3}}. Then $P_V(z)=1-az$ and $Q_V(z)=1-a^{-1}z$.
\end{lem}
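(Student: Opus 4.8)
The plan is to reduce the computation to the case of $U_q(L(\mathfrak{sl}_2))$-modules already settled in Lemma \ref{eg2}. The point is that, by Definition \ref{driuq}, the polynomials $P_V$ and $Q_V$ attached to a NonNil $U_q^+$-module are \emph{by definition} the polynomials $P_V,Q_V$ of the $U_q(L(\mathfrak{sl}_2))$-module furnished by Theorem \ref{thm:68}. Hence no genuinely new calculation should be needed; all the content lies in correctly identifying which $U_q(L(\mathfrak{sl}_2))$-module is associated with the $U_q^+$-module $V(1,a)$.

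First I would invoke the proof of Lemma \ref{eg3}, which exhibits the NonNil $U_q^+$-module $V(1,a)$ as the pullback, via $\psi\circ\kappa$, of the $U_q(L(\mathfrak{sl}_2))$-module $V(1,a)$ from Lemma \ref{eg1}. Consequently, under the bijection $(\psi\circ\kappa)^\sharp$ of Theorem \ref{thm:68}, the $U_q(L(\mathfrak{sl}_2))$-module associated with the $U_q^+$-module $V(1,a)$ is the $U_q(L(\mathfrak{sl}_2))$-module $V(1,a)$ itself. With this identification in hand, Definition \ref{driuq} tells us that the polynomials $P_V,Q_V$ for the $U_q^+$-module $V(1,a)$ equal the polynomials $P_V,Q_V$ (from Definitions \ref{def:52} and \ref{def:52b}) for the $U_q(L(\mathfrak{sl}_2))$-module $V(1,a)$.

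Finally I would apply Lemma \ref{eg2} to the $U_q(L(\mathfrak{sl}_2))$-module $V(1,a)$, which gives $P_V(z)=1-az$ and $Q_V(z)=1-a^{-1}z$, completing the argument. The only step requiring any care --- and the closest thing to an obstacle --- is the bookkeeping around Theorem \ref{thm:68}: one must be certain that the $U_q(L(\mathfrak{sl}_2))$-module named in Definition \ref{driuq} is precisely the module of Lemma \ref{eg1}, rather than a twist or relabeling of it. Since this identification is exactly what the proof of Lemma \ref{eg3} records, the result follows at once.
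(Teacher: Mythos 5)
Your proposal is correct and matches the paper's own proof, which simply cites Lemma \ref{eg2} and Definition \ref{driuq}; your additional bookkeeping (tracing through Theorem \ref{thm:68} that the $U_q(L(\mathfrak{sl}_2))$-module associated with the $U_q^+$-module $V(1,a)$ is $V(1,a)$ itself) is exactly the implicit content of that citation, made explicit.
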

\begin{proof}
By Lemma {\ref {eg2}} and Definition {\ref {driuq}}.
\end{proof}

\section{The type of a finite-dimensional irreducible $\square_q$-module}
In Theorem {\ref {thm:68}}, we described the map $(\psi\circ\kappa)^\sharp$. We will describe the maps $\kappa^\sharp, \psi^\sharp$ in Section 9. The main result for $\kappa^\sharp$ (resp. $\psi^\sharp$) is Theorem {\ref {rmk:10.4}} (resp. Theorem {\ref {thm:101}}). In Sections 7, 8 we obtain the results needed to prove the above theorems.

Let $V$ denote a finite-dimensional irreducible $\square_q$-module. Our next goal is to show that each generator $x_i$ of $\square_q$ is semisimple on $V$, and we find its eigenvalues.

We begin with some results concerning the relation ({\ref {equ:11}}).

\begin{lem}
\label{lem:82}
\rm
(See \cite[\rm Lemma~11.2]{qtet}.)
\it
Let $V$ denote a vector space over $\mathbb{F}$ with finite positive dimension. Let $C,D\in {\rm End}(V)$. Then for all nonzero $\theta\in \mathbb{F}$ the following are equivalent:
\begin{enumerate}
\item[\rm(i)] The expression $qCD-q^{-1}DC-(q-q^{-1})I$ vanishes on $V_C(\theta)$;
\item[\rm(ii)] $(D-\theta^{-1}I)V_C(\theta)\subseteq V_C(q^{-2}\theta)$.
\end{enumerate}
\end{lem}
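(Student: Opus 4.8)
The plan is to reduce both conditions to a single vector identity holding for each eigenvector, and then to observe that the two identities coincide. The key point is that $V_C(\theta)$ consists precisely of the vectors $v$ with $Cv=\theta v$, so statements (i) and (ii) are both universally quantified over such $v$. It therefore suffices to fix an arbitrary $v\in V_C(\theta)$ and show that the instance of (i) at $v$ is equivalent to the instance of (ii) at $v$.

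First I would unwind (i). For $v\in V_C(\theta)$ we have $Cv=\theta v$, hence $DCv=\theta Dv$. Substituting into $(qCD-q^{-1}DC-(q-q^{-1})I)v$ gives $qC(Dv)-q^{-1}\theta Dv-(q-q^{-1})v$, so condition (i) at $v$ is equivalent to the identity $qC(Dv)=q^{-1}\theta Dv+(q-q^{-1})v$; call this $(\ast)$. Here $Dv$ need not be an eigenvector of $C$, so I keep the term $C(Dv)$ intact rather than trying to simplify it.

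Next I would unwind (ii). The containment $(D-\theta^{-1}I)V_C(\theta)\subseteq V_C(q^{-2}\theta)$ says, at the vector $v$, exactly that $C(D-\theta^{-1}I)v=q^{-2}\theta(D-\theta^{-1}I)v$. Using $Cv=\theta v$ on the left and expanding the right, this reads $C(Dv)-v=q^{-2}\theta Dv-q^{-2}v$, where the hypothesis $\theta\neq 0$ is what makes $\theta^{-1}$ legitimate. Multiplying both sides by $q$ and rearranging yields precisely $(\ast)$. Since conditions (i) and (ii) at each $v\in V_C(\theta)$ both reduce to $(\ast)$, and $V_C(\theta)$ equals the set of all such $v$, the two conditions on $V_C(\theta)$ are equivalent, as desired.

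I do not expect a genuine obstacle here: the content is a short eigenvalue computation. The only points demanding care are keeping the noncommuting order of $C$ and $D$ straight (one may use $Cv=\theta v$, but not $C(Dv)=\theta Dv$), and tracking the normalizing factors of $q$ and $\theta^{-1}$ so that the two identities match on the nose. The nonvanishing of $\theta$ and $q$ guarantees that every inverse appearing in the argument is well defined.
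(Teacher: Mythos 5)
Your proof is correct: both (i) and (ii) reduce, vector by vector on $V_C(\theta)$, to the single identity $qC(Dv)=q^{-1}\theta Dv+(q-q^{-1})v$, and you correctly track the noncommuting order of $C,D$ and the factors of $q$ and $\theta^{-1}$. The paper gives no argument of its own beyond citing \cite[Lemma~11.2]{qtet}, where the proof is exactly this kind of routine eigenvector computation, so your approach matches the intended one.
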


\begin{lem}
\label{lem:83}
\rm
(See \cite[\rm Lemma~11.3]{qtet}.)
\it
Let $V$ denote a vector space over $\mathbb{F}$ with finite positive dimension. Let $C,D\in {\rm End}(V)$. Then for all nonzero $\theta\in \mathbb{F}$ the following are equivalent:
\begin{enumerate}
\item[\rm(i)] The expression $qCD-q^{-1}DC-(q-q^{-1})I$ vanishes on $V_D(\theta)$;
\item[\rm(ii)] $(C-\theta^{-1}I)V_D(\theta)\subseteq V_D(q^{2}\theta)$.
\end{enumerate}
\end{lem}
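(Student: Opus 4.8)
The plan is to prove Lemma~\ref{lem:83} by reducing it to the already-established Lemma~\ref{lem:82} through a suitable exchange of roles between $C$ and $D$. Observe that the expression
\begin{equation*}
E=qCD-q^{-1}DC-(q-q^{-1})I
\end{equation*}
appearing in both lemmas is not symmetric in $C$ and $D$, but it becomes the analogous expression for the pair $(D,C)$ after a controlled manipulation. Specifically, I would consider the expression $qDC-q^{-1}CD-(q-q^{-1})I$ obtained from $E$ by swapping $C$ and $D$, and relate the two. A direct approach is to multiply $E$ on suitable sides or to rewrite $E$ in terms of the swapped expression; the cleanest route is to note that the condition in part~(i) of Lemma~\ref{lem:83} concerns $E$ vanishing on the eigenspace $V_D(\theta)$ of $D$, which is exactly the situation covered by Lemma~\ref{lem:82} with the roles of $C$ and $D$ interchanged, provided we account for the asymmetry in the $q$ versus $q^{-1}$ coefficients.

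The key steps, in order, are as follows. First I would write out what it means for $E$ to vanish on $V_D(\theta)$: for $v\in V_D(\theta)$ we have $Dv=\theta v$, so $Ev = qCDv - q^{-1}DCv - (q-q^{-1})v = q\theta Cv - q^{-1}DCv - (q-q^{-1})v$, and the condition $Ev=0$ becomes $DCv = q^2\theta Cv - (q^2-1)v$, i.e.\ $D(Cv) = q^2\theta(Cv) - (q^2-1)v$. Second, I would rearrange this to isolate how $D$ acts on the vector $Cv$: writing $w=(C-\theta^{-1}I)v = Cv-\theta^{-1}v$, a short computation shows $Dw = q^2\theta\, w$, which is precisely the statement that $w\in V_D(q^2\theta)$. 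This establishes the implication (i)$\Rightarrow$(ii), and the reverse implication (ii)$\Rightarrow$(i) follows by reversing the same algebra, since each step is an equivalence once we fix $v\in V_D(\theta)$. Because $\theta\ne 0$, the scalar $\theta^{-1}$ is well defined throughout, and no division issues arise.

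I expect the main obstacle to be purely bookkeeping: keeping the powers of $q$ and the sign of the inhomogeneous term $(q^2-1)$ straight when passing from the vanishing condition on $E$ to the containment $(C-\theta^{-1}I)V_D(\theta)\subseteq V_D(q^2\theta)$. The difference between the $q^{-2}\theta$ appearing in Lemma~\ref{lem:82} and the $q^2\theta$ appearing here is exactly what the $C\leftrightarrow D$ swap produces, together with the fact that $E$ carries $q$ on the $CD$ term and $q^{-1}$ on the $DC$ term; I would verify carefully that this asymmetry sends $q^{-2}$ to $q^2$ rather than leaving it unchanged. Once the computation $Dw=q^2\theta w$ is checked line by line, the equivalence of (i) and (ii) is immediate, so the entire proof amounts to a single reversible eigenvector computation on $V_D(\theta)$.
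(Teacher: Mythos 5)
Your computation is correct and complete: for $v\in V_D(\theta)$ one has $Ev=q\theta Cv-q^{-1}DCv-(q-q^{-1})v$, so $Ev=0$ is equivalent to $DCv=q^{2}\theta Cv-(q^{2}-1)v$, which in turn is equivalent to $Dw=q^{2}\theta w$ for $w=(C-\theta^{-1}I)v$; every step is reversible, so both implications follow at once. Note, however, that the paper offers no proof of this lemma at all: it is quoted from the literature (Lemma~11.3 of the Ito--Terwilliger $q$-tetrahedron paper), so your argument is a self-contained verification rather than a variant of anything in this paper --- which is a worthwhile addition. One caution about your framing in the first paragraph: the literal reduction to Lemma~\ref{lem:82} by swapping $C$ and $D$ does not work as stated, because the expression $qCD-q^{-1}DC-(q-q^{-1})I$ is not symmetric in $C,D$; the honest reduction applies Lemma~\ref{lem:82} to the pair $(D,C)$ with $q$ replaced by $q^{-1}$, which negates the expression (harmless for a vanishing condition) and converts the target eigenspace $V(q^{-2}\theta)$ into $V(q^{2}\theta)$. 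Your hedge about accounting for the $q$ versus $q^{-1}$ asymmetry gestures at exactly this, but you never carry that reduction out; fortunately your actual proof is the direct eigenvector computation, which stands on its own and is arguably the cleaner route.
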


\begin{lem}
\label{lem:84}
\rm
(See \cite[\rm Lemma~11.4]{qtet}.)
\it
Let $V$ denote a vector space over $\mathbb{F}$ with finite positive dimension. Let $C,D\in {\rm End}(V)$ such that
\begin{gather*}
\frac{qCD-q^{-1}DC}{q-q^{-1}}=I.
\end{gather*}
Then for all nonzero $\theta\in \mathbb{F}$,
\begin{gather*}
\sum_{n=0}^{\infty}V_C(q^{-2n}\theta)=\sum_{n=0}^{\infty}V_D(q^{2n}\theta^{-1}).
\end{gather*}
\end{lem}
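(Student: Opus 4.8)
The plan is to prove Lemma~\ref{lem:84} by a symmetric double-inclusion argument, using Lemmas~\ref{lem:82} and~\ref{lem:83} as the engine. The relation $\tfrac{qCD-q^{-1}DC}{q-q^{-1}}=I$ holds on all of $V$, so in particular the expression $qCD-q^{-1}DC-(q-q^{-1})I$ vanishes on $V_C(\theta)$ and on $V_D(\theta)$ for every nonzero $\theta$. Thus both Lemma~\ref{lem:82}(ii) and Lemma~\ref{lem:83}(ii) are available for all choices of eigenvalue. Write $W=\sum_{n=0}^{\infty}V_C(q^{-2n}\theta)$ and $W'=\sum_{n=0}^{\infty}V_D(q^{2n}\theta^{-1})$; the goal is $W=W'$.

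First I would show $W\subseteq W'$. The key observation is that $W$ is $D$-invariant. Indeed, by Lemma~\ref{lem:82}(ii) applied with eigenvalue $q^{-2n}\theta$, we have $(D-q^{2n}\theta^{-1}I)V_C(q^{-2n}\theta)\subseteq V_C(q^{-2n-2}\theta)$, so $DV_C(q^{-2n}\theta)\subseteq V_C(q^{-2n}\theta)+V_C(q^{-2n-2}\theta)\subseteq W$; summing over $n$ gives $DW\subseteq W$. Since $q$ is not a root of unity, the scalars $q^{-2n}\theta$ are distinct, so the sum defining $W$ is direct and $W$ is a finite-dimensional $D$-invariant subspace on which $D$ acts. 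On $W$, the computation $(D-q^{2n}\theta^{-1}I)$ shifts $V_C(q^{-2n}\theta)$ down the chain, and the bottom space $V_C(q^{-2N}\theta)$ for the smallest relevant $N$ is annihilated, which forces the eigenvalues of $D$ restricted to $W$ to lie among $\{q^{2n}\theta^{-1}\}_{n\ge 0}$. Hence every $D$-eigenvector in $W$ lies in some $V_D(q^{2n}\theta^{-1})$, and since $W$ is spanned by $D$-eigenvectors (as $D$ is semisimple on $W$, which follows because the relation forces $D$ to have the stated triangular shift structure, or more simply because $W\subseteq V$ and one argues $D$ acts semisimply here) we conclude $W\subseteq W'$.

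By the symmetry between $C$ and $D$ in the two lemmas—Lemma~\ref{lem:83}(ii) gives $(C-q^{-2n}\theta I)V_D(q^{2n}\theta^{-1})\subseteq V_D(q^{2n+2}\theta^{-1})$, which is the mirror image with the roles of $C,D$ and the exponents swapped—the identical argument yields $W'\subseteq W$. Combining the two inclusions gives $W=W'$, as desired.

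The main obstacle will be the semisimplicity step: justifying that $D$ acts semisimply on the invariant subspace $W$ so that $W$ is actually spanned by its $D$-eigenspaces $V_D(q^{2n}\theta^{-1})$, rather than merely having its $D$-eigenvalues among those scalars. The cleanest route is to avoid invoking semisimplicity of $D$ directly and instead argue by a clean induction on the length of the chain: take $N$ minimal with $V_C(q^{-2N}\theta)\ne 0$, observe that the lowest term injects into $V_D(q^{2N}\theta^{-1})$ via the annihilation identity, and then peel off one eigenspace at a time, at each stage using Lemma~\ref{lem:82}(ii) to relate the $C$-eigenspace to a $D$-eigenspace one step up. This inductive bookkeeping, tracking exactly which power of $q$ labels each space, is where care is required; once the indexing is pinned down, both inclusions are formally identical under the $C\leftrightarrow D$ symmetry.
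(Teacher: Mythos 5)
The paper does not actually prove this lemma; it simply cites \cite[Lemma~11.4]{qtet}, so your proposal has to stand on its own. Its skeleton is the standard one and is sound: both inclusions come from the shift relations of Lemmas~\ref{lem:82} and \ref{lem:83}, and the two directions are mirror images under swapping $C$ and $D$. Your preliminary steps are all correct: the defining relation holds on all of $V$, so both lemmas apply at every eigenvalue; $W$ is $D$-invariant; the sum defining $W$ is direct because the scalars $q^{-2n}\theta$ are mutually distinct; and the mirror identity $(C-q^{-2n}\theta I)V_D(q^{2n}\theta^{-1})\subseteq V_D(q^{2n+2}\theta^{-1})$ is the right engine for the reverse inclusion.

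There are, however, two genuine defects in the execution. First, an index error that would make your induction fail as stated: you need $N$ \emph{maximal} with $V_C(q^{-2N}\theta)\ne 0$ (it exists since $\dim V<\infty$ and the scalars $q^{-2n}\theta$ are distinct), not minimal. The annihilation step $(D-q^{2N}\theta^{-1}I)V_C(q^{-2N}\theta)\subseteq V_C(q^{-2N-2}\theta)=0$ requires the \emph{next} $C$-eigenspace in the chain to vanish, which happens at the terminal end of the chain; with $N$ minimal nothing is annihilated. Second, the crux of the first inclusion --- that $W$ is spanned by $D$-eigenvectors --- is asserted rather than proved: ``one argues $D$ acts semisimply here'' is circular, since semisimplicity of $D|_W$ with eigenvalues among $\{q^{2n}\theta^{-1}\}$ is essentially the statement $W\subseteq W'$ that you are proving. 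The gap is closable, and you should close it explicitly, in either of two ways. (i) Minimal polynomial: set $U_m=\sum_{k\ge m}V_C(q^{-2k}\theta)$; Lemma~\ref{lem:82} gives $(D-q^{2m}\theta^{-1}I)U_m\subseteq U_{m+1}$, so the product $\prod_{n=0}^{N}(D-q^{2n}\theta^{-1}I)$ (commuting factors, the $n=0$ factor applied first) maps $W=U_0$ into $U_{N+1}=0$; hence the minimal polynomial of $D|_W$ divides the squarefree polynomial $\prod_{n=0}^{N}(x-q^{2n}\theta^{-1})$, so $D|_W$ is semisimple with eigenvalues among those scalars, giving $W\subseteq W'$. (ii) Peeling, done with the bookkeeping you allude to: prove by downward induction on $n$ the \emph{indexed} statement $V_C(q^{-2n}\theta)\subseteq \sum_{j\ge n}V_D(q^{2j}\theta^{-1})$; in the inductive step, for $v\in V_C(q^{-2n}\theta)$ write $(D-q^{2n}\theta^{-1}I)v=\sum_{j>n}u_j$ with $u_j\in V_D(q^{2j}\theta^{-1})$, put $w=\sum_{j>n}\theta u_j/(q^{2n}-q^{2j})$ (well defined since $q$ is not a root of unity), and check that $v+w\in V_D(q^{2n}\theta^{-1})$, whence $v\in W'$. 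The indexed hypothesis matters: it guarantees that no component $u_j$ with $j=n$ occurs, which is exactly what makes the correction solvable. With either completion, your $C\leftrightarrow D$ symmetry argument then finishes the proof.
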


\begin{lem}
\label{lem:85}
\rm
(See \cite[\rm Lemma~11.5]{qtet}.)
\it
With the notation and assumptions of Lemma {\ref {lem:84}},
\begin{gather*}
\dim V_C(\theta)=\dim V_D(\theta^{-1}).
\end{gather*}
\end{lem}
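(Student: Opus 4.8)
The plan is to work inside the subspace
\[
W=\sum_{n=0}^{\infty}V_C(q^{-2n}\theta)=\sum_{n=0}^{\infty}V_D(q^{2n}\theta^{-1}),
\]
where the displayed equality is exactly the conclusion of Lemma~\ref{lem:84}. First I would observe that, because $q$ is not a root of unity, the scalars $q^{-2n}\theta$ $(n\ge 0)$ are mutually distinct, so the first sum is direct; likewise the second sum is direct. Thus $W$ is both $C$-invariant and $D$-invariant, and it carries two eigenspace decompositions, one diagonalizing $C$ and one diagonalizing $D$. The whole argument will consist of comparing these two decompositions through the single operator $C|_W$.

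Next I would record the triangularity furnished by Lemmas~\ref{lem:82} and~\ref{lem:83}. Since the defining relation holds on all of $V$, the expression $qCD-q^{-1}DC-(q-q^{-1})I$ vanishes on $V_C(\phi)$ and on $V_D(\phi)$ for every nonzero $\phi$. Applying Lemma~\ref{lem:82} with $\phi=q^{-2n}\theta$ gives $(D-\phi^{-1}I)V_C(\phi)\subseteq V_C(q^{-2}\phi)$, and applying Lemma~\ref{lem:83} with $\phi=q^{2m}\theta^{-1}$ gives $(C-\phi^{-1}I)V_D(\phi)\subseteq V_D(q^{2}\phi)$. In particular $C$ sends $V_D(q^{2m}\theta^{-1})$ into $V_D(q^{2m}\theta^{-1})\oplus V_D(q^{2(m+1)}\theta^{-1})$, with ``diagonal part'' the scalar $q^{-2m}\theta$ on the summand $V_D(q^{2m}\theta^{-1})$.

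The heart of the argument is then a characteristic-polynomial comparison for $C|_W$. On one hand $C|_W$ is semisimple, being diagonalized by the $C$-eigenspace decomposition, so its characteristic polynomial equals $\prod_{n}(z-q^{-2n}\theta)^{\dim V_C(q^{-2n}\theta)}$. On the other hand, ordering the $D$-eigenspaces by increasing $m$ produces a $C$-invariant filtration of $W$ on whose $m$-th graded piece $C$ acts as the scalar $q^{-2m}\theta$; by the block-triangular determinant formula the characteristic polynomial of $C|_W$ therefore also equals $\prod_{m}(z-q^{-2m}\theta)^{\dim V_D(q^{2m}\theta^{-1})}$. Since the scalars $q^{-2n}\theta$ are distinct, matching the multiplicity of the factor $(z-q^{-2n}\theta)$ in the two expressions yields $\dim V_C(q^{-2n}\theta)=\dim V_D(q^{2n}\theta^{-1})$ for every $n\ge 0$; taking $n=0$ gives $\dim V_C(\theta)=\dim V_D(\theta^{-1})$, as required.

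I expect the main obstacle to be the bookkeeping that turns the triangular containments of Lemmas~\ref{lem:82} and~\ref{lem:83} into a genuine statement about the characteristic polynomial: one must verify that $C$ preserves the filtration built from the $D$-grading and that the induced map on each graded piece is precisely multiplication by $q^{-2m}\theta$, so that the standard factorization of the characteristic polynomial over the graded pieces applies. Everything else, namely the directness of the two sums and the semisimplicity of $C|_W$, is immediate from the hypothesis that $q$ is not a root of unity.
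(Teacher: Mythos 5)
Your argument is correct, and it is worth noting that the paper itself gives no proof of Lemma~\ref{lem:85} at all: it simply cites \cite{qtet}, so your write-up supplies a self-contained argument (from Lemmas~\ref{lem:82}, \ref{lem:83}, \ref{lem:84}) where the paper defers to the literature. One small orientation slip: the filtration built from the $D$-eigenspaces ordered by \emph{increasing} $m$, namely $G_k=\sum_{m\le k}V_D(q^{2m}\theta^{-1})$, is \emph{not} $C$-invariant, since by Lemma~\ref{lem:83} the image $CV_D(q^{2k}\theta^{-1})$ can stick out into $V_D(q^{2k+2}\theta^{-1})$; the $C$-invariant filtration is the one by tails, $F_k=\sum_{m\ge k}V_D(q^{2m}\theta^{-1})$, whose graded piece $F_k/F_{k+1}$ is identified with $V_D(q^{2k}\theta^{-1})$ and on which $C$ acts as the scalar $q^{-2k}\theta$. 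Equivalently, in a basis adapted to the $D$-decomposition with blocks ordered by increasing $m$, the matrix of $C|_W$ is block lower triangular with diagonal blocks $q^{-2m}\theta I$. Either way, the block-triangular determinant formula gives your second factorization of the characteristic polynomial, and the rest of your argument (matching multiplicities of the pairwise distinct roots $q^{-2n}\theta$, which uses that $q$ is not a root of unity and $\theta\ne 0$) goes through; indeed you prove the stronger statement $\dim V_C(q^{-2n}\theta)=\dim V_D(q^{2n}\theta^{-1})$ for all $n\ge 0$.

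For comparison, there is a shorter route that needs only Lemma~\ref{lem:84} itself. Since that lemma holds for \emph{every} nonzero scalar, apply it with $\theta$ replaced by $q^{-2i}\theta$ to get $\sum_{n\ge i}V_C(q^{-2n}\theta)=\sum_{n\ge i}V_D(q^{2n}\theta^{-1})$ for each $i\ge 0$. Both sides are direct sums of eigenspaces for pairwise distinct eigenvalues, so taking dimensions and subtracting the equation for $i+1$ from the equation for $i$ yields $\dim V_C(q^{-2i}\theta)=\dim V_D(q^{2i}\theta^{-1})$; the case $i=0$ is the lemma. This telescoping argument buys brevity and avoids the characteristic-polynomial bookkeeping, whereas your proof has the virtue of exhibiting concretely how $C$ interlaces the two eigenspace decompositions of $W$, which is the structural picture used elsewhere in the paper (e.g.\ in Theorem~\ref{thm:91}).
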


\begin{lem}
\label{lem:86}
Let $V$ denote a finite-dimensional irreducible $\square_q$-module. Then for all nonzero $\theta\in \mathbb{F}$ we have
\begin{gather*}
\dim V_{x_0}(\theta)=\dim V_{x_1}(\theta^{-1})=\dim V_{x_2}(\theta)=\dim V_{x_3}(\theta^{-1}).
\end{gather*}
\end{lem}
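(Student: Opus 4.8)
The plan is to exploit the cyclic symmetry relating the four generators together with the symmetry-breaking result of Lemma \ref{lem:85}. First I would use the automorphism $\rho$ from Lemma \ref{lem:rho}. Since $\rho$ sends $x_i\mapsto x_{i+1}$, twisting $V$ via a power of $\rho$ turns an eigenvector of $x_{i+1}$ into an eigenvector of $x_i$ with the same eigenvalue. More precisely, for the twisted module $^{\rho}V$ the action of $x_i$ equals the action of $\rho(x_i)=x_{i+1}$ on the original $V$, so $(^{\rho}V)_{x_i}(\theta)$ coincides, as a subspace of the underlying vector space, with $V_{x_{i+1}}(\theta)$. In particular $\dim(^{\rho}V)_{x_i}(\theta)=\dim V_{x_{i+1}}(\theta)$. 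The twisted module $^{\rho}V$ is again a finite-dimensional irreducible $\square_q$-module (pullback via an automorphism preserves irreducibility), so any eigenspace-dimension identity that holds for every such module automatically transports between consecutive generators.

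Next I would apply Lemma \ref{lem:85} to the consecutive pairs. For each $i\in\mathbb{Z}_4$ the defining relation (\ref{equ:11}) says exactly that
\begin{gather*}
\frac{qx_ix_{i+1}-q^{-1}x_{i+1}x_i}{q-q^{-1}}=I
\end{gather*}
holds on $V$, so Lemma \ref{lem:85} applies with $C=x_i$ and $D=x_{i+1}$, giving $\dim V_{x_i}(\theta)=\dim V_{x_{i+1}}(\theta^{-1})$ for all nonzero $\theta\in\mathbb{F}$. Taking $i=0$ yields $\dim V_{x_0}(\theta)=\dim V_{x_1}(\theta^{-1})$, which is the first asserted equality. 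Taking $i=1$ and replacing $\theta$ by $\theta^{-1}$ gives $\dim V_{x_1}(\theta^{-1})=\dim V_{x_2}(\theta)$, the second equality. Taking $i=2$ gives $\dim V_{x_2}(\theta)=\dim V_{x_3}(\theta^{-1})$, the third equality. Chaining these together produces the full string
\begin{gather*}
\dim V_{x_0}(\theta)=\dim V_{x_1}(\theta^{-1})=\dim V_{x_2}(\theta)=\dim V_{x_3}(\theta^{-1}).
\end{gather*}

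The only point requiring care is that Lemma \ref{lem:85} is stated for a single pair $C,D$ satisfying the relation, so I must verify that relation (\ref{equ:11}) does indeed hold on $V$ for each index $i$ and that $V$ has finite positive dimension, both of which are immediate from the hypotheses and Definition \ref{def:box}. There is no genuine obstacle here; the argument is essentially bookkeeping over the four relations, and the cyclic structure guarantees that each consecutive pair contributes exactly one link of the chain. I expect the subtlest step to be keeping the inversions $\theta\leftrightarrow\theta^{-1}$ straight when stitching the four individual identities into one line, since the alternating pattern $\theta,\theta^{-1},\theta,\theta^{-1}$ must match the parity of the index. Invoking $\rho$ is an optional alternative route, but the direct four-fold application of Lemma \ref{lem:85} is cleaner and self-contained, so that is the version I would write up.
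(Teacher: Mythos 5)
Your proposal is correct and follows essentially the same route as the paper: the paper's proof also applies Lemma \ref{lem:85} to the consecutive pairs $x_0\to x_1\to x_2\to x_3$ furnished by relation (\ref{equ:11}) and chains the resulting equalities, with the same bookkeeping of $\theta\leftrightarrow\theta^{-1}$. The preliminary discussion of the automorphism $\rho$ is harmless but, as you note yourself, unnecessary.
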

\begin{proof}
By (\ref {equ:11}) we have
\begin{gather}
x_0\to x_1\to x_2\to x_3 \label{equ:81}
\end{gather}
where $r\to s$ means
\begin{gather*}
\frac{qrs-q^{-1}sr}{q-q^{-1}}=1.
\end{gather*}
Applying Lemma {\ref {lem:85}} to each arrow in (\ref {equ:81}) we obtain the result.
\end{proof}

We mention a result concerning the relation ({\ref {equ:12}}).
\begin{lem}
\label{lem:81}
\rm
(See \cite[\rm Lemma~11.1]{qtet}.)
\it
Let $V$ denote a finite-dimensional vector space over $\mathbb{F}$. Let $C,D\in {\rm End}(V)$. Then for all nonzero $\theta\in \mathbb{F}$ the following are equivalent:
\begin{enumerate}
\item[\rm(i)] The expression $C^3D-[3]_qC^2DC+[3]_qCDC^2-DC^3$ vanishes on $V_C(\theta)$;
\item[\rm(ii)] $DV_C(\theta)\subseteq V_C(q^2\theta)+V_C(\theta)+V_C(q^{-2}\theta)$.
\end{enumerate}
\end{lem}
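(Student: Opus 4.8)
The plan is to collapse both conditions into a single statement about the kernel of a polynomial in $C$, exploiting the fact that on the eigenspace $V_C(\theta)$ every copy of $C$ sitting to the right of $D$ acts as the scalar $\theta$. First I would set
\[
E := C^3D-[3]_qC^2DC+[3]_qCDC^2-DC^3,
\qquad
p(x):=x^3-[3]_q\theta x^2+[3]_q\theta^2 x-\theta^3 .
\]
Using the identity $[3]_q=q^2+1+q^{-2}$, I would verify the factorization
\[
p(x)=(x-\theta)(x-q^2\theta)(x-q^{-2}\theta),
\]
which is a direct expansion. Then, for $v\in V_C(\theta)$ so that $Cv=\theta v$, each term of $E$ has its trailing powers of $C$ replaced by the corresponding power of $\theta$, giving
\[
Ev=\bigl(C^3-[3]_q\theta C^2+[3]_q\theta^2 C-\theta^3 I\bigr)Dv=p(C)\,Dv=(C-\theta I)(C-q^2\theta I)(C-q^{-2}\theta I)\,Dv .
\]
Hence condition (i), that $E$ vanishes on $V_C(\theta)$, is equivalent to $Dv\in\ker p(C)$ for every $v\in V_C(\theta)$, i.e. to $DV_C(\theta)\subseteq\ker p(C)$.

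Next I would identify $\ker p(C)$ as a sum of eigenspaces. Since $\theta\neq 0$ and $q$ is not a root of unity, we have $q^2\neq 1$ and $q^4\neq 1$, so the three scalars $\theta,q^2\theta,q^{-2}\theta$ are pairwise distinct and $p$ has distinct roots. For any $C\in{\rm End}(V)$ and any polynomial with distinct roots $\lambda_1,\dots,\lambda_k$, the kernel of $\prod_i(C-\lambda_i I)$ is the direct sum $\bigoplus_i V_C(\lambda_i)$; this follows from the partial-fraction (Lagrange/Chinese Remainder) decomposition $1=\sum_i c_i\prod_{j\neq i}(x-\lambda_j)$, which projects any kernel vector onto honest $\lambda_i$-eigenvectors. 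Applying this with $\{\lambda_i\}=\{\theta,q^2\theta,q^{-2}\theta\}$ yields
\[
\ker p(C)=V_C(\theta)+V_C(q^2\theta)+V_C(q^{-2}\theta).
\]
Combining with the previous paragraph, (i) holds if and only if $DV_C(\theta)\subseteq V_C(q^2\theta)+V_C(\theta)+V_C(q^{-2}\theta)$, which is exactly (ii).

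The factorization of $p$ and the scalar-substitution computation of $Ev$ are routine, so I expect the genuine crux to be the eigenspace identification of $\ker p(C)$, and in particular the distinctness of the roots $\theta,q^2\theta,q^{-2}\theta$. This is the only place the hypothesis on $q$ enters, and it is what permits replacing $\ker p(C)$ by a true sum of eigenspaces without any semisimplicity assumption on $C$; were two roots to coincide one would be forced into generalized eigenspaces and the clean equivalence with (ii) would fail.
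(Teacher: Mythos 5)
Your proof is correct: the factorization $p(x)=(x-\theta)(x-q^2\theta)(x-q^{-2}\theta)$ via $[3]_q=q^2+1+q^{-2}$, the substitution of trailing powers of $C$ by powers of $\theta$ on $V_C(\theta)$, and the identification $\ker p(C)=V_C(\theta)\oplus V_C(q^2\theta)\oplus V_C(q^{-2}\theta)$ (valid because $\theta\ne 0$ and $q$ not a root of unity make the three roots distinct) together give exactly the stated equivalence. The paper itself offers no argument but defers to \cite[Lemma~11.1]{qtet}, and the proof there is essentially this same computation, so your proposal matches the intended route.
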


\begin{lem}
\label{lem:88}
Let $V$ denote a finite-dimensional irreducible $\square_q$-module. Then each generator $x_i$ of $\square_q$ is semisimple on $V$. Moreover there exist $d\in \mathbb{N}$ and nonzero $\gamma\in\mathbb{F}$ with the following property:
\begin{enumerate}
\item[\rm(i)] for $x_0$ and $x_2$, the set of distinct eigenvalues on $V$ is $\{\gamma q^{d-2i}\}_{i=0}^{d}$;
\item[\rm(ii)] for $x_1$ and $x_3$, the set of distinct eigenvalues on $V$ is $\{\gamma^{-1}q^{d-2i}\}_{i=0}^{d}$.
\end{enumerate}
\end{lem}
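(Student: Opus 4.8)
The plan is to pin down the action of $x_0$ completely and then transport the conclusion to the other generators using the dimension equalities of Lemma~\ref{lem:86} and the rotational automorphism $\rho$ of Lemma~\ref{lem:rho}. Throughout, the defining relations \eqref{equ:11} and \eqref{equ:12} hold as operator identities on $V$, so the eigenspace lemmas apply verbatim with the appropriate $C,D$. Taking $(C,D)=(x_0,x_1)$ in \eqref{equ:11} with $i=0$, Lemma~\ref{lem:82} shows $x_1$ moves the $x_0$-eigenspaces down by at most one $q^2$-step; taking $(C,D)=(x_3,x_0)$ in \eqref{equ:11} with $i=3$, Lemma~\ref{lem:83} shows $x_3$ moves them up by at most one step; and taking $(C,D)=(x_0,x_2)$ in \eqref{equ:12} with $i=0$, Lemma~\ref{lem:81} shows $x_2$ moves them by at most one step in either direction.

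First I would show $x_0$ is invertible. Writing \eqref{equ:11} with $i=0$ as $x_1x_0=q^2x_0x_1-(q^2-1)I$ and using that $x_0x_1$ and $x_1x_0$ have the same characteristic polynomial, the eigenvalue multiset of $x_0x_1$ is invariant under the affine map $\lambda\mapsto q^2\lambda-(q^2-1)$. Since $q$ is not a root of unity this map has the unique fixed point $1$ and all other orbits are infinite, so a finite invariant multiset must be concentrated at $1$; hence $x_0x_1$ has characteristic polynomial $(\lambda-1)^{\dim V}$ and is invertible, forcing $x_0$ to be invertible. In particular $x_0$ has a nonzero eigenvalue $\theta_0$.

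Next I would establish semisimplicity and the single-string structure by irreducibility. The subspace $S=\sum_{n\in\mathbb{Z}}V_{x_0}(q^{2n}\theta_0)$ is invariant under $x_0$ (trivially), under $x_1,x_3$ (Lemmas~\ref{lem:82}, \ref{lem:83}), and under $x_2$ (Lemma~\ref{lem:81}); as it is nonzero it equals $V$, so $x_0$ is semisimple and every eigenvalue lies in the single coset $\{q^{2n}\theta_0\}_{n\in\mathbb{Z}}$. Let $\theta_{\max}$ be the top eigenvalue, so $V_{x_0}(q^2\theta_{\max})=0$, and let $m$ be maximal with $\theta_{\max},q^{-2}\theta_{\max},\dots,q^{-2m}\theta_{\max}$ all eigenvalues. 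The main obstacle will be ruling out gaps in the string, i.e.\ showing that $q^{-2(m+1)}\theta_{\max}$ being a non-eigenvalue already stops the string. I would do this by verifying that $W=\bigoplus_{i=0}^{m}V_{x_0}(q^{-2i}\theta_{\max})$ is again $\square_q$-invariant: the only nontrivial points are the two ends, where the would-be out-of-range eigenspaces $V_{x_0}(q^2\theta_{\max})$ and $V_{x_0}(q^{-2(m+1)}\theta_{\max})$ vanish, so the one-step displacements produced by $x_1,x_2,x_3$ stay inside $W$. By irreducibility $W=V$, hence the eigenvalues of $x_0$ are exactly $\{q^{-2i}\theta_{\max}\}_{i=0}^{m}$; setting $d=m$ and $\gamma=q^{-d}\theta_{\max}$ rewrites this set as $\{\gamma q^{d-2i}\}_{i=0}^{d}$.

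Finally I would pass to the remaining generators. Applying $\rho$ of Lemma~\ref{lem:rho} to the argument just given shows each $x_i$ is semisimple and invertible. By Lemma~\ref{lem:86}, $\dim V_{x_2}(\theta)=\dim V_{x_0}(\theta)$ for all nonzero $\theta$, so $x_2$ has the same eigenvalue set $\{\gamma q^{d-2i}\}_{i=0}^{d}$ as $x_0$, giving (i). Likewise $\dim V_{x_1}(\theta^{-1})=\dim V_{x_3}(\theta^{-1})=\dim V_{x_0}(\theta)$ shows the eigenvalues of $x_1$ and $x_3$ are the inverses $\{(\gamma q^{d-2i})^{-1}\}_{i=0}^{d}=\{\gamma^{-1}q^{d-2i}\}_{i=0}^{d}$, giving (ii) with the same $d$ and type $\gamma^{-1}$.
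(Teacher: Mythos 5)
Your proof is correct, and its core engine is the same as the paper's: produce a maximal unbroken $q^2$-string of $x_0$-eigenspaces, show that its sum is invariant under $x_1,x_3,x_2$ via Lemmas \ref{lem:82}, \ref{lem:83}, \ref{lem:81} (the out-of-range eigenspaces at both ends vanish by maximality), conclude from irreducibility that this sum is all of $V$, and then transfer the eigenvalue data to the other generators via Lemma \ref{lem:86}. The one genuine difference is the invertibility of $x_0$, which is needed to produce a nonzero eigenvalue to start the string: the paper simply cites \cite[Proposition~5.2]{y1}, whereas you prove it on the spot from the observation that $x_0x_1$ and $x_1x_0=q^2x_0x_1-(q^2-1)I$ have the same characteristic polynomial, so the eigenvalue multiset of $x_0x_1$ is invariant under the affine map $\lambda\mapsto q^2\lambda-(q^2-1)$ and, being finite with $q$ not a root of unity, must be concentrated at the unique fixed point $1$; this makes the lemma self-contained at essentially no extra cost. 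Two of your steps are redundant, though harmlessly so: the intermediate submodule $S=\sum_{n\in\mathbb{Z}}V_{x_0}(q^{2n}\theta_0)$ is subsumed by the later argument for $W$, which by itself yields semisimplicity, the single coset, and the absence of gaps (this shorter route is what the paper takes); and the appeal to $\rho$ at the end is unnecessary, since Lemma \ref{lem:86} together with the semisimplicity of $x_0$ already forces each of $x_1,x_2,x_3$ to be semisimple with the stated eigenvalues, because their nonzero eigenspace dimensions already sum to $\dim V$.
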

\begin{proof}
Since $\mathbb{F}$ is algebraically closed and $V$ has finite positive dimension, there exists $c\in\mathbb{F}$ such that $V_{x_0}(c)\ne 0$. By \cite[\rm Proposition~5.2]{y1}, the action of $x_0$ is invertible on $V$. Therefore $0$ is not an eigenvalue of $x_0$ on $V$, so $c\ne 0$. By this and since $q$ is not a root of $1$, the scalars $c,q^2c,q^4c,\dots$ are mutually distinct. Consequently these scalars can not all be eigenvalues for $x_0$ on $V$. Therefore there exists a nonzero $\tau\in\mathbb{F}$ such that $V_{x_0}(\tau)\ne 0$ and $V_{x_0}(q^2\tau)= 0$. Moreover there exists $d\in\mathbb{N}$ such that $V_{x_0}(q^{-2n}\tau)$ is nonzero for $0\le n\le d$ and zero for $n=d+1$. We will show that
\begin{equation}
V_{x_0}(\tau)+V_{x_0}(q^{-2}\tau)+\dots+V_{x_0}(q^{-2d}\tau) \label{equ:82}
\end{equation}
is equal to $V$. Our strategy is to show that the subspace (\ref {equ:82}) is a $\square_q$-submodule of $V$. By construction (\ref {equ:82}) is invariant under $x_0$. By Lemma {\ref {lem:82}} and since $V_{x_0}(q^{-2d-2}\tau)=0$ the subspace (\ref {equ:82}) is invariant under $x_1$. By Lemma {\ref {lem:83}} and since $V_{x_0}(q^{2}\tau)=0$ the subspace (\ref {equ:82}) is invariant under $x_3$. By Lemma {\ref {lem:81}}, and since each of $V_{x_0}(q^{2}\tau), V_{x_0}(q^{-2d-2}\tau)$ is zero, the subspace (\ref {equ:82}) is invariant under $x_2$. We have now shown that (\ref {equ:82}) is invariant under each generator $x_i$ of $\square_q$, so (\ref {equ:82}) is a $\square_q$-submodule of $V$. Each term in (\ref {equ:82}) is nonzero and there is at least one term so (\ref {equ:82}) is nonzero. By these comments and since the $\square_q$-module $V$ is irreducible we find (\ref {equ:82}) is equal to $V$. Let $\gamma=\tau q^{-d}$. Observe that $\gamma\ne0$. By the above comments, the action of $x_0$ on $V$ is semisimple with eigenvalues $\{\gamma q^{d-2n}\}_{n=0}^d$. The remaining assertions follow in view of Lemma {\ref {lem:86}}.
\end{proof}

\begin{1def}
Let $V$ denote a finite-dimensional irreducible $\square_q$-module. By the {\it diameter} of $V$ we mean the scalar $d$ from Lemma {\ref {lem:88}}. By the {\it type} of $V$ we mean the scalar $\gamma$ from Lemma {\ref {lem:88}}.
\end{1def}

\begin{note}
By the comments below Lemma {\ref {lem:gam}}, we focus on the finite-dimensional irreducible $\square_q$-modules of type $1$.
\end{note}

\begin{1def}
\label{def:812}
Let $V$ denote a finite-dimensional irreducible $\square_q$-module of type $1$ and diameter $d$. For each generator $x_i$ of $\square_q$ we define a decomposition of $V$ which we call $[i,i+1]$. The decomposition $[i,i+1]$ has diameter $d$. For $0\le n\le d$ the $n$th component of $[i,i+1]$ is the eigenspace of $x_i$ on $V$ associated with the eigenvalue $q^{d-2n}$. Let $[i+1,i]$ denote the inversion of the decomposition $[i,i+1]$.
\end{1def}

\begin{lem}
\label{lem:813}
Let $V$ denote a finite-dimensional irreducible $\square_q$-module of type $1$. Then the shapes of the following decompositions of $V$ coincide:
\begin{gather*}
[0,1],\qquad [2,1],\qquad [2,3],\qquad [0,3].
\end{gather*}
\end{lem}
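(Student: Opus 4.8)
The plan is to unwind the four decompositions into explicit eigenspaces of the generators $x_0,x_1,x_2,x_3$ and then read off their shapes directly from Lemma~\ref{lem:86}. Since $V$ has type $1$, Lemma~\ref{lem:88} tells us that each $x_i$ is semisimple on $V$ with set of distinct eigenvalues exactly $\{q^{d-2n}\}_{n=0}^{d}$, where $d$ is the diameter. Hence every component appearing in Definition~\ref{def:812} is of the form $V_{x_i}(q^{d-2n})$, and the decompositions in question are genuine decompositions into eigenspaces. I fix $n$ with $0\le n\le d$ and compare the $n$th components.

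First I would record the $n$th component of each of the four decompositions. By Definition~\ref{def:812}, the $n$th component of $[0,1]$ is $V_{x_0}(q^{d-2n})$ and the $n$th component of $[2,3]$ is $V_{x_2}(q^{d-2n})$. The decompositions $[2,1]$ and $[0,3]$ are inversions: $[2,1]$ is the inversion of $[1,2]$ (which is associated with $x_1$) and $[0,3]$ is the inversion of $[3,0]$ (which is associated with $x_3$). Inversion sends the $n$th component to the $(d-n)$th component of the original, so the $n$th component of $[2,1]$ is $V_{x_1}(q^{d-2(d-n)})=V_{x_1}(q^{2n-d})$ and the $n$th component of $[0,3]$ is $V_{x_3}(q^{2n-d})$. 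Keeping these two inversions straight, so that the index shift $q^{d-2n}\mapsto q^{2n-d}=(q^{d-2n})^{-1}$ lands correctly, is the only place where any care is needed.

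With the components in hand, the shapes coincide term by term. I would apply Lemma~\ref{lem:86} with the nonzero scalar $\theta=q^{d-2n}$, noting that $\theta^{-1}=q^{2n-d}$, to obtain
\[
\dim V_{x_0}(q^{d-2n})=\dim V_{x_1}(q^{2n-d})=\dim V_{x_2}(q^{d-2n})=\dim V_{x_3}(q^{2n-d}).
\]
These four quantities are precisely the dimensions of the $n$th components of $[0,1]$, $[2,1]$, $[2,3]$, $[0,3]$, respectively. Since this identity holds for every $n$ with $0\le n\le d$, the four decompositions have equal shape, which is the assertion. I expect no serious obstacle here: the entire argument reduces to a careful bookkeeping of eigenvalues under inversion together with a single invocation of Lemma~\ref{lem:86}.
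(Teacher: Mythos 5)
Your proposal is correct and is exactly the paper's argument: the paper's proof is the single line ``By Lemma~\ref{lem:86},'' and your write-up simply spells out the bookkeeping (identifying the $n$th components of $[0,1]$, $[2,1]$, $[2,3]$, $[0,3]$ as $V_{x_0}(q^{d-2n})$, $V_{x_1}(q^{2n-d})$, $V_{x_2}(q^{d-2n})$, $V_{x_3}(q^{2n-d})$ and invoking Lemma~\ref{lem:86} with $\theta=q^{d-2n}$) that the paper leaves implicit. The inversion index shift is handled correctly, so there is nothing to fix.
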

\begin{proof}
By Lemma {\ref {lem:86}}.
\end{proof}

\section{Flags on finite-dimensional irreducible $\square_q$-modules}
Let $V$ denote a finite-dimensional irreducible $\square_q$-module of type $1$. In this section we discuss some flags on $V$. We begin our discussion by recalling the notion of a flag.

Let $V$ denote a vector space over $\mathbb{F}$ with finite positive dimension. Let $\{s_i\}_{i=0}^d$ denote a sequence of positive integers whose sum is the dimension of $V$. By a {\it flag} on $V$ of {\it shape} $\{s_i\}_{i=0}^d$ we mean a nested sequence $U_0\subseteq U_1\subseteq \dots \subseteq U_d$ of subspaces of $V$ such that the dimension of $U_i$ is $s_0+s_1+\dots+s_i$ for $0\le i\le d$. Observe that $U_d=V$. We call $U_i$ the $i$th {\it component} of the flag. We call $d$ the {\it diameter} of the flag.

The following construction yields a flag on $V$. Let $\{V_i\}_{i=0}^d$ denote a decomposition of $V$ with shape $\{s_i\}_{i=0}^d$. Define
\begin{gather*}
U_i=V_0+V_1+\dots + V_i \qquad (0\le i\le d).
\end{gather*}
Then the sequence $U_0\subseteq U_1\subseteq \dots \subseteq U_d$ is a flag on $V$ of shape $\{s_i\}_{i=0}^d$. We say this flag is {\it induced} by the decomposition $\{V_i\}_{i=0}^d$.

We now recall what it means for two flags to be {\it opposite}. Suppose we are given two flags on $V$ with the same diameter: $U_0\subseteq U_1\subseteq \dots \subseteq U_d$ and $U_0^\prime\subseteq U_1^\prime\subseteq \dots \subseteq U_d^\prime$. We say these flags are {\it opposite} whenever there exists a decomposition $\{V_i\}_{i=0}^d$ of $V$ that induces $U_0\subseteq U_1\subseteq \dots \subseteq U_d$ and its inversion $\{V_{d-i}\}_{i=0}^d$ induces $U_0^\prime\subseteq U_1^\prime\subseteq \dots \subseteq U_d^\prime$. In this case
\begin{gather*}
U_i\cap U_j^\prime=0\qquad (0\le i,j\le d,\quad i+j<d)
\end{gather*}
and
\begin{gather}
V_n=U_n\cap U_{d-n}^\prime \qquad (0\le n\le d). \label{equ:92}
\end{gather}
The decomposition $\{V_i\}_{i=0}^d$ is uniquely determined by the given flags.

We now bring in the finite-dimensional irreducible $\square_q$-modules.

\begin{thm}
\label{thm:91}
Let $V$ denote a finite-dimensional irreducible $\square_q$-module of type $1$. Then there exists a collection of flags on $V$, denoted $[h],\, h\in\mathbb{Z}_4$, that have the following property: for each generator $x_i$ of $\square_q$ the decomposition $[i,i+1]$ of $V$ induces $[i]$ and its inversion $[i+1,i]$ induces $[i+1]$.
\end{thm}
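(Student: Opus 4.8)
The plan is to define each flag $[h]$ directly from an eigenspace decomposition and then reduce the compatibility requirement to Lemma~\ref{lem:84}. For $i\in\mathbb{Z}_4$ and $0\le n\le d$ write $W^{(i)}_n=V_{x_i}(q^{d-2n})$; by Lemma~\ref{lem:88} (and since $V$ has type $1$, so that all four generators have eigenvalues $\{q^{d-2n}\}_{n=0}^{d}$) each $W^{(i)}_n$ is nonzero, and by Definition~\ref{def:812} the $n$th component of the decomposition $[i,i+1]$ is precisely $W^{(i)}_n$. For each $h\in\mathbb{Z}_4$ I would define $[h]$ to be the flag induced by the decomposition $[h,h+1]$; since every component is nonzero this is a genuine flag, and by construction $[h,h+1]$ induces $[h]$. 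The content of the theorem then reduces to one assertion: for every $i\in\mathbb{Z}_4$ the inversion $[i+1,i]$ of $[i,i+1]$ also induces $[i+1]$. Equivalently, the flag induced by $[i+1,i]$ must coincide with the flag $[i+1]$ induced by $[i+1,i+2]$.

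To prove this coincidence I would compare the two flags component by component. The $k$th component of the flag induced by the inversion $[i+1,i]$ is the sum of the top $k+1$ components of $[i,i+1]$, namely $\sum_{m=d-k}^{d}W^{(i)}_m$, while the $k$th component of the flag induced by $[i+1,i+2]$ is $\sum_{n=0}^{k}W^{(i+1)}_n$. I would then invoke Lemma~\ref{lem:84} with $C=x_i$ and $D=x_{i+1}$ (these satisfy the hypothesis of that lemma by relation~\eqref{equ:11}) and with $\theta=q^{2k-d}$. Tracking the eigenvalues, the left-hand sum $\sum_{n\ge 0}V_{x_i}(q^{-2n}\theta)$ collects exactly the eigenspaces $W^{(i)}_m$ with $m\ge d-k$, and the right-hand sum $\sum_{n\ge 0}V_{x_{i+1}}(q^{2n}\theta^{-1})$ collects exactly the eigenspaces $W^{(i+1)}_n$ with $n\le k$. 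Thus Lemma~\ref{lem:84} yields $\sum_{m=d-k}^{d}W^{(i)}_m=\sum_{n=0}^{k}W^{(i+1)}_n$, which is the desired equality of $k$th components, for every $k$.

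Finally I would confirm that the two inducing decompositions have a common shape, so that this equality of subspaces really is an equality of flags. By Lemma~\ref{lem:86} one has $\dim W^{(i+1)}_m=\dim W^{(i)}_{d-m}$, so the shape of $[i+1,i+2]$ is the reverse of the shape of $[i,i+1]$, which is exactly the shape of the inversion $[i+1,i]$; hence the component dimensions match and the verification is complete. The only real obstacle is the index bookkeeping in the application of Lemma~\ref{lem:84}: one must select $\theta$ correctly and convert between the eigenvalue-indexed sums appearing in that lemma and the component-indexed partial sums defining the flags. This becomes routine once the correspondence between the summation index $n$ and the component label $m=d-k+n$ is written out explicitly, and no deeper input beyond Lemmas~\ref{lem:84}, \ref{lem:86}, and~\ref{lem:88} is required.
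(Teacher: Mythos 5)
Your proposal is correct and takes essentially the same approach as the paper: the paper also defines each flag $[h]$ directly from one of the decompositions (it uses $[h,h-1]$ rather than your $[h,h+1]$, an immaterial change of convention) and then reduces the whole theorem to Lemma~\ref{lem:84} applied to the consecutive pair $C=x_{h-1}$, $D=x_h$, exactly as you do with $C=x_i$, $D=x_{i+1}$. The eigenvalue bookkeeping you spell out with $\theta=q^{2k-d}$, together with the appeal to Lemmas~\ref{lem:86} and~\ref{lem:88}, is precisely what the paper's two-line proof leaves implicit.
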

\begin{proof}
For all $h\in\mathbb{Z}_4$ let $[h]$ denote the flag on $V$ induced by the decomposition $[h,h-1]$. By Lemma {\ref {lem:84}} (with $C=x_{h-1}$ and $D=x_h$) the flag on $V$ induced by $[h,h+1]$ is equal to $[h]$. The result follows.
\end{proof}

\begin{thm}
\label{thm:92}
Let $V$ denote a finite-dimensional irreducible $\square_q$-module of type $1$. Then for $i\in\mathbb{Z}_4$ the flags $[i],[i+1]$ are opposite.
\end{thm}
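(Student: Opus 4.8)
The plan is to read off the conclusion directly from Theorem~\ref{thm:91} together with the definition of opposite flags recalled above. Recall that two flags on $V$ of common diameter are declared opposite precisely when there is a single decomposition of $V$ that induces the first flag while its inversion induces the second. Thus the entire task reduces to exhibiting, for each $i\in\mathbb{Z}_4$, one decomposition of $V$ that simultaneously induces $[i]$ and whose inversion induces $[i+1]$.

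To do this I would fix $i\in\mathbb{Z}_4$ and take the decomposition $[i,i+1]$ associated with the generator $x_i$ in Definition~\ref{def:812}. By Theorem~\ref{thm:91} this decomposition induces the flag $[i]$. Moreover the inversion of $[i,i+1]$ is, by Definition~\ref{def:812}, exactly the decomposition $[i+1,i]$, and Theorem~\ref{thm:91} asserts that $[i+1,i]$ induces the flag $[i+1]$. Hence $[i,i+1]$ is a single decomposition inducing $[i]$ whose inversion induces $[i+1]$, which is precisely the condition for $[i]$ and $[i+1]$ to be opposite. I would also note in passing that the two flags share the common diameter $d$ required by the definition, since every decomposition $[i,i+1]$ has diameter $d$ (the diameter of $V$), so each flag $[h]$ likewise has diameter $d$.

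There is no genuine obstacle here: once Theorem~\ref{thm:91} is in place, the statement is immediate from the definition of opposite, because Theorem~\ref{thm:91} was formulated exactly so that a single decomposition $[i,i+1]$ and its inversion $[i+1,i]$ produce the two consecutive flags $[i]$ and $[i+1]$. The only point requiring care is the bookkeeping identification of the inversion of $[i,i+1]$ with $[i+1,i]$, which is guaranteed by Definition~\ref{def:812}.
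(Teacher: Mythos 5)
Your proof is correct and is essentially identical to the paper's own argument: both invoke Theorem~\ref{thm:91} to see that the decomposition $[i,i+1]$ induces $[i]$ while its inversion $[i+1,i]$ induces $[i+1]$, which is exactly the definition of opposite flags. The extra bookkeeping you include (common diameter, identification of the inversion with $[i+1,i]$) is fine but adds nothing beyond the paper's two-line proof.
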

\begin{proof}
We invoke Theorem {\ref {thm:91}}. The flags $[i],[i+1]$ are opposite since the decomposition $[i,i+1]$ induces $[i]$ and its inversion $[i+1,i]$ induces $[i+1]$.
\end{proof}

\begin{note}
Theorem {\ref {thm:92}} can be strengthened as follows: the flags $[i],\, i\in\mathbb{Z}_4$ are mutually opposite. This follows from Theorem {\ref {thm:eta}} and \cite[\rm Theorem~16.3]{qtet}. We don't discuss the details since we do not need this fact.
\end{note}

\begin{thm}
\label{thm:93}
Let $V$ denote a finite-dimensional irreducible $\square_q$-module of type $1$ and diameter $d$. Pick a generator $x_i$ of $\square_q$ and consider the corresponding decomposition $[i,i+1]$ of $V$ from Definition {\ref {def:812}}. For $0\le n\le d$ the $n$th component of $[i,i+1]$ is the intersection of the following two sets:
\begin{enumerate}
\item[\rm(i)] component $n$ of the flag $[i]$;
\item[\rm(ii)] component $d-n$ of the flag $[i+1]$;.
\end{enumerate}
\end{thm}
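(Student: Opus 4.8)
The plan is to recognize this statement as a direct instance of the opposite-flags formula (\ref{equ:92}) established in the preliminaries, so that essentially no new computation is required. First I would set the flag $U_0\subseteq U_1\subseteq\dots\subseteq U_d$ equal to $[i]$ and the flag $U_0^\prime\subseteq U_1^\prime\subseteq\dots\subseteq U_d^\prime$ equal to $[i+1]$. By Theorem {\ref {thm:91}}, the decomposition $[i,i+1]$ of $V$ induces the flag $[i]$, while its inversion $[i+1,i]$ induces the flag $[i+1]$. Thus $[i,i+1]$ is precisely a decomposition that induces $[i]$ and whose inversion induces $[i+1]$, which is exactly the configuration appearing in the definition of opposite flags. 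By Theorem {\ref {thm:92}} the flags $[i],[i+1]$ are indeed opposite, and by the uniqueness clause in the preliminaries the decomposition realizing this opposition is $[i,i+1]$ itself.

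With these identifications in place, formula (\ref{equ:92}) reads $V_n=U_n\cap U_{d-n}^\prime$ for $0\le n\le d$, where $\{V_n\}_{n=0}^d$ is the decomposition realizing the opposition, namely $[i,i+1]$. Unwinding the identifications, $V_n$ is the $n$th component of $[i,i+1]$, the set $U_n$ is component $n$ of the flag $[i]$, and the set $U_{d-n}^\prime$ is component $d-n$ of the flag $[i+1]$. This is exactly the asserted equality, so the result follows.

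The only point that requires care is the bookkeeping: one must verify which flag plays the role of $U$ versus $U^\prime$ and keep straight the pairing of the index $n$ with the index $d-n$. This is pinned down by the direction of the induced flag in Theorem {\ref {thm:91}}, specifically that $[i,i+1]$ induces $[i]$ (and its inversion induces $[i+1]$), rather than the reverse. Once that orientation is fixed, all the analytic content is already supplied by Theorems {\ref {thm:91}} and {\ref {thm:92}}, and the statement is immediate from (\ref{equ:92}).
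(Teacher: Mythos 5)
Your proposal is correct and matches the paper's own proof, which simply cites Theorem \ref{thm:91} and formula (\ref{equ:92}); your write-up just spells out the identification of $[i]$, $[i+1]$ with the flags $U$, $U'$ and the role of the uniqueness of the realizing decomposition, which the paper leaves implicit.
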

\begin{proof}
By Theorem {\ref {thm:91}} and (\ref {equ:92}).
\end{proof}

\section{The maps $\kappa^\sharp, \psi^\sharp$}
In Theorem {\ref {thm:68}} we described the map $(\psi\circ\kappa)^\sharp$. In this section we describe the maps $\kappa^\sharp, \psi^\sharp$. We first show that $\kappa^\sharp$ gives a bijection between the isomorphism classes of finite-dimensional irreducible $\square_q$-modules of type $1$ and the isomorphism classes of NonNil finite-dimensional irreducible $U_q^+$-modules of type $(1,1)$. Then we show that the map $\psi^\sharp$ gives a bijection between the isomorphism classes of finite-dimensional irreducible $U_q(L(\mathfrak{sl}_2))$-modules $V$ of type $1$ such that $P_V(1)\ne 0$ and the isomorphism classes of finite-dimensional irreducible $\square_q$-modules of type $1$. After establishing these results, we define the Drinfel'd polynomial of a finite-dimensional irreducible $\square_q$-module of type $1$ and prove Proposition \ref{thm:36}.

\begin{pro}
\label{pro:10.1}
Let $V$ denote a finite-dimensional irreducible $\square_q$-module of type $1$. Let $W$ denote a nonzero subspace of $V$ such that $x_0W\subseteq W$ and $x_2W\subseteq W$. Then $W=V$.
\end{pro}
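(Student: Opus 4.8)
The plan is to prove that $W$ is a $\square_q$-submodule of $V$; since the $\square_q$-module $V$ is irreducible and $W\neq 0$, this forces $W=V$. As $x_0W\subseteq W$ and $x_2W\subseteq W$ are given, the whole problem reduces to showing $x_1W\subseteq W$ and $x_3W\subseteq W$. I would first record the relevant local structure of the action. By \cite[\rm Proposition~5.2]{y1} the generator $x_0$ is invertible on $V$, and by Lemma~\ref{lem:88} it is semisimple with eigenvalues $\{q^{d-2n}\}_{n=0}^{d}$. Setting $V_n=V_{x_0}(q^{d-2n})$ we get $V=\bigoplus_{n=0}^{d}V_n$, and since $W$ is $x_0$-invariant we get $W=\bigoplus_{n=0}^{d}W_n$ with $W_n=W\cap V_n$; note that $x_0^{-1}$ acts as a scalar on each $V_n$ and so preserves $W$. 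Applying Lemma~\ref{lem:82} to relation~\eqref{equ:11} with $i=0$ shows $(x_1-x_0^{-1})V_n\subseteq V_{n+1}$, and applying Lemma~\ref{lem:83} to~\eqref{equ:11} with $i=3$ shows $(x_3-x_0^{-1})V_n\subseteq V_{n-1}$. Thus $x_1=x_0^{-1}+R$ and $x_3=x_0^{-1}+S$, where $R$ strictly raises and $S$ strictly lowers the grading index; since $x_0^{-1}$ preserves $W$, the goal becomes $RW\subseteq W$ and $SW\subseteq W$.

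Two further facts come for free. By Lemma~\ref{lem:81} applied to~\eqref{equ:12} with $i=0$, the map $x_2$ is tridiagonal with respect to the grading $\{V_n\}$; writing $x_2$ as the sum of its three graded components and using that $W$ is graded and $x_2$-invariant, each graded component of $x_2$ maps $W$ into $W$. Likewise, since $W$ is $x_2$-invariant and $x_2$ is semisimple by Lemma~\ref{lem:88}, $W$ is the direct sum of its intersections with the $x_2$-eigenspaces, i.e.\ $W$ respects the decomposition $[2,3]$ of Definition~\ref{def:812}. At this point I would pass to the flag picture of the preceding section. Since $W$ respects the $x_0$-eigenspace decomposition $[0,1]$ it is compatible with the induced flags $[0]$ and $[1]$, and since it respects $[2,3]$ it is compatible with $[2]$ and $[3]$. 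The plan is to show that compatibility with the $x_0$- and $x_2$-decompositions propagates to the $x_1$- and $x_3$-decompositions $[1,2]$ and $[3,0]$: by Theorem~\ref{thm:93} each component of $[1,2]$ is the intersection of a component of the flag $[1]$ with a component of the opposite flag $[2]$ (Theorem~\ref{thm:92}), and I would combine this with the raising/lowering actions of $R$ and $S$ to locate $RW$ and $SW$ back inside $W$. Once $x_1W\subseteq W$ and $x_3W\subseteq W$ are in hand, $W$ is a $\square_q$-submodule and hence equals $V$.

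The main obstacle is exactly this propagation step, and I do not expect it to follow from the generator relations alone. Expanding~\eqref{equ:11} with $i=1$ by grading degree couples $R$ with the \emph{diagonal} component of $x_2$ rather than expressing $R$ through the components already known to preserve $W$, so one cannot simply solve for $R$. Moreover the only relation tying $x_1$ to $x_3$ is the cubic relation~\eqref{equ:12}, so a naive ``generate and stabilize'' argument fails: one checks readily that $\widetilde W=\sum_{j\ge 0}x_1^{\,j}W$ is invariant under $x_0,x_1,x_2$ (commute $x_0$ and $x_2$ past powers of $x_1$ using the two-term relations from~\eqref{equ:11}), but there is no elementary way to continue this build-up through $x_3$, nor to force $\widetilde W=W$, i.e.\ $RW\subseteq W$. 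What makes the statement true is the transversality encoded in the configuration of the four mutually opposite flags $[0],[1],[2],[3]$: an $x_0,x_2$-invariant subspace is compelled to be compatible with the $x_1$- and $x_3$-eigenspace decompositions. Turning that geometric fact into a rigorous argument is where I expect the real work of the proof to lie.
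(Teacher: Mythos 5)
Your setup is sound (the grading by $x_0$-eigenspaces, the raising/lowering maps $R=x_1-x_0^{-1}$ and $S=x_3-x_0^{-1}$, the observation that $W$ is graded with respect to both decompositions $[0,1]$ and $[2,3]$ of Definition~\ref{def:812}), and your negative analysis is accurate: neither solving the relations degree-by-degree nor the subspace $\sum_{j\ge 0}x_1^{\,j}W$ leads anywhere. But the proposal stops exactly where the proof must begin. You name the propagation step as the goal, state that you expect it not to follow from the relations alone, and defer "the real work" without carrying it out. As written, this is a proof plan with the central step missing, not a proof.

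The paper closes the gap with two ideas absent from your proposal. First, it reduces without loss to the case where $W$ is irreducible as a module for $x_0,x_2$ (a nonzero invariant subspace contains an irreducible one, and proving the conclusion for that one suffices since the conclusion is $W=V$). Second, instead of showing directly that $W$ respects the decompositions $[1,2]$ and $[3,0]$, it defines $W'$ to be the span of all vectors in $W$ that are eigenvectors for $x_r$ ($r=1$ or $3$) and proves: (a) $W'\ne 0$, and (b) $W'$ is invariant under $x_0$ and $x_2$; irreducibility of $W$ then forces $W'=W$, i.e.\ $x_rW\subseteq W$. For (a), let $m$ (resp.\ $m'$) be the least index with $W\cap V_m\ne 0$ (resp.\ $W\cap V'_{m'}\ne 0$), where $\{V_n\}$, $\{V'_n\}$ are the decompositions $[0,1]$, $[2,3]$. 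Gradedness places $W$ inside component $d-m$ of the flag $[1]$, while $W$ meets component $m'$ of the flag $[2]$ nontrivially; since these two flags are opposite (Theorem~\ref{thm:92}), this forces $m\le m'$, and applying the same argument to $^{\rho^2}V$ (which swaps the roles of $x_0$ and $x_2$) gives $m'\le m$, hence $m=m'$. Theorem~\ref{thm:93} then identifies the intersection of component $d-m$ of $[1]$ with component $m$ of $[2]$ as a single eigenspace of $x_1$, and the nonzero space $W\cap V'_m$ lies inside it — this is precisely your "transversality," but note that only consecutive opposite pairs of flags are needed, not all four mutually opposite. For (b), if $v\in W$ is an $x_r$-eigenvector with eigenvalue $\theta$ (nonzero by Lemma~\ref{lem:88}), then $(x_0-\theta^{-1}I)v\in W$ because $W$ is $x_0$-invariant, and Lemma~\ref{lem:82} or \ref{lem:83} places $(x_0-\theta^{-1}I)v$ in another eigenspace of $x_r$, hence in $W'$; so $x_0W'\subseteq W'$, and similarly $x_2W'\subseteq W'$. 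Note that without the initial irreducibility reduction there is no way to conclude $W'=W$, which is why that seemingly innocuous first step is essential; your framework, which keeps $W$ arbitrary throughout, cannot execute this bootstrap.
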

\begin{proof}
Without loss we may assume that $W$ is irreducible as a module for $x_0,x_2$. Let
$\{V_i\}_{i=0}^d$ denote the decomposition $[0,1]$ of $V$ and let $\{V_i^\prime\}_{i=0}^d$ denote the the decomposition $[2,3]$ of $V$. Recall that $x_0$ (resp. $x_2$) is semisimple on $V$ with eigenspaces $\{V_i\}_{i=0}^d$ (resp. $\{V_i^\prime\}_{i=0}^d$). By this and since $W$ is invariant under each of $x_0,x_2$ we find
\begin{gather}
W=\sum_{n=0}^d W\cap V_n, \qquad\qquad W=\sum_{n=0}^d W\cap V_n^\prime .\label{equ:10.1}
\end{gather}
Define
\begin{gather}
m=\min\{n\mid 0\le n\le d,\quad W\cap V_n\ne 0\},\label{equ:10.2}\\
m^\prime=\min\{n\mid 0\le n\le d,\quad W\cap V_n^\prime \ne 0\}.\nonumber
\end{gather}
We claim that $m=m^\prime$. To prove this claim, we first show that $m\le m^\prime$. Suppose $m>m^\prime$. By (\ref {equ:10.2}) and the equation on the left in (\ref {equ:10.1}), the space $W$ is contained in component $d-m$ of the flag $[1]$. By construction $W$ has nonzero intersection with component $m^\prime$ of the flag $[2]$. By Theorem {\ref {thm:92}} (with $i=1$) and $m>m^\prime$, the component $d-m$ of $[1]$ has zero intersection with component $m^\prime$ of $[2]$, which is a contradiction. Therefore $m\le m^\prime$. Applying the above argument to $^{\rho^2}V$ we obtain $m^\prime\le m$. We have shown $m\le m^\prime\le m$. Therefore the claim is proved. The claim implies that the component $d-m$ of the flag $[1]$ contains $W$, the component $d-m$ of the flag $[3]$ contains $W$, the component $m$ of the flag $[0]$ has nonzero intersection with $W$, and the component $m$ of the flag $[2]$ has nonzero intersection with $W$. We can now easily show $W=V$. Since the $\square_q$-module $V$ is irreducible, it suffices to show that $W$ is invariant under each generator $x_i$ of $\square_q$. By construction $W$ is invariant under $x_0$ and $x_2$. Let $x_r$ denote one of $x_1,x_3$. We show $x_rW\subseteq W$. Let $W^\prime$ denote the span of the set of vectors in $W$ that are eigenvectors for $x_r$. By construction $W^\prime\subseteq W$ and $x_rW^\prime\subseteq W^\prime$. We show $W^\prime=W$. To this end we show that $W^\prime$ is nonzero and invariant under each of $x_0,x_2$. We now show $W^\prime\ne 0$. By Theorem {\ref {thm:93}} the intersection of the component $d-m$ of the flag $[1]$ and the component $m$ of the flag $[2]$ is equal to component $d-m$ of the decomposition $[1,2]$ which is an eigenspace for $x_1$. By this and the comment after the preliminary claim we have $W^\prime\ne 0$ for $x_r=x_1$. Similarly we show $W^\prime\ne 0$ for $x_r=x_3$. We now show $x_0W^\prime\subseteq W^\prime$. To this end we pick $v\in W^\prime$ and show $x_0v\in W^\prime$. Without loss we may assume that $v$ is an eigenvector for $x_r$; let $\theta$ denote the corresponding eigenvalue. By Lemma {\ref {lem:88}} we have $\theta\ne 0$. Recall $v\in W^\prime$ and $W^\prime\subseteq W$ so $v\in W$. The space $W$ is $x_0$-invariant so $x_0v\in W$. By these comments $(x_0-\theta^{-1}I)v\in W$. By Lemma {\ref {lem:82}} or Lemma {\ref {lem:83}} the vector $(x_0-\theta^{-1}I)v$ is contained in an eigenspace of $x_r$, so $(x_0-\theta^{-1}I)v\in W^\prime$. By this and since $v\in W^\prime$ we have $x_0v\in W^\prime$. We have shown $x_0W^\prime\subseteq W^\prime$. Similarly we show $x_2W^\prime\subseteq W^\prime$. So far we have shown that $W^\prime$ is nonzero and invariant under each of $x_0,x_2$. By the irreducibility of $W$ we have $W^\prime=W$, so $x_rW\subseteq W$. It follows that $W$ is $\square_q$-invariant. The space $V$ is irreducible as a $\square_q$-module so $W=V$.
\end{proof}
Recall the map $\kappa$ from Lemma {\ref {lem:2}}. Let $V$ denote a finite-dimensional irreducible $\square_q$-module of type $1$. Pulling back the $\square_q$-module structure on $V$ via $\kappa$, we turn $V$ into a $U_q^+$-module.
\begin{lem}
\label{thm:10.2}
The above $U_q^+$-module $V$ is NonNil and irreducible of type $(1,1)$.
\end{lem}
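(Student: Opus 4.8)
The plan is to read off all three assertions from results already in hand, with the genuine content residing in Proposition~\ref{pro:10.1}. Recall that $\kappa$ sends $x\mapsto x_0$ and $y\mapsto x_2$, so on the pulled-back module $V$ the standard generator $x$ acts as $x_0$ and $y$ acts as $x_2$. Consequently a subspace $W\subseteq V$ is a $U_q^+$-submodule if and only if $x_0W\subseteq W$ and $x_2W\subseteq W$. Thus to prove irreducibility I would take a nonzero $U_q^+$-submodule $W$, observe that it satisfies the hypotheses of Proposition~\ref{pro:10.1}, and conclude $W=V$. This is the only step that uses any real structure theory, and since Proposition~\ref{pro:10.1} is already established, the argument here is immediate.

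Next I would verify the NonNil condition, namely that the standard generators $x,y$ are not nilpotent on $V$. By Lemma~\ref{lem:88} the elements $x_0$ and $x_2$ are semisimple on $V$, and every eigenvalue lies in $\{\gamma q^{d-2i}\}_{i=0}^d$, which consists of nonzero scalars. Hence $x_0$ and $x_2$ are invertible, in particular not nilpotent, so the same holds for $x=\kappa(x)$ and $y=\kappa(y)$ acting on $V$. Therefore the $U_q^+$-module $V$ is NonNil. Note that one must establish NonNil and irreducibility before speaking of the type, since the type of a $U_q^+$-module is only defined for NonNil finite-dimensional irreducible modules via Lemma~\ref{lem:64}.

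Finally I would compute the type. Since $V$ has type $1$ as a $\square_q$-module, the scalar $\gamma$ in Lemma~\ref{lem:88} equals $1$, so the set of distinct eigenvalues of $x_0$ (hence of the standard generator $x$) on $V$ is $\{q^{d-2i}\}_{i=0}^d$, and likewise for $x_2$ (hence $y$). Comparing with the notation of Lemma~\ref{lem:64}, this forces $\gamma=\gamma'=1$, so by the definition of type the NonNil irreducible $U_q^+$-module $V$ has type $(1,1)$ (and diameter $d$). The main obstacle in the whole statement has thus already been dispatched in Proposition~\ref{pro:10.1}; everything remaining is a matter of matching eigenvalues and unwinding definitions.
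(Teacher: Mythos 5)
Your proof is correct and follows essentially the same route as the paper: irreducibility comes from Proposition~\ref{pro:10.1}, and the NonNil and type $(1,1)$ assertions come from the eigenvalue data in Lemma~\ref{lem:88} together with the fact that $\kappa$ sends $x\mapsto x_0$, $y\mapsto x_2$. Your additional remarks (nonzero eigenvalues imply non-nilpotence, and the need to establish NonNil irreducibility before invoking Lemma~\ref{lem:64} to speak of type) simply make explicit what the paper leaves implicit.
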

\begin{proof}
The $U_q^+$-module $V$ is irreducible by Proposition {\ref {pro:10.1}}. By Lemma {\ref {lem:88}} and the construction, for each generator $x,y$ of $U_q^+$ the action on $V$ is semisimple with eigenvalues $\{q^{d-2n}\mid 0\le n\le d\}$. Therefore the $U_q^+$-module $V$ is NonNil and type $(1,1)$. The result follows.
\end{proof}

\begin{lem}
\label{thm:10.3}
Let $V$ denote a NonNil finite-dimensional irreducible $U_q^+$-module of type $(1,1)$. Then there exists a unique $\square_q$-module structure on $V$ such that the standard generators $x$ and $y$ act as $x_{0}$ and $x_{2}$ respectively. This $\square_q$-module is irreducible of type $1$.
\end{lem}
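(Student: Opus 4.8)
The plan is to obtain existence from the bijection already established in Theorem \ref{thm:68} together with the factorization $(\psi\circ\kappa)^\sharp=\kappa^\sharp\circ\psi^\sharp$ from (\ref{k}), and then to prove uniqueness by a direct argument that uses the defining relations (\ref{equ:11}) together with the irreducibility of $V$ as a $U_q^+$-module.

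For existence, let $V$ be our NonNil finite-dimensional irreducible $U_q^+$-module of type $(1,1)$. By Theorem \ref{thm:68} there is a finite-dimensional irreducible $U_q(L(\mathfrak{sl}_2))$-module whose pullback via $\psi\circ\kappa$ is isomorphic to $V$; after transporting the structure along that isomorphism I would regard this $U_q(L(\mathfrak{sl}_2))$-module as living on the same underlying space $V$, and call it $\hat V$, so that $(\psi\circ\kappa)^\sharp\hat V=V$ as $U_q^+$-modules. First I would form the $\square_q$-module $\psi^\sharp\hat V$; in it $x_0$ and $x_2$ act as $X_{01}$ and $X_{23}$ on $\hat V$. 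Using (\ref{k}), the pullback $\kappa^\sharp(\psi^\sharp\hat V)$ is the original $U_q^+$-module $V$, in which $x=\kappa(x)$ acts as $x_0$ and $y=\kappa(y)$ acts as $x_2$; hence in $\psi^\sharp\hat V$ the generators $x_0,x_2$ act as the standard generators $x,y$, which is exactly the required structure. Its type is $1$, since the eigenvalues of $x_0$ coincide with those of $x$ on $V$, namely $\{q^{d-2i}\}_{i=0}^d$ by Lemma \ref{lem:64} and type $(1,1)$, forcing $\gamma=1$ in Lemma \ref{lem:88}. It is irreducible because any $\square_q$-submodule is in particular invariant under $x_0,x_2$, hence is a $U_q^+$-submodule, hence is $0$ or $V$.

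For uniqueness, suppose $x_1,x_1'$ are the actions of the generator $x_1$ in two $\square_q$-module structures on $V$ that both have $x_0=x$ and $x_2=y$. By (\ref{equ:11}) with $i=0$ and $i=1$, both $x_1$ and $x_1'$ satisfy $qx_0x_1-q^{-1}x_1x_0=(q-q^{-1})I$ and $qx_1x_2-q^{-1}x_2x_1=(q-q^{-1})I$. Setting $Z=x_1-x_1'$, the homogeneous versions give $x_0Z=q^{-2}Zx_0$ and $Zx_2=q^{-2}x_2Z$. The first shows $Z$ sends $V_{x_0}(\theta)$ into $V_{x_0}(q^{-2}\theta)$; since the eigenvalues of $x_0$ form the finite set $\{q^{d-2i}\}_{i=0}^d$ by Lemma \ref{lem:88}, the operator $Z$ strictly lowers the $x_0$-eigenvalue and is therefore nilpotent.

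The crux, and the step I expect to be the main obstacle to phrase cleanly, is to convert these two commutation relations into the conclusion $Z=0$. The key observation is that $\ker Z$ is a $U_q^+$-submodule: rewriting the relations as $Zx_0=q^{2}x_0Z$ and $Zx_2=q^{-2}x_2Z$, we get $Z(x_0v)=0$ and $Z(x_2v)=0$ whenever $Zv=0$, so $\ker Z$ is invariant under $x=x_0$ and $y=x_2$. Since $V$ is irreducible as a $U_q^+$-module, $\ker Z$ is $0$ or $V$; but a nonzero nilpotent operator cannot be injective, so $\ker Z=V$ and hence $Z=0$. This shows the action of $x_1$ is uniquely determined, and the identical argument applied to (\ref{equ:11}) with $i=2,3$ shows the action of $x_3$ is uniquely determined. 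Together with the existence and the irreducibility and type computations above, this proves the statement.
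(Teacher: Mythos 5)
Your proposal is correct, but it takes a genuinely different route from the paper, most notably for uniqueness. For existence, you pass through Theorem \ref{thm:68} and $\psi^\sharp$ (transporting a $U_q(L(\mathfrak{sl}_2))$-module structure onto $V$ and pulling back via $\psi$), whereas the paper simply invokes the $\boxtimes_q$ classification \cite[Theorem~10.4]{qtet} to produce the required $\square_q$-module; both arguments lean on an external classification result, and your use of Theorem \ref{thm:68} is not circular since that theorem is quoted from \cite{non} independently of this lemma. The irreducibility and type computations agree with the paper's. The real divergence is uniqueness: the paper deduces it from the flag machinery of Sections 7--8 --- each generator's action is determined by the decomposition $[i,i+1]$, which by Theorem \ref{thm:93} is recovered from the flags $[i]$, $[i+1]$, which in turn are determined by the actions of $x$ and $y$ --- while you subtract two candidate actions of $x_1$, use the relations (\ref{equ:11}) for $i=0,1$ to get $x_0Z=q^{-2}Zx_0$ and $Zx_2=q^{-2}x_2Z$ for the difference $Z$, conclude that $Z$ strictly lowers $x_0$-eigenvalues (hence is nilpotent, by semisimplicity of $x_0=x$) and that $\ker Z$ is a $U_q^+$-submodule, so irreducibility forces $Z=0$; the same argument handles $x_3$. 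Your argument is more elementary and self-contained, needing only (\ref{equ:11}), Lemma \ref{lem:64}, and irreducibility, and it never touches the opposite-flag theory; the paper's argument buys consistency with machinery it develops anyway (the flags also drive Proposition \ref{pro:10.1}) and yields the sharper structural statement that the whole module structure is encoded in the four flags. One small citation point: in the uniqueness step you appeal to Lemma \ref{lem:88} for the eigenvalues of $x_0$, but applying that lemma presupposes the second $\square_q$-structure is irreducible; it is cleaner (and sufficient) to cite Lemma \ref{lem:64} for the eigenvalues of $x$, which equal those of $x_0$ by hypothesis --- alternatively, note that irreducibility of any such structure follows from your own submodule observation. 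This is a cosmetic fix, not a gap.
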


\begin{proof}
By \cite[\rm Theorem~10.4]{qtet}, there exists a $\square_q$-module $V$ such that the standard generators $x$ and $y$ act as $x_{0}$ and $x_{2}$ respectively. By this and since the $U_q^+$-module $V$ is irreducible, the $\square_q$-module $V$ is irreducible. By Lemma {\ref {lem:88}} and since the $U_q^+$-module $V$ is of type $(1,1)$, the $\square_q$-module $V$ is of type $1$. It suffices to show this $\square_q$-module structure on $V$ is unique. For each generator $x_i$ of $\square_q$ the action on $V$ is determined by the decomposition $[i,i+1]$. By Theorem {\ref {thm:93}} the decomposition $[i,i+1]$ is determined by the flags $[i]$ and $[i+1]$. Therefore our $\square_q$-module structure on $V$ is determined by the flags $[h]$, $h\in\mathbb{Z}_4$. By construction the flags $[0]$ and $[1]$ are determined by the decomposition $[0,1]$ and hence by the action of $x$ on $V$. Similarly the flags $[2]$ and $[3]$ are determined by the decomposition $[2,3]$ and hence by the action of $y$ on $V$. Therefore the given $\square_q$-module structure on $V$ is determined by the action of $x$ and $y$ on $V$, so this $\square_q$-module structure is unique. The result follows.
\end{proof}

\begin{thm}
\label{rmk:10.4}
The map $\kappa^\sharp$ gives a bijection between the following two sets:
\begin{enumerate}
\item[\rm(i)] the isomorphism classes of finite-dimensional irreducible $\square_q$-modules of type $1$;
\item[\rm(ii)] the isomorphism classes of NonNil finite-dimensional irreducible $U_q^+$-modules of type $(1,1)$.
\end{enumerate}
\end{thm}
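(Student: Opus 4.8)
The plan is to show that $\kappa^\sharp$ descends to a well-defined bijection between the two sets of isomorphism classes, treating well-definedness and surjectivity as essentially formal consequences of Lemmas \ref{thm:10.2} and \ref{thm:10.3} and reserving the substantive work for injectivity. First I would note that $\kappa^\sharp$ is well defined on isomorphism classes: by Lemma \ref{thm:10.2} it sends each finite-dimensional irreducible $\square_q$-module of type $1$ to a NonNil finite-dimensional irreducible $U_q^+$-module of type $(1,1)$, and any $\square_q$-module isomorphism is automatically $U_q^+$-linear after pullback (since the $U_q^+$-action is computed through $\kappa$), so isomorphic $\square_q$-modules have isomorphic pullbacks. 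Thus $\kappa^\sharp$ induces a map from set (i) to set (ii), and it remains to show this map is both surjective and injective.

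For surjectivity I would begin with a class in (ii) represented by a NonNil irreducible $U_q^+$-module $W$ of type $(1,1)$ and apply Lemma \ref{thm:10.3} to equip the underlying space of $W$ with a $\square_q$-module structure $V$ that is irreducible of type $1$ and in which the standard generators $x,y$ act as $x_0,x_2$. Since $\kappa$ sends $x\mapsto x_0$ and $y\mapsto x_2$, the pullback $\kappa^\sharp V$ has $x,y$ acting exactly as in $W$, so $\kappa^\sharp V=W$ as $U_q^+$-modules; hence the class of $W$ lies in the image of $\kappa^\sharp$.

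The crux is injectivity, which I would establish using the uniqueness clause of Lemma \ref{thm:10.3} via a transport-of-structure argument. Suppose $V,V'$ are irreducible $\square_q$-modules of type $1$ whose pullbacks are isomorphic, and let $\phi\colon V\to V'$ be a $U_q^+$-isomorphism of those pullbacks. Using $\phi$ I would transport the $\square_q$-action of $V$ onto the underlying space of $V'$, setting $a\star w=\phi\bigl(a\,\phi^{-1}(w)\bigr)$ for $a\in\square_q$ and $w\in V'$; by construction this is a $\square_q$-module structure making $\phi$ a $\square_q$-isomorphism from $V$ to $(V',\star)$. Because $\phi$ is $U_q^+$-linear, the transported action $\star$ agrees with the original $\square_q$-action of $V'$ on the generators $x_0=\kappa(x)$ and $x_2=\kappa(y)$, so both structures have the same pullback via $\kappa$. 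By the uniqueness in Lemma \ref{thm:10.3}, there is only one $\square_q$-structure on $V'$ extending this $U_q^+$-structure, whence $\star$ coincides with the original $\square_q$-action; thus $\phi$ is already a $\square_q$-isomorphism $V\to V'$ and the two classes in (i) agree. I expect this upgrading of a $U_q^+$-isomorphism to a $\square_q$-isomorphism --- recognizing that the transported and original actions agree on $x_0,x_2$ and then invoking uniqueness --- to be the main obstacle, although it is largely formal once the correct transport is set up; the remaining steps reduce to bookkeeping with $\kappa$.
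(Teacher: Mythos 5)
Your proposal is correct and follows essentially the same route as the paper: the paper's proof of this theorem is simply a citation of Lemmas \ref{thm:10.2} and \ref{thm:10.3}, with exactly the bookkeeping you spell out (well-definedness and surjectivity from the existence parts, injectivity from the uniqueness clause via transport of structure) left implicit. Your explicit transport-of-structure argument for injectivity is the intended way to extract the bijection from those two lemmas.
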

\begin{proof}
By Lemmas {\ref{thm:10.2}} and {\ref {thm:10.3}}.
\end{proof}

Recall the map $\psi$ from above Lemma {\ref {lem:1}}.
\begin{thm}
\label{thm:101}
The map $\psi^\sharp$ gives a bijection between the following two sets:
\begin{enumerate}
\item[\rm(i)] the isomorphism classes of finite-dimensional irreducible $U_q(L(\mathfrak{sl}_2))$-modules $V$ of type $1$ such that $P_V(1)\ne 0$;
\item[\rm(ii)] the isomorphism classes of finite-dimensional irreducible $\square_q$-modules of type $1$.
\end{enumerate}
\end{thm}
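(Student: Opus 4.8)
The plan is to factor the map $\psi^\sharp$ through the two bijections already established, and thereby reduce the claim to a commutative-diagram argument. Recall from (\ref {k}) that $(\psi\circ\kappa)^\sharp=\kappa^\sharp\circ\psi^\sharp$. Theorem {\ref {thm:68}} states that $(\psi\circ\kappa)^\sharp$ is a bijection from the set of isomorphism classes in (i) of the present theorem onto the isomorphism classes of NonNil finite-dimensional irreducible $U_q^+$-modules of type $(1,1)$. Theorem {\ref {rmk:10.4}} states that $\kappa^\sharp$ is a bijection from the set of isomorphism classes of finite-dimensional irreducible $\square_q$-modules of type $1$ onto the same set of $U_q^+$-modules.

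First I would record that all three maps are well defined on the relevant sets of isomorphism classes: a pullback of an irreducible module via a surjection (or, more generally, via a map whose image acts irreducibly) is irreducible, and the type-$1$ and NonNil conditions are preserved under the respective pullbacks by Lemmas {\ref {lem:88}}, {\ref {thm:10.2}}, and {\ref {thm:10.3}}. In particular $\psi^\sharp$ sends the set (i) of the theorem into the set (ii): given a finite-dimensional irreducible $U_q(L(\mathfrak{sl}_2))$-module $V$ of type $1$ with $P_V(1)\ne 0$, its pullback $\psi^\sharp(V)$ is a finite-dimensional $\square_q$-module, and applying $\kappa^\sharp$ to it yields the irreducible NonNil $U_q^+$-module of type $(1,1)$ guaranteed by Theorem {\ref {thm:68}}; since $\kappa^\sharp(\psi^\sharp(V))$ is irreducible and $\kappa$ has image acting irreducibly, $\psi^\sharp(V)$ is itself irreducible of type $1$ by Theorem {\ref {rmk:10.4}}.

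The core of the argument is then purely formal. Since $\kappa^\sharp\circ\psi^\sharp=(\psi\circ\kappa)^\sharp$ is a bijection, $\psi^\sharp$ is injective on the set (i). Since $\kappa^\sharp$ is itself a bijection, composing the identity $\kappa^\sharp\circ\psi^\sharp=(\psi\circ\kappa)^\sharp$ with $(\kappa^\sharp)^{-1}$ on the left gives
\begin{gather*}
\psi^\sharp=(\kappa^\sharp)^{-1}\circ(\psi\circ\kappa)^\sharp,
\end{gather*}
a composition of two bijections, hence a bijection from the set (i) onto the set (ii).

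The step I expect to require the most care is verifying that $\psi^\sharp$ genuinely lands in the set (ii) and is compatible with the factorization at the level of isomorphism classes, rather than just as set maps: one must check that pulling back along $\psi$ really does produce an irreducible $\square_q$-module (not merely one whose further pullback is irreducible) and that isomorphic $U_q(L(\mathfrak{sl}_2))$-modules yield isomorphic $\square_q$-modules. Both follow from Theorem {\ref {rmk:10.4}} once the identity (\ref {k}) is invoked, since the bijectivity of $\kappa^\sharp$ transfers irreducibility and isomorphism type back from $U_q^+$-modules to $\square_q$-modules. With these compatibilities in hand, the formal composition argument closes the proof.
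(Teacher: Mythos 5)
Your proposal is correct and follows essentially the same route as the paper: the paper's proof of Theorem \ref{thm:101} is exactly the one-line citation of the identity (\ref{k}) together with Theorems \ref{thm:68} and \ref{rmk:10.4}, i.e.\ the factorization $\psi^\sharp=(\kappa^\sharp)^{-1}\circ(\psi\circ\kappa)^\sharp$ as a composition of bijections. Your additional verifications (that $\psi^\sharp$ lands in the set (ii), and that everything is well defined on isomorphism classes) are the details the paper leaves implicit, and they are handled correctly.
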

\begin{proof}
The result follows from (\ref {k}) along with Theorems {\ref {thm:68}} and {\ref {rmk:10.4}}.
\end{proof}

Let $V$ denote a finite-dimensional irreducible $\square_q$-module of type $1$. Via Theorem {\ref{thm:101}}, the vector space $V$ becomes a finite-dimensional irreducible $U_q(L(\mathfrak{sl}_2))$-module of type $1$ such that $P_V(1)\ne 0$. Via Theorem {\ref{rmk:10.4}}, the vector space $V$ becomes a NonNil finite-dimensional irreducible $U_q^+$-module of type $(1,1)$.
\begin{1def}
\label{def:104}
Let $V$ denote a finite-dimensional irreducible $\square_q$-module of type $1$. Let $P_V$ (resp. $Q_V$) denote the polynomial from Definition {\ref {def:52}} (resp. Definition {\ref {def:52b}}) associated with the $U_q(L(\mathfrak{sl}_2))$-module $V$ from Theorem {\ref{thm:101}}. Observe that the polynomial $P_V$ (resp. $Q_V$) is equal to the polynomial from Definition {\ref {driuq}} associated with the $U_q^+$-module $V$ from Theorem {\ref{rmk:10.4}}.
\end{1def}

\begin{proof}[Proof of Proposition \ref{thm:36}]
By Theorems {\ref {thm:57}} and {\ref {thm:101}}.
\end{proof}

We now give a detailed description of the finite-dimensional irreducible $\square_q$-modules of type $1$ and diameter $1$. Pick $0\ne a\in\mathbb{F}$. Via Theorem {\ref{thm:101}}, the $U_q(L(\mathfrak{sl}_2))$-module $V(1,a)$ from Lemma {\ref {eg1}} becomes a $\square_q$-module.
\begin{lem}
\label{eg4}
The $\square_q$-modules
\begin{equation}
\label{evae2}
V(1,a) \qquad \qquad a\in \mathbb{F}\backslash \{0,1\}
\end{equation}
are irreducible. With respect to the basis $v_0,v_1$ from Lemma {\ref {eg1}}, the $\square_q$ generators $\{x_i\}_{i\in\mathbb{Z}_4}$ are represented by the following matrices:
\begin{align}
\begin{aligned}
x_{0}:\,
\begin{bmatrix}
    q^{-1}       & (q-q^{-1})a^{-1} \\
    0       & q
\end{bmatrix},\qquad\quad
x_{2}:\,
\begin{bmatrix}
    q       & 0 \\
    q-q^{-1}       & q^{-1}
\end{bmatrix}.\label{matr2}\\
x_{1}:\,
\begin{bmatrix}
    q       & q^{-1}-q \\
    0       & q^{-1}
\end{bmatrix},\qquad\quad
x_{3}:\,
\begin{bmatrix}
    q^{-1}       & 0 \\
    (q^{-1}-q)a      & q
\end{bmatrix}.
\end{aligned}
\end{align}
Moreover $V(1,a)$ has type $1$ and diameter $1$. Every finite-dimensional irreducible $\square_q$-module of type $1$ and diameter $1$ is isomorphic to exactly one of the modules (\ref {evae2}).
\end{lem}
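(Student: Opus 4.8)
The plan is to mirror the proof of Lemma~\ref{eg3}, with the bijection $\psi^\sharp$ of Theorem~\ref{thm:101} playing the role that Theorem~\ref{thm:68} played there. Recall from the discussion preceding the statement that the $\square_q$-module $V(1,a)$ is the pullback via $\psi$ of the $U_q(L(\mathfrak{sl}_2))$-module $V(1,a)$ of Lemma~\ref{eg1}; since $\psi$ sends $x_i\mapsto X_{i,i+1}$, each generator $x_i$ acts on $V(1,a)$ exactly as the equitable generator $X_{i,i+1}$.

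First I would read off the matrices in (\ref{matr2}). The matrices representing $X_{01},X_{12},X_{23},X_{30}$ on $V(1,a)$ are recorded in Lemma~\ref{lem:qeva}, so matching $x_0\to X_{01}$, $x_1\to X_{12}$, $x_2\to X_{23}$, $x_3\to X_{30}$ reproduces (\ref{matr2}) directly. Next, for irreducibility and type: by Lemma~\ref{eg2} we have $P_{V(1,a)}(z)=1-az$, hence $P_{V(1,a)}(1)=1-a$, which is nonzero precisely when $a\neq 1$. Thus for $a\in\mathbb{F}\backslash\{0,1\}$ the module $V(1,a)$ lies in set~(i) of Theorem~\ref{thm:101}, so its image under $\psi^\sharp$ is an irreducible $\square_q$-module of type~$1$. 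The diameter is $1$ since $x_0=X_{01}$ acts with the two distinct eigenvalues $q,q^{-1}$ (the diagonal entries of its upper-triangular matrix), so Lemma~\ref{lem:88} gives diameter~$1$.

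For the exhaustiveness claim I would run the bijection backwards. Given any finite-dimensional irreducible $\square_q$-module $V$ of type~$1$ and diameter~$1$, Theorem~\ref{thm:101} furnishes a unique finite-dimensional irreducible $U_q(L(\mathfrak{sl}_2))$-module $\widetilde V$ of type~$1$ with $P_{\widetilde V}(1)\neq 0$ and $V\cong\psi^\sharp(\widetilde V)$. Once I know $\widetilde V$ has diameter~$1$, Lemma~\ref{eg1} writes $\widetilde V\cong V(1,a)$ for a unique $a\neq 0$, and $P_{\widetilde V}(1)\neq 0$ forces $a\neq 1$; pulling back gives $V\cong V(1,a)$ with $a\in\mathbb{F}\backslash\{0,1\}$ uniquely.

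The main obstacle is precisely this matching of diameters, since the diameter of a $\square_q$-module is read from the eigenvalues of $x_0$ (Lemma~\ref{lem:88}) while the diameter of a $U_q(L(\mathfrak{sl}_2))$-module is read from the weight decomposition of $K_0,K_1$ (Lemma~\ref{lem:dec}). To reconcile them I would use that $x_0=X_{01}$ together with $X_{13}=K_1$ satisfies $\tfrac{qX_{01}X_{13}-q^{-1}X_{13}X_{01}}{q-q^{-1}}=I$, so Lemma~\ref{lem:85} yields $\dim V_{X_{01}}(\theta)=\dim V_{K_1}(\theta^{-1})$ for all nonzero $\theta$. Hence $X_{01}$ and $K_1$ have the same number of distinct eigenvalues on $\widetilde V$, so the $\square_q$-diameter of $\psi^\sharp(\widetilde V)$ equals the $U_q(L(\mathfrak{sl}_2))$-diameter of $\widetilde V$. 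In particular diameter~$1$ for $V$ forces diameter~$1$ for $\widetilde V$, which closes the exhaustiveness argument.
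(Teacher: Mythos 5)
Your proposal is correct and takes essentially the same approach as the paper: both read off the matrices (\ref{matr2}) from Lemma~\ref{lem:qeva} via $\psi:x_i\mapsto X_{i,i+1}$ and transfer irreducibility, type, diameter, and the classification through the bijection of Theorem~\ref{thm:101} together with Lemma~\ref{eg1}. The only cosmetic differences are that the paper gets irreducibility via the $U_q^+$ route (Lemma~\ref{eg3} and Theorem~\ref{rmk:10.4}) while you argue directly from Lemma~\ref{eg2} and Theorem~\ref{thm:101}, and that your explicit diameter-matching argument (via Lemma~\ref{lem:85} applied to $X_{01}$, $K_1$) spells out a step the paper treats as routine.
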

\begin{proof}
Pick $0\ne a\in\mathbb{F}$. By Lemma {\ref {eg3}} and Theorem {\ref {rmk:10.4}}, the $\square_q$-module $V(1,a)$ is irreducible if and only if $a\ne 1$. The matrices in (\ref {matr2}) are obtained from Lemma {\ref {lem:qeva}} and Theorem {\ref{thm:101}}. The remaining assertions follow by Lemma {\ref {eg1}} and Theorem {\ref{thm:101}}.
\end{proof}

\begin{lem}
\label{lem:998}
Pick $a\in \mathbb{F}\backslash \{0,1\}$. Let $V$ denote the $\square_q$-module $V(1,a)$ from Lemma {\ref {eg4}}. Then $P_V(z)=1-az$ and $Q_V(z)=1-a^{-1}z$.
\end{lem}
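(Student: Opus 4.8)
The plan is to reduce this computation for the $\square_q$-module $V(1,a)$ to the corresponding computation for the $U_q(L(\mathfrak{sl}_2))$-module $V(1,a)$, which has already been carried out in Lemma~\ref{eg2}. The key observation is that the Drinfel'd polynomials $P_V$ and $Q_V$ attached to a finite-dimensional irreducible $\square_q$-module of type $1$ are, by Definition~\ref{def:104}, literally defined to be the polynomials of Definitions~\ref{def:52} and \ref{def:52b} for the associated $U_q(L(\mathfrak{sl}_2))$-module obtained via Theorem~\ref{thm:101}. So the entire content of the lemma is that the $\square_q$-module $V(1,a)$ corresponds, under the map $\psi^\sharp$ of Theorem~\ref{thm:101}, to the $U_q(L(\mathfrak{sl}_2))$-module $V(1,a)$ with the same parameter $a$.

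First I would recall from Lemma~\ref{eg4} that the $\square_q$-module $V(1,a)$ is, by construction, the pullback via $\psi\circ\kappa$ (equivalently, via the identification coming from $\psi^\sharp$) of the $U_q(L(\mathfrak{sl}_2))$-module $V(1,a)$ from Lemma~\ref{eg1}. In other words, the $\square_q$-module $V(1,a)$ is exactly the object produced from the $U_q(L(\mathfrak{sl}_2))$-module $V(1,a)$ by applying $\psi^\sharp$. Consequently the $U_q(L(\mathfrak{sl}_2))$-module that Theorem~\ref{thm:101} reattaches to the $\square_q$-module $V(1,a)$ is $V(1,a)$ itself, since $\psi^\sharp$ is the bijection that accomplishes this identification and the construction of the $\square_q$-module $V(1,a)$ was set up precisely so that its image under this bijection is $V(1,a)$.

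Having established this identification, the computation is immediate. By Definition~\ref{def:104}, the polynomial $P_V$ (resp. $Q_V$) for the $\square_q$-module $V(1,a)$ equals the polynomial $P_V$ (resp. $Q_V$) from Definition~\ref{def:52} (resp. Definition~\ref{def:52b}) for the $U_q(L(\mathfrak{sl}_2))$-module $V(1,a)$. But Lemma~\ref{eg2} computes the latter: $P_V(z)=1-az$ and $Q_V(z)=1-a^{-1}z$. This yields the claim.

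The only point requiring care, and hence the main obstacle, is verifying that the correspondence is parameter-preserving, i.e. that the $\square_q$-module labeled $V(1,a)$ really does go back to the $U_q(L(\mathfrak{sl}_2))$-module $V(1,a)$ under Theorem~\ref{thm:101} rather than to some module with a transformed parameter. This is settled by the construction preceding Lemma~\ref{eg4}: the $\square_q$-module $V(1,a)$ is \emph{defined} as the pullback of the $U_q(L(\mathfrak{sl}_2))$-module $V(1,a)$ via $\psi$, so no parameter transformation occurs, and one can cross-check using the matrix representations, since the matrices for $x_0,x_1,x_2,x_3$ in (\ref{matr2}) agree with the matrices for $X_{01},X_{12},X_{23},X_{30}$ in Lemma~\ref{lem:qeva} under $x_i\mapsto X_{i,i+1}$. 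Thus the proof is a short appeal to Lemma~\ref{eg2} together with Definition~\ref{def:104}.
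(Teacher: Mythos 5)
Your proposal is correct and matches the paper's own proof, which simply cites Lemma~\ref{eg2} and Definition~\ref{def:104}; the identification you take care to verify (that the $\square_q$-module $V(1,a)$ corresponds under Theorem~\ref{thm:101} to the $U_q(L(\mathfrak{sl}_2))$-module $V(1,a)$ with the same parameter) is exactly what the paper builds into the construction in the paragraph preceding Lemma~\ref{eg4}. Your extra cross-check via the matrices in (\ref{matr2}) and Lemma~\ref{lem:qeva} is a sensible sanity check but not needed beyond that.
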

\begin{proof}
By Lemma {\ref {eg2}} and Definition {\ref {def:104}}.
\end{proof}

\section{Tridiagonal pairs}
Our next general goal is to prove Theorem {\ref {thm:37}}. To do this, it is convenient to bring in the notion of a tridiagonal pair. In this section we recall the definition of a tridiagonal pair and review its basic properties. We are mainly interested in a family of tridiagonal pairs said to have $q$-geometric type. Let $V$ denote a finite-dimensional irreducible $\square_q$-module of type $1$. Near the end of this section we show that for $i\in\mathbb{Z}_4$ the $\square_q$ generators $x_i,x_{i+2}$ act on $V$ as a $q$-geometric tridiagonal pair.

From now until the end of Theorem {\ref {thm:12.3}}, let $V$ denote a vector space over $\mathbb{F}$ with finite positive dimension.
\begin{1def}
\label{def:trid}
(See \cite[\rm Definition~1.1]{tri}.)
By a {\it tridiagonal pair} on $V$ we mean an ordered pair $A,A^*$ of elements in ${\rm End}(V)$ that satisfy (i)--(iv) below:
\begin{enumerate}
\item[\rm(i)] Each of $A,A^*$ is semisimple.
\item[\rm(ii)] There exists an ordering $\{V_i\}_{i=0}^d$ of the eigenspaces of $A$ such that
\begin{gather}
A^* V_i\subseteq V_{i-1}+V_i+V_{i+1}\qquad (0\le i\le d \label{equ:11.1}),
\end{gather}
where $V_{-1}=0$, $V_{d+1}=0$.
\item[\rm(iii)] There exists an ordering $\{V_i^*\}_{i=0}^\delta$ of the eigenspaces of $A^*$ such that
\begin{gather*}
A V_i^*\subseteq V_{i-1}^*+V_i^*+V_{i+1}^*\qquad (0\le i\le \delta),
\end{gather*}
where $V_{-1}^*=0$, $V_{\delta+1}^*=0$.
\item[\rm(iv)] There does not exist a subspace $W$ of $V$ such that $AW\subseteq W$, $A^* W\subseteq W$, $W\ne 0$, $W\ne V$.
\end{enumerate}
We say the tridiagonal pair $A,A^*$ is {\it over} $\mathbb{F}$.
\end{1def}

\begin{note}
According to a common notational convention, $A^*$ denotes the conjugate transpose of $A$. We are not using this convention. In a tridiagonal pair $A,A^*$ the linear transformations $A$ and $A^*$ are arbitrary subject to (i)--(iv) above.
\end{note}

We recall a few basic facts about tridiagonal pairs. Let $A,A^*$ denote a tridiagonal pair on $V$ and let $d,\delta$ be as in Definition {\ref {def:trid}}(ii), (iii). By \cite[\rm Lemma~4.5]{tri} we have $d=\delta$; we call this common value the {\it diameter} of $A,A^*$. By \cite[\rm Corollary~5.7]{tri}, for $0\le i\le d$ the subspaces $V_i,V_i^*$ have the same dimension; we denote this common dimension by $\rho_i$. By the construction $\rho_i\ne 0$. By \cite[\rm Corollary~5.7, 6.6]{tri}, the sequence $\{\rho_i\}_{i=0}^d$ is symmetric and unimodal; that is $\rho_i=\rho_{d-i}$ for $0\le i\le d$ and $\rho_{i-1}\le \rho_{i}$ for $1\le i\le d/2$. We call the sequence $\{\rho_i\}_{i=0}^d$ the {\it shape} of $A,A^*$. The pair $A,A^*$ is said to be {\it sharp} whenever $\rho_0=1$. By \cite[\rm Theorem~1.3]{stru} and since $\mathbb{F}$ is algebraically closed, the tridiagonal pair $A,A^*$ is sharp. An ordering of the eigenspaces of $A$ is called {\it standard} whenever it satisfies ({\ref {equ:11.1}}). We comment on the uniqueness of the standard ordering. Let $\{V_i\}_{i=0}^d$ denote a standard ordering of the eigenspaces of $A$. Then the ordering $\{V_{d-i}\}_{i=0}^d$ is standard and no further ordering is standard. An ordering of the eigenvalues of $A$ is called {\it standard} whenever the corresponding ordering of the eigenspaces of $A$ is standard. Similar comments apply to $A^*$. Let $\{V_i\}_{i=0}^d$ (resp. $\{V_i^*\}_{i=0}^d$) denote a standard ordering of the eigenspaces of $A$ (resp. $A^*$). For $0\le i\le d$ let $\theta_i$ (resp. $\theta_i^*$) denote the eigenvalue of $A$ (resp. $A^*$) that corresponds to the eigenspace $V_i$ (resp. $V_i^*$). By construction the orderings $\{\theta_i\}_{i=0}^d$ and $\{\theta_i^*\}_{i=0}^d$ are standard. By \cite[\rm Theorem~4.6]{tri}, for $0\le i\le d$ the subspace $V_0^*$ is invariant under
\begin{gather*}
(A^*-\theta_1^*I)(A^*-\theta_2^*I)\dots (A^*-\theta_i^*I)(A-\theta_{i-1}I)\dots (A-\theta_{1}I)(A-\theta_{0}I).
\end{gather*}
Let $\zeta_i$ denote the corresponding eigenvalue. Note that $\zeta_0=1$. We call the sequence $\{\zeta_i\}_{i=0}^d$ the {\it split sequence} of $A,A^*$ with respect to the ordering $(\{\theta_i\}_{i=0}^d,\{\theta_i^*\}_{i=0}^d)$.

In \cite{cla} the tridiagonal pairs over $\mathbb{F}$ are classified up to isomorphism. In this classification there are several cases; the \enquote{most general} case is called type I. We will recall the type I case shortly.
\begin{lem}
\label{beta}
\rm
(See \cite[\rm Theorem~11.1]{tri}.)
\it
Let $A,A^*$ denote a tridiagonal pair on $V$. Let $\{\theta_i\}_{i=0}^d$ (resp. $\{\theta_i^*\}_{i=0}^d$) denote a standard ordering of the eigenvalues of $A$ (resp. $A^*$). Then the expressions
\begin{gather}
\frac{\theta_{i-2}-\theta_{i+1}}{\theta_{i-1}-\theta_i},\qquad \qquad \frac{\theta_{i-2}^*-\theta_{i+1}^*}{\theta_{i-1}^* -\theta_i^*} \label{equ10}
\end{gather}
are equal and independent of $i$ for $2\le i\le d-1$.
\end{lem}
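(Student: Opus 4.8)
The plan is to deduce the statement from the tridiagonal (Askey--Wilson) relations satisfied by any tridiagonal pair. Recall that $A,A^*$ satisfy two cubic relations of the form $[A,\,A^2A^*-\beta AA^*A+A^*A^2-\gamma(AA^*+A^*A)-\varrho A^*]=0$ and $[A^*,\,A^{*2}A-\beta A^*AA^*+AA^{*2}-\gamma^*(A^*A+AA^*)-\varrho^* A]=0$, where crucially the scalar $\beta$ is common to both. Establishing these relations, with a shared $\beta$, is the deep structural input; I would obtain it from the split decomposition machinery of \cite{tri}, and this is where essentially all of the difficulty lives. Granting it, the eigenvalue statement becomes an elementary computation, which I now outline.

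First I would fix the standard ordering $\{V_i\}_{i=0}^d$ of the eigenspaces of $A$ and decompose $A^*=L+F+R$ into its lowering, fixing, and raising parts relative to this grading, so that $R$ maps $V_i$ into $V_{i+1}$. Since $A$ acts as the scalar $\theta_i$ on $V_i$, the commutator $[A,\,\cdot\,]$ annihilates the fixing part and multiplies the raising (resp. lowering) part on $V_i$ by $\theta_{i+1}-\theta_i$ (resp. $\theta_{i-1}-\theta_i$); as these factors are nonzero, the first tridiagonal relation forces the raising and lowering parts of the bracketed operator to vanish identically. Reading off the coefficient of the raising part on $V_i$ gives
\[
\theta_{i+1}^2-\beta\theta_i\theta_{i+1}+\theta_i^2-\gamma(\theta_i+\theta_{i+1})-\varrho=0
\]
for every $i$ with $R|_{V_i}\ne 0$.

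Next I would verify that $R|_{V_i}\ne 0$ for $0\le i\le d-1$, so that the displayed identity holds throughout that range. This is where Definition~\ref{def:trid}(iv) enters: if $R|_{V_i}=0$ then $V_0+\dots+V_i$ is invariant under both $A$ and $A^*$ and is a proper nonzero subspace, a contradiction. With the identity in hand for all relevant $i$, I would subtract the instance at $i-1$ from the instance at $i$ and cancel the common factor $\theta_{i+1}-\theta_{i-1}\ne 0$ to obtain the three-term recurrence $\theta_{i-1}+\theta_{i+1}=\beta\theta_i+\gamma$. Subtracting consecutive instances of this recurrence then yields $\theta_{i-2}-\theta_{i+1}=(\beta+1)(\theta_{i-1}-\theta_i)$, so the first ratio in \eqref{equ10} equals $\beta+1$ for $2\le i\le d-1$. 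The identical argument applied to $A^*$, using the second tridiagonal relation with the \emph{same} $\beta$, shows the second ratio also equals $\beta+1$; hence the two expressions are equal and independent of $i$.

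The main obstacle is clearly the first paragraph: proving that $A,A^*$ satisfy cubic relations at all, and that a single $\beta$ serves both. Everything afterward is bookkeeping with the raising/lowering decomposition together with a double telescoping of the resulting quadratic identity, and it is precisely the shared value of $\beta$ that makes the two ratios coincide.
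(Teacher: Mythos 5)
You should note at the outset that the paper itself offers no proof of this lemma: it is imported wholesale as a citation to \cite[Theorem~11.1]{tri}, so there is no internal argument to compare against. Judged on its own merits, your proposal is correct, and it is in fact essentially the argument by which this result is established in the cited reference: the structural input is the pair of tridiagonal (Askey--Wilson) relations with a \emph{common} scalar $\beta$ (this is \cite[Theorem~10.1]{tri}, which you rightly identify as the deep step and defer to the split decomposition machinery), after which everything is elementary. Your bookkeeping checks out: writing $A^*=L+F+R$ relative to the standard ordering $\{V_i\}_{i=0}^d$, the commutator $[A,\cdot]$ kills the fixing part and scales the raising and lowering parts by the nonzero factors $\theta_{i+1}-\theta_i$ and $\theta_{i-1}-\theta_i$, and directness of the sum $\sum_i V_i$ forces each part to vanish separately, giving the quadratic identity $\theta_{i+1}^2-\beta\theta_i\theta_{i+1}+\theta_i^2-\gamma(\theta_i+\theta_{i+1})-\varrho=0$ wherever $R|_{V_i}\ne 0$; the irreducibility axiom (Definition~\ref{def:trid}(iv)) correctly rules out $R|_{V_i}=0$ for $0\le i\le d-1$, since otherwise $V_0+\dots+V_i$ would be a proper nonzero invariant subspace; and the two telescoping steps (using $\theta_{i+1}\ne\theta_{i-1}$) yield $\theta_{i-2}-\theta_{i+1}=(\beta+1)(\theta_{i-1}-\theta_i)$ exactly on the range $2\le i\le d-1$. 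The dual computation with the second relation and the same $\beta$ gives the same value $\beta+1$ for the starred ratio, which is precisely why the two expressions coincide. The only caveat is the one you flag yourself: as a self-contained proof this is incomplete without establishing the tridiagonal relations, but since that is an earlier theorem in the very reference the paper cites, the deferral is legitimate.
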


\begin{1def}
(See \cite[\rm Definition~4.3]{dri}.)
\label{beta1}
Let $A,A^*$ denote a tridiagonal pair on $V$. We associate with $A,A^*$ a scalar $\beta\in\mathbb{F}$ as follows. If the diameter $d\ge 3$ let $\beta+1$ denote the common value of (\ref {equ10}). If $d\le 2$ let $\beta$ denote any scalar in $\mathbb{F}$. We call $\beta$ a {\it base} of $A,A^*$. By construction, for $d\ge 3$ the tridiagonal pairs $A,A^*$ and $A^*,A$ have the same base. For $d\le 2$, we always chose the bases such that $A,A^*$ and $A^*,A$ have the same base.
\end{1def}

\begin{1def}
\label{type}
Let $A,A^*$ denote a tridiagonal pair on $V$. We say that the base of $A,A^*$ has {\it type I} whenever this base is not equal to $2$ or $-2$.
\end{1def}

\begin{lem}
\label{thet}
\rm
(See \cite[\rm Theorem~11.2]{tri}.)
\it
Let $A,A^*$ denote a tridiagonal pair on $V$ that has diameter $d$. Let $\{\theta_i\}_{i=0}^d$ (resp. $\{\theta_i^*\}_{i=0}^d$) denote a standard ordering of the eigenvalues of $A$ (resp. $A^*$). Let $\beta$ denote a base of $A,A^*$ and assume that $\beta$ has type I. Fix a nonzero $t\in \mathbb{F}$ such that $t^2+t^{-2}=\beta$. Then there exists a sequence of scalars $a,b,c,a^*,b^*,c^*$ taken from $\mathbb{F}$ such that
\begin{gather*}
\theta_i=a+bt^{2i-d}+ct^{d-2i},\\
\theta_i^*=a^*+b^*t^{2i-d}+c^*t^{d-2i}
\end{gather*}
for $0\le i\le d$. This sequence is uniquely determined by $t$ provided $d\ge 2$.
\end{lem}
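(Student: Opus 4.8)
The plan is to convert the hypotheses into a constant-coefficient linear recurrence satisfied by the eigenvalues and then solve it explicitly. I would first treat the case $d\ge 3$, where Lemma {\ref{beta}} together with Definition {\ref{beta1}} tells us that the common value of (\ref{equ10}) equals $\beta+1$, so that
\[
\frac{\theta_{i-2}-\theta_{i+1}}{\theta_{i-1}-\theta_i}=\beta+1 \qquad (2\le i\le d-1),
\]
and likewise for the starred sequence. Since the $\theta_i$ are the \emph{distinct} eigenvalues of $A$ in a standard ordering, the denominators are nonzero, so I may clear them to obtain the order-three recurrence
\[
\theta_{i+1}-(\beta+1)\theta_i+(\beta+1)\theta_{i-1}-\theta_{i-2}=0 \qquad (2\le i\le d-1),
\]
and the analogous recurrence for $\{\theta_i^*\}$.

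Next I would analyze the characteristic polynomial $\lambda^3-(\beta+1)\lambda^2+(\beta+1)\lambda-1$. One checks that $\lambda=1$ is a root, giving the factorization
\[
\lambda^3-(\beta+1)\lambda^2+(\beta+1)\lambda-1=(\lambda-1)(\lambda^2-\beta\lambda+1),
\]
and since $\beta=t^2+t^{-2}$ the quadratic factor is $(\lambda-t^2)(\lambda-t^{-2})$. Thus the characteristic roots are $1,t^2,t^{-2}$. Here the type I hypothesis enters decisively: one has $t^2=1$ or $t^{-2}=1$ exactly when $\beta=2$, and $t^2=t^{-2}$ exactly when $t^4=1$, i.e. when $\beta=\pm2$; so Definition {\ref{type}} ($\beta\ne\pm2$) guarantees that the three roots $1,t^2,t^{-2}$ are mutually distinct. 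Consequently the three sequences $i\mapsto 1$, $i\mapsto t^{2i-d}$, $i\mapsto t^{d-2i}$ (the latter two being scalar reparametrizations of $t^{\pm2i}$) are linearly independent solutions of the recurrence.

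To finish the case $d\ge 3$, I would define $a,b,c$ to be the unique scalars with $\theta_i=a+bt^{2i-d}+ct^{d-2i}$ for $i=0,1,2$; this is possible because the resulting $3\times3$ coefficient matrix is a Vandermonde matrix in the distinct values $1,t^2,t^{-2}$, hence invertible. Both $\{\theta_i\}$ and $\{a+bt^{2i-d}+ct^{d-2i}\}$ satisfy the same recurrence and agree at $i=0,1,2$, so forward propagation of the recurrence forces them to coincide for all $0\le i\le d$; the starred case is identical. The diameters $d\le 2$ I would treat directly: for $d=2$ the same $3\times3$ Vandermonde system yields existence and uniqueness outright (no recurrence is needed), while for $d\le 1$ existence is immediate by solving an underdetermined linear system and uniqueness is not asserted. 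Uniqueness for $d\ge 2$ in all cases reduces to the invertibility of that same Vandermonde matrix determined by $\theta_0,\theta_1,\theta_2$. I expect the main obstacle to be precisely the distinctness of the characteristic roots: if it failed, the recurrence would acquire generalized (polynomial-times-exponential) solutions and the asserted closed form would break down, so the careful deduction of distinctness from $\beta\ne\pm2$ is the crux of the argument.
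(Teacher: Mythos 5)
Your proof is correct, but it is not the paper's approach, because the paper has no proof to speak of: Lemma~\ref{thet} is imported verbatim from the literature via the citation to \cite[Theorem~11.2]{tri}. What you have done is reconstruct, essentially correctly, the standard argument that underlies that cited theorem, using only ingredients the paper does state: Lemma~\ref{beta} and Definition~\ref{beta1} give the third-order constant-coefficient recurrence $\theta_{i+1}-(\beta+1)\theta_i+(\beta+1)\theta_{i-1}-\theta_{i-2}=0$ on $2\le i\le d-1$; the characteristic polynomial factors as $(\lambda-1)(\lambda-t^2)(\lambda-t^{-2})$; the type~I hypothesis $\beta\ne\pm2$ (Definition~\ref{type}) is exactly what makes $1,t^2,t^{-2}$ pairwise distinct, so the general solution is $a+bt^{2i-d}+ct^{d-2i}$ with $(a,b,c)$ pinned down by the invertible (scaled Vandermonde) $3\times 3$ system at $i=0,1,2$ and then propagated forward. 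Your handling of the edge cases is also right: for $d=2$ the recurrence is vacuous and the Vandermonde system alone gives existence and uniqueness, and for $d\le 1$ only existence is claimed, matching the lemma's restriction of uniqueness to $d\ge 2$. The one place where you wave your hand slightly is existence for $d=1$: consistency of the underdetermined $2\times 3$ system is not automatic but follows because the minor formed by the $b,c$ columns equals $t^{-2}-t^2\ne 0$ (again using $t^4\ne 1$), so the coefficient matrix has full row rank; this deserves one sentence. In terms of trade-offs, the paper's citation keeps Section~9 short and defers all normalization issues to \cite{tri}, whereas your argument makes the lemma self-contained and, usefully, isolates precisely where type~I enters --- if $\beta=\pm2$ the characteristic roots collide and the solution space acquires polynomial-times-exponential terms, so the asserted closed form genuinely fails, just as you observe.
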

With reference to Lemma {\ref {thet}}, we have $t^4\ne 1$ since $t^2+t^{-2}=\beta$ and $\beta\ne \pm 2$. More generally, we have the following result.

\begin{lem}
\rm
(See \cite[\rm Theorem~11.2]{tri}.)
\it
With reference to Lemma {\ref {thet}}, we have $t^{2i}\ne 1$ for $1\le i\le d$.
\end{lem}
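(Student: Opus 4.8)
The plan is to combine the explicit eigenvalue formula of Lemma \ref{thet} with the fact that the eigenvalues of $A$ are mutually distinct. Recall that in a standard ordering $\{\theta_i\}_{i=0}^d$ the scalars $\theta_0,\theta_1,\dots,\theta_d$ are precisely the distinct eigenvalues of the semisimple map $A$, one for each eigenspace $V_i$ in Definition \ref{def:trid}(ii); hence $\theta_i\ne\theta_j$ whenever $i\ne j$. I would argue by contradiction, assuming $t^{2i}=1$ for some $i$ with $1\le i\le d$ and extracting from this a forbidden coincidence among the $\theta_j$.

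The key step is a short substitution. Fix $i$ with $1\le i\le d$ and suppose $t^{2i}=1$; then also $t^{-2i}=1$, since $t\ne 0$. Plugging into $\theta_i=a+bt^{2i-d}+ct^{d-2i}$ and writing $t^{2i-d}=t^{2i}t^{-d}=t^{-d}$ and $t^{d-2i}=t^d t^{-2i}=t^d$, I obtain $\theta_i=a+bt^{-d}+ct^d$. On the other hand $\theta_0=a+bt^{-d}+ct^d$, so $\theta_i=\theta_0$. Since $i\ge 1$, this contradicts the distinctness of the eigenvalues of $A$, and therefore $t^{2i}\ne 1$.

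Since the displayed reasoning uses nothing beyond the formula of Lemma \ref{thet} and the distinctness of the eigenvalues, it handles all admissible $i$ uniformly, and I do not expect any genuinely hard step. The only points needing care are the degenerate cases: for $d=0$ the claim is vacuous, while for $d=1$ it reduces to $t^2\ne 1$, which the same computation yields (if $t^2=1$ then $\theta_1=\theta_0$, a contradiction). The argument is moreover independent of the particular values of $b,c$: distinctness of the $\theta_j$ already forces $(b,c)\ne(0,0)$ when $d\ge 1$, but the collision $\theta_i=\theta_0$ is produced by $t^{2i}=1$ regardless of $b$ and $c$. This is consistent with the earlier remark that $t^4\ne 1$, which comes directly from $t^2+t^{-2}=\beta\ne\pm 2$ and holds for every $d$; the present statement is the corresponding sharper conclusion in the range $1\le i\le d$.
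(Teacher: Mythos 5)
Your argument is correct. Assuming $t^{2i}=1$ for some $1\le i\le d$ and substituting into the formulas of Lemma \ref{thet} gives $\theta_i=a+bt^{2i}t^{-d}+ct^{d}t^{-2i}=a+bt^{-d}+ct^{d}=\theta_0$, and since the scalars $\{\theta_j\}_{j=0}^d$ in a standard ordering are the eigenvalues attached to the distinct eigenspaces $\{V_j\}_{j=0}^d$ of the semisimple map $A$ from Definition \ref{def:trid}, they are mutually distinct; so $\theta_i=\theta_0$ with $i\ge 1$ is indeed a contradiction, independently of the values of $b$ and $c$. Note, however, that the paper does not prove this lemma at all: it simply cites \cite[Theorem~11.2]{tri}, where the statement is part of the classification machinery for tridiagonal pairs. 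So your route is genuinely different in presentation: you give a short, self-contained derivation that uses nothing beyond the eigenvalue formula already quoted in Lemma \ref{thet} and the distinctness of eigenvalues, which makes the claim verifiable locally within the paper; the citation instead defers to the general theory (where, in fairness, the underlying computation is of the same elementary flavor, but is embedded in a larger argument establishing the formulas themselves). Your observation that the case $d\le 1$ degenerates gracefully, and that no nonvanishing hypothesis on $b,c$ is needed, is accurate and worth keeping.
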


We now recall from \cite{dri} the Drinfel'd polynomial for a tridiagonal pair. We will restrict our attention to the case of type I. Until the end of Lemma {\ref {star}} assume that $d\ge 2$. With reference to Lemma {\ref {thet}}, for $1\le i\le d$ define $p_i\in\mathbb{F}[z]$ by
\begin{gather*}
p_i=(t^i-t^{-i})^2(bb^*t^{2i-2d}+cc^*t^{2d-2i}-z).
\end{gather*}
Define $P_{A,A^*}\in\mathbb{F}[z]$ by
\begin{gather}
P_{A,A^*}=e\sum_{i=0}^d \zeta_ip_{i+1}p_{i+2}\dots p_d, \label{dd}
\end{gather}
where $e=(-1)^d([d]_t^!)^{-2}(t-t^{-1})^{-2d}$ and $\{\zeta_i\}_{i=0}^d$ is the split sequence of $A,A^*$ with respect to the ordering $(\{\theta_i\}_{i=0}^d,\{\theta_i^*\}_{i=0}^d)$. By \cite[\rm Theorem~12.12]{dri}, the polynomial $P_{A,A^*}$ is independent of the choice of the standard orderings for the eigenvalues of $A$ and $A^*$. We call $P_{A,A^*}$ the {\it Drinfel'd polynomial} for the tridiagonal pair $A,A^*$.

\begin{note}
Our above definition of the Drinfel'd polynomial for the tridiagonal pair $A,A^*$ is slightly different from the one in \cite[\rm Definition~9.3]{dri}. The Drinfel'd polynomial for $A,A^*$ in \cite[\rm Definition~9.3]{dri} is equal to the right-hand side of (\ref {dd}) except that the factor $e$ is missing.
\end{note}

\begin{lem}
\label{star}
\rm
(See \cite[\rm Theorem~12.12]{dri}.)
\it
Let $A,A^*$ denote a tridiagonal pair on $V$ that has diameter at least $2$ and a base with type I. Then $P_{A,A^*}=P_{A^*,A}$.
\end{lem}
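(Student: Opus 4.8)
The plan is to exploit the fact that nearly every ingredient of the definition (\ref{dd}) of $P_{A,A^*}$ is already symmetric under interchanging the roles of $A$ and $A^*$, so that the asserted equality collapses to a single known symmetry of the remaining data. Throughout I would compare the pair $A,A^*$ with the pair $A^*,A$ using, for both, one fixed scalar $t$ with $t^2+t^{-2}=\beta$. This is legitimate: the hypotheses (diameter at least $2$ and a type~I base) let Lemma~\ref{thet} apply and make the parameters there uniquely determined by $t$; the two pairs share the diameter $d$ (as $d=\delta$ by the discussion following Definition~\ref{def:trid}); and by Definition~\ref{beta1} they share the base $\beta$, hence may be analyzed with the same $t$.

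First I would record how the parameters of Lemma~\ref{thet} transform. When we pass from $A,A^*$ to $A^*,A$, a standard ordering $\{\theta_i^*\}_{i=0}^d$ of the eigenvalues of $A^*$ becomes the first operator and $\{\theta_i\}_{i=0}^d$ the second. Applying Lemma~\ref{thet} to $A^*,A$ with the same $t$ and invoking the uniqueness clause, the sextuple $a,b,c,a^*,b^*,c^*$ attached to $A,A^*$ is replaced by $a^*,b^*,c^*,a,b,c$. Substituting this into the definition of $p_i$ gives
\[
p_i^{A^*,A}=(t^i-t^{-i})^2\bigl(b^*b\,t^{2i-2d}+c^*c\,t^{2d-2i}-z\bigr)=p_i,
\]
since $b^*b=bb^*$ and $c^*c=cc^*$; thus $p_1,\dots,p_d$ are unchanged by the interchange. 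The scalar $e=(-1)^d([d]_t^!)^{-2}(t-t^{-1})^{-2d}$ depends only on $d$ and $t$, which are common to the two pairs, so $e$ is unchanged as well.

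With $e$ and the $p_i$ now seen to be interchange-invariant, the whole discrepancy between $P_{A,A^*}$ and $P_{A^*,A}$ is concentrated in the split sequence. Writing $\{\zeta_i\}_{i=0}^d$ for the split sequence of $A,A^*$ and $\{\zeta_i'\}_{i=0}^d$ for that of $A^*,A$, the claim becomes
\[
\sum_{i=0}^d \zeta_i\,p_{i+1}p_{i+2}\cdots p_d=\sum_{i=0}^d \zeta_i'\,p_{i+1}p_{i+2}\cdots p_d.
\]
The split sequence genuinely changes under the interchange, because the product of operators defining $\{\zeta_i\}$ is not symmetric in $A,A^*$; so this last identity carries the real content and is the main obstacle. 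I would settle it by appealing to \cite[Theorem~12.12]{dri}, which asserts exactly this interchange-invariance for the Drinfel'd polynomial in the normalization of \cite[Definition~9.3]{dri}. By the note preceding Lemma~\ref{star}, that normalization differs from ours only by the omission of the common factor $e$; since $e$ is interchange-invariant, multiplying the equality of \cite{dri} through by $e$ yields $P_{A,A^*}=P_{A^*,A}$.
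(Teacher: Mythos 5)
Your proposal is correct and takes essentially the same route as the paper: the paper proves this lemma simply by citing \cite[Theorem~12.12]{dri}, with the note preceding the lemma accounting for the normalization difference (the factor $e$). Your additional verification that $e$ and the polynomials $p_i$ are invariant under interchanging $A$ and $A^*$ (via the uniqueness clause of Lemma~\ref{thet} and the shared base and diameter) is precisely the routine compatibility check that the paper leaves implicit in that citation.
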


\begin{1def}
\label{def:12.1}
(See \cite[\rm Definition~2.6]{non}.)
Let $d\in\mathbb{N}$ and let $A,A^*$ denote a tridiagonal pair on $V$ that has diameter $d$. Fix a nonzero $t\in \mathbb{F}$ such that $t^{4}\ne 1$. Then $A,A^*$ is called $t$-{\it geometric} whenever the sequence $\{t^{d-2i}\}_{i=0}^d$ is a standard ordering of the eigenvalues of $A$ and a standard ordering of the eigenvalues of $A^*$.
\end{1def}

Let $A,A^*$ denote a $t$-geometric tridiagonal pair on $V$. Define $\beta=t^2+t^{-2}$. By Definitions {\ref {beta1}} and {\ref {def:12.1}}, the scalar $\beta$ is a base of $A,A^*$. Observe that $\beta$ has type I.
\begin{thm}
\label{thm:12.2}
\rm
(See \cite[\rm Lemma~4.8]{non}.)
\it
\label{thm:12.3}
For $A,A^*\in {\rm End}(V)$ the following are equivalent:
\begin{enumerate}
\item[\rm(i)] the pair $A,A^*$ acts on $V$ as a $q$-geometric tridiagonal pair;
\item[\rm(ii)] the vector space $V$ is a NonNil irreducible $U_q^+$-module of type $(1,1)$ on which the standard generators $x,y$ act as $A,A^*$ respectively.
\end{enumerate}
\end{thm}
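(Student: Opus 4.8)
The plan is to prove the two implications separately, in each case using Lemma \ref{lem:81} to translate between the $q$-Serre relations that define $U_q^+$ (Definition \ref{def:61}) and the three-term tridiagonal action condition, and using Lemma \ref{lem:64} together with Definitions \ref{def:trid} and \ref{def:12.1} to match up the eigenvalue data. Here $q$-geometric means $t$-geometric with $t=q$.

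First I would prove (ii) $\Rightarrow$ (i). Assume $V$ is a NonNil irreducible $U_q^+$-module of type $(1,1)$ with $x,y$ acting as $A,A^*$. By Lemma \ref{lem:64} the maps $A,A^*$ are semisimple, each with set of distinct eigenvalues $\{q^{d-2i}\}_{i=0}^d$; this verifies Definition \ref{def:trid}(i) and supplies the eigenvalue sequences demanded by the $q$-geometric property. The first defining relation of $U_q^+$ asserts that $A^3A^*-[3]_qA^2A^*A+[3]_qAA^*A^2-A^*A^3$ is identically zero on $V$, hence vanishes on each eigenspace $V_A(\theta)$. By Lemma \ref{lem:81} this forces $A^*V_A(\theta)\subseteq V_A(q^2\theta)+V_A(\theta)+V_A(q^{-2}\theta)$ for each eigenvalue $\theta=q^{d-2i}$, which is precisely Definition \ref{def:trid}(ii) relative to the ordering $\{q^{d-2i}\}_{i=0}^d$; the second relation of $U_q^+$ gives Definition \ref{def:trid}(iii) in the same way with the roles of $A,A^*$ interchanged. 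Condition Definition \ref{def:trid}(iv) holds because $U_q^+$ is generated by $x,y$, so an $A,A^*$-invariant subspace is a $U_q^+$-submodule and $V$ is irreducible. Thus $A,A^*$ is a tridiagonal pair, and since $\{q^{d-2i}\}_{i=0}^d$ is a standard ordering of eigenvalues for both, it is $q$-geometric by Definition \ref{def:12.1}.

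For (i) $\Rightarrow$ (ii) I would run the same argument in reverse. Assume $A,A^*$ is $q$-geometric. Then $A$ is semisimple with distinct eigenvalues $\{q^{d-2i}\}_{i=0}^d$, and the standard ordering yields $A^*V_A(\theta)\subseteq V_A(q^2\theta)+V_A(\theta)+V_A(q^{-2}\theta)$ for each eigenvalue $\theta$. By the reverse direction of Lemma \ref{lem:81} the $q$-Serre expression in $A,A^*$ vanishes on each eigenspace of $A$; since $A$ is semisimple its eigenspaces span $V$, so the expression vanishes on all of $V$, and the reversed relation follows symmetrically from Definition \ref{def:trid}(iii). Hence $x\mapsto A$, $y\mapsto A^*$ respects the relations of Definition \ref{def:61}, giving a $U_q^+$-module structure on $V$. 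This module is NonNil because the eigenvalues $q^{d-2i}$ are nonzero, so $A,A^*$ are invertible and in particular not nilpotent; it is irreducible by Definition \ref{def:trid}(iv); and it has type $(1,1)$ since the eigenvalue set $\{q^{d-2i}\}_{i=0}^d$ of each of $A,A^*$ has the form $\{\gamma q^{d-2i}\}$ with $\gamma=1$.

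Given Lemma \ref{lem:81}, the result carries little deep difficulty: it is essentially a restatement of that lemma together with the matching of definitions. The one point requiring genuine care is that the tridiagonal ordering is indexed by the eigenvalues $q^{d-2i}$ and must align exactly with the $q^{\pm 2}$ shifts appearing in Lemma \ref{lem:81}; this index/shift alignment is where the $q$-geometric hypothesis is used essentially. Everything else (semisimplicity, the NonNil and irreducibility conclusions, and the type computation) is read off directly from Lemma \ref{lem:64}, Definition \ref{def:trid}, and Definition \ref{def:12.1}.
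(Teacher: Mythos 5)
Your proof is correct, but it takes a different route from the paper: the paper supplies no argument for this theorem at all, importing it verbatim as Lemma~4.8 of \cite{non}, whereas you reconstruct it from lemmas the paper already states. Your translation is the right one. In (ii) $\Rightarrow$ (i), Lemma~\ref{lem:64} with $\gamma=\gamma'=1$ gives semisimplicity and the eigenvalue sets $\{q^{d-2i}\}_{i=0}^d$ for both generators (with the same $d$), Lemma~\ref{lem:81} applied with $(C,D)=(A,A^*)$ and then $(A^*,A)$ converts the cubic $q$-Serre relations into the three-term inclusions of Definition~\ref{def:trid}(ii),(iii) --- the boundary cases are harmless since $q^{\pm(d+2)}$ are not eigenvalues, $q$ not being a root of unity --- and irreducibility of the $U_q^+$-module yields Definition~\ref{def:trid}(iv) because $U_q^+$ is generated by $x,y$. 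In (i) $\Rightarrow$ (ii), the reverse direction of Lemma~\ref{lem:81} together with semisimplicity (the eigenspaces of $A$, resp.\ $A^*$, span $V$) shows that both defining relations of Definition~\ref{def:61} hold on all of $V$, so the universal property of the presentation gives the module structure; NonNil follows since a semisimple operator with nonzero eigenvalues is invertible, irreducibility follows from Definition~\ref{def:trid}(iv), and the eigenvalue sets force type $(1,1)$ (here you implicitly use that the type in Lemma~\ref{lem:64} is well defined, i.e.\ that the set $\{\gamma q^{d-2i}\}_{i=0}^d$ determines $\gamma$; this holds because $q$ is not a root of unity, and the paper treats it as part of the definition). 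The citation buys the paper brevity; your argument buys self-containedness, exhibiting the equivalence as a formal consequence of Lemma~\ref{lem:81} and Lemma~\ref{lem:64}, which is essentially how the cited source argues as well.
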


For the duration of this paragraph assume that $d\ge 2$. Let $V$ denote a NonNil finite-dimensional irreducible $U_q^+$-module of type $(1,1)$ and diameter $d$. By Theorem {\ref {thm:12.3}}, the pair $x,y$ acts on $V$ as a $q$-geometric tridiagonal pair. The polynomials $P_V$, $Q_V$ and $P_{x,y}$ are associated with $V$. By Lemma {\ref {lem:54}} and Definition {\ref {driuq}}, the polynomials $P_V$, $Q_V$ are partners. Therefore $z^dQ_V(z^{-1})$ is a scalar multiple of $P_V$.
\begin{lem}
\label{lem:100}
Assume that $d\ge 2$. Let $V$ denote a NonNil finite-dimensional irreducible $U_q^+$-module of type $(1,1)$ and diameter $d$. Then
\begin{gather*}
P_{x,y}(z)=z^dQ_V(z^{-1}).
\end{gather*}
\end{lem}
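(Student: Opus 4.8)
The plan is to exploit the symmetry of the Drinfel'd polynomial of a tridiagonal pair together with a direct computation of the split sequence in terms of the weight-space data of the underlying $U_q(L(\mathfrak{sl}_2))$-module. By the paragraph preceding the statement (see Theorem {\ref{thm:12.3}}), the pair $x,y$ acts on $V$ as a $q$-geometric tridiagonal pair $A,A^*$; its base is $\beta=q^2+q^{-2}$, which has type I since $q$ is not a root of unity. As $d\ge 2$, Lemma {\ref{star}} gives $P_{x,y}=P_{y,x}$, so it suffices to compute $P_{y,x}$, i.e.\ to treat $A=y$ and $A^*=x$. Recall from Lemma {\ref{lem:lp2}} that on $V$ the generators act as $x=X_{01}$ and $y=X_{23}$, with $X_{01}=K_0+q(q-q^{-1})K_0e_0^-$ and $X_{23}=K_1+q(q-q^{-1})K_1e_1^-$.

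For the pair $A=X_{23}$, $A^*=X_{01}$ I would fix, via Lemma {\ref{thet}} with $t=q$, the standard orderings $\theta_i=q^{d-2i}$ (eigenvalues of $A$) and $\theta_i^*=q^{2i-d}$ (eigenvalues of $A^*$). Matching these against $\theta_i=a+bt^{2i-d}+ct^{d-2i}$ and $\theta_i^*=a^*+b^*t^{2i-d}+c^*t^{d-2i}$ forces $b=0,c=1$ and $b^*=1,c^*=0$, so $bb^*=cc^*=0$ and the factors of (\ref{dd}) collapse to $p_i=-(q^i-q^{-i})^2z$.

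The crux is the split sequence $\{\zeta_i\}_{i=0}^d$ for this ordering. Let $\{U_i\}_{i=0}^d$ be the weight-space decomposition of $V$ from Lemma {\ref{lem:dec}} (with $\gamma=1$), so $K_0$ acts on $U_i$ as $q^{2i-d}$ and $K_1$ as $q^{d-2i}$, and recall $e_0^-U_i\subseteq U_{i-1}$, $e_1^-U_i\subseteq U_{i+1}$. Since $q^{-d}$ occurs as a $K_0$-eigenvalue only on $U_0$ and $\dim U_0=1$ (Lemma {\ref{lem:415}}), the eigenspace $V_0^*$ of $A^*=X_{01}$ associated with $\theta_0^*=q^{-d}$ equals $U_0$. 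I would then show by induction, using the two displayed formulas for $X_{01},X_{23}$ together with the fact that $(X_{23}-\theta_jI)$ (resp.\ $(X_{01}-\theta_j^*I)$) annihilates the diagonal part on $U_j$, that
\begin{gather*}
(A-\theta_{i-1}I)\cdots(A-\theta_0I)\big|_{U_0}=(q-q^{-1})^iq^{i(d-i)}(e_1^-)^i,\\
(A^*-\theta_1^*I)\cdots(A^*-\theta_i^*I)\big|_{U_i}=(q-q^{-1})^iq^{-i(d-i)}(e_0^-)^i,
\end{gather*}
the first mapping $U_0\to U_i$ and the second $U_i\to U_0$. Composing, the powers of $q$ cancel, and Definition {\ref{def:51b}} yields $\zeta_i=(q-q^{-1})^{2i}\mu_i$, where $\mu_i$ is the eigenvalue of $(e_0^-)^i(e_1^-)^i$ on $U_0$.

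Finally I would substitute $p_i$, $\zeta_i$, and $e=(-1)^d([d]_q^!)^{-2}(q-q^{-1})^{-2d}$ into (\ref{dd}). Using $q^k-q^{-k}=(q-q^{-1})[k]_q$, the product $\prod_{k=i+1}^d p_k$ equals $(-1)^{d-i}z^{d-i}(q-q^{-1})^{2(d-i)}([d]_q^!/[i]_q^!)^2$, and after cancellation the $i$th summand becomes $(-1)^i\mu_iz^{d-i}/([i]_q^!)^2$. Hence $P_{y,x}(z)=\sum_{i=0}^d(-1)^i\mu_iz^{d-i}/([i]_q^!)^2=z^dQ_V(z^{-1})$ by Definition {\ref{def:52b}}, and $P_{x,y}=P_{y,x}$ finishes the proof. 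I expect the split-sequence computation to be the main obstacle: identifying $V_0^*$ with $U_0$ and carrying out the telescoping that converts the alternating products of $(A-\theta_jI)$ and $(A^*-\theta_j^*I)$ into the single monomial $(e_0^-)^i(e_1^-)^i$, while keeping careful track of the scalars that ultimately cancel.
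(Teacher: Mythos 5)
Your proposal is correct and takes essentially the same route as the paper: reduce to $P_{y,x}$ via Lemma \ref{star}, read off $b=c^*=0$, $c=b^*=1$ from Lemma \ref{thet}, identify $V_0^*$ with the weight space $U_0$, show the split sequence satisfies $\zeta_i=(q-q^{-1})^{2i}\mu_i$, and substitute into (\ref{dd}). The only cosmetic difference is that you telescope the factors $(A-\theta_jI)$, $(A^*-\theta_j^*I)$ directly from the Chevalley-generator formulas for $X_{23},X_{01}$ (with the powers $q^{\pm i(d-i)}$ cancelling), whereas the paper organizes the identical computation through the equitable-generator identity $(e_0^-)^i(e_1^-)^i=(q-q^{-1})^{-2i}(X_{01}-X_{31})^i(X_{23}-X_{13})^i$.
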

\begin{proof}
By Lemma {\ref {star}} it suffices to show that $P_{y,x}(z)=z^dQ_V(z^{-1})$. By Theorem {\ref{rmk:10.4}} the vector space $V$ becomes a $\square_q$-module on which $x=x_0$ and $y=x_2$. This $\square_q$-module $V$ is irreducible of type $1$. let $\{W_i\}_{i=0}^d$ (resp. $\{W_i^*\}_{i=0}^d$) denote the decomposition $[2,3]$ (resp. $[1,0]$) of the $\square_q$-module $V$. Observe that $\{W_i\}_{i=0}^d$ (resp. $\{W_i^*\}_{i=0}^d$) is a standard ordering of the eigenspaces of $y$ (resp. $x$). With respect to this ordering, we have
$a=a^*=0$, $b=c^*=0$, $c=b^*=1$ in view of Lemma {\ref {thet}}. By Theorem {\ref {thm:101}} the $\square_q$-module $V$ becomes a $U_q(L(\mathfrak{sl}_2))$-module on which $x_0=X_{01}$ and $x_2=X_{23}$.
This $U_q(L(\mathfrak{sl}_2))$-module $V$ is irreducible of type $1$ and $P_V(1)\ne 0$. By Definition {\ref {def:lp1}} and ({\ref {42}}), ({\ref {44}}) we have
\begin{gather}
(e_0^-)^i(e_1^-)^i=(q-q^{-1})^{-2i}(X_{01}-X_{31})^i(X_{23}-X_{13})^i\qquad (i\in\mathbb{N}).\label{10121}
\end{gather}
Let $\{U_i\}_{i=0}^{d}$ denote the weight space decomposition of the $U_q(L(\mathfrak{sl}_2))$-module $V$ from Lemma {\ref {lem:dec}}. Observe that $U_0=W_0^*$. By Definition {\ref {def:51b}}, the left-hand side of (\ref {10121}) acts on $W_0^*$ as $\mu_i I$. By ({\ref {42}}), ({\ref {44}}) and Lemma {\ref {lem:dec}}, we have $(X_{01}-X_{31})U_i\subseteq U_{i-1}$ and $(X_{23}-X_{13})U_i\subseteq U_{i+1}$ for $0\le i\le d$. By ({\ref {44}}) and Lemma {\ref {lem:dec}} along with the fact that $x=X_{01}$ and $y=X_{23}$ on $V$, we have $X_{01}-X_{31}=x-q^{2i-d}I$ and $X_{23}-X_{13}=y-q^{d-2i}I$ on $U_i$ for $0\le i\le d$. By these comments, the following holds on $W_0^*$ for $0\le i\le d$:
\begin{gather}
\begin{split}
(X_{01}-X_{31})^i(X_{23}-X_{13})^i &=(x-q^{2-d}I)(x-q^{4-d}I)\dots (x-q^{2i-d}I)\\
&(y-q^{d-2i+2}I)\dots (y-q^{d-2}I)(y-q^{d}I).\label{10122}
\end{split}
\end{gather}
Let $\{\zeta_i\}_{i=0}^d$ denote the split sequence for the tridiagonal pair $y,x$ with respect to the orderings $\{W_i\}_{i=0}^d$, $\{W_i^*\}_{i=0}^d$. By the definition of split sequence above Lemma {\ref {beta}}, the right-hand side of (\ref {10122}) acts on $W_0^*$ as $\zeta_i I$.
By the above comments, we have $\zeta_i=(q-q^{-1})^{2i}\mu_i$. By this and Definition {\ref {def:52b}} along with (\ref {dd}), we routinely obtain $P_{y,x}(z)=z^dQ_V(z^{-1})$. The result follows.
\end{proof}

Let $V$ denote a finite-dimensional irreducible $\square_q$-module of type $1$. Via Theorem {\ref{rmk:10.4}}, the vector space $V$ becomes a NonNil finite-dimensional irreducible $U_q^+$-module of type $(1,1)$.
\begin{lem}
\label{lem:13.8}
Assume that $d\ge 2$. Let $V$ denote a finite-dimensional irreducible $\square_q$-module of type $1$ and diameter $d$. Then
\begin{gather*}
P_{x_0,x_2}(z)=z^dQ_V(z^{-1}).
\end{gather*}
\end{lem}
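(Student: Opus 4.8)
The plan is to deduce this directly from Lemma~\ref{lem:100} by transporting everything across the bijection $\kappa^\sharp$. First I would recall, via Theorem~\ref{rmk:10.4}, that pulling back the $\square_q$-module structure on $V$ through $\kappa$ turns $V$ into a NonNil finite-dimensional irreducible $U_q^+$-module of type $(1,1)$ and the same diameter $d$. The crucial observation is that $\kappa$ was defined to send the standard generators $x,y$ of $U_q^+$ to $x_0,x_2$ respectively. Consequently, as elements of $\mathrm{End}(V)$ the operator $x$ literally equals $x_0$ and the operator $y$ literally equals $x_2$; there is no further identification to make beyond unwinding the definition of the pullback.

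Next I would note that, since $x=x_0$ and $y=x_2$ act as the \emph{same} pair of linear maps on $V$, the associated $q$-geometric tridiagonal pairs coincide. By Theorem~\ref{thm:12.3} the pair $x,y$ (equivalently $x_0,x_2$) acts on $V$ as a $q$-geometric tridiagonal pair, which has diameter $d\ge 2$ and a base of type~I, so its Drinfel'd polynomial is defined. Because the Drinfel'd polynomial of a tridiagonal pair depends only on the pair of operators, we get $P_{x,y}=P_{x_0,x_2}$.

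It remains to match the polynomial $Q_V$ on the two sides. Here I would invoke Definition~\ref{def:104}: the polynomial $Q_V$ attached to the $\square_q$-module $V$ is defined to be exactly the polynomial $Q_V$ attached to the corresponding $U_q^+$-module obtained from $V$ via Theorem~\ref{rmk:10.4}. Thus the two instances of $Q_V$ agree. Applying Lemma~\ref{lem:100} to this NonNil irreducible $U_q^+$-module of type $(1,1)$ and diameter $d$ yields $P_{x,y}(z)=z^dQ_V(z^{-1})$, and substituting $P_{x,y}=P_{x_0,x_2}$ gives the desired $P_{x_0,x_2}(z)=z^dQ_V(z^{-1})$.

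I do not anticipate a genuine obstacle here: the statement is essentially a restatement of Lemma~\ref{lem:100} under the dictionary $x\leftrightarrow x_0$, $y\leftrightarrow x_2$ supplied by $\kappa$. The only point requiring care is bookkeeping, namely verifying that the Drinfel'd polynomial $P_{x_0,x_2}$ and the polynomial $Q_V$ computed on the $\square_q$-side are identified with their $U_q^+$-counterparts; both identifications are immediate from the fact that $\kappa$ sends $x,y$ to $x_0,x_2$ and from Definition~\ref{def:104}.
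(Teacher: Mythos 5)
Your proposal is correct and is exactly the paper's argument: the paper's proof is the one-line ``Apply Lemma~\ref{lem:100} with $x=x_0$ and $y=x_2$,'' relying on the same identifications via Theorem~\ref{rmk:10.4} and Definition~\ref{def:104} that you spell out. Your write-up simply makes the bookkeeping explicit; no difference in substance.
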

\begin{proof}
Apply Lemma {\ref {lem:100}} with $x=x_0$ and $y=x_2$.
\end{proof}

\begin{lem}
\label{lem:13.5}
Let $V$ denote a finite-dimensional irreducible $\square_q$-module of type $1$. Then for $i\in\mathbb{Z}_4$ the $\square_q$ generators $x_i,x_{i+2}$ act on $V$ as a $q$-geometric tridiagonal pair.
\end{lem}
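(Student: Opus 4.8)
The plan is to prove Lemma~\ref{lem:13.5} by reducing the case of an arbitrary $i\in\mathbb{Z}_4$ to the already-established case $i=0$, and then verifying the case $i=0$ directly from the characterization of $q$-geometric tridiagonal pairs in Theorem~\ref{thm:12.3}. For the case $i=0$, I would argue as follows. Let $V$ denote a finite-dimensional irreducible $\square_q$-module of type $1$. By Lemma~\ref{thm:10.2}, pulling back via $\kappa$ turns $V$ into a NonNil finite-dimensional irreducible $U_q^+$-module of type $(1,1)$, on which the standard generators $x,y$ act as $x_0,x_2$ respectively. By Theorem~\ref{thm:12.3} (applied in the direction (ii)$\Rightarrow$(i)), it follows immediately that $x,y$, and hence $x_0,x_2$, act on $V$ as a $q$-geometric tridiagonal pair. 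This settles the generator pair $x_0,x_2$.

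For the remaining values of $i$, the natural tool is the order-$4$ automorphism $\rho$ from Lemma~\ref{lem:rho}, which sends $x_j\mapsto x_{j+1}$. The idea is that twisting $V$ via a suitable power of $\rho$ carries the pair $x_0,x_2$ to the pair $x_i,x_{i+2}$. Concretely, I would consider the twisted $\square_q$-module $^{\rho^{i}}V$: for $a\in\square_q$ and $v\in V$, the action of $a$ on $^{\rho^{i}}V$ is the action of $\rho^{i}(a)$ on $V$. Since $\rho^{i}(x_0)=x_i$ and $\rho^{i}(x_2)=x_{i+2}$, the pair $x_0,x_2$ acting on the twisted module $^{\rho^{i}}V$ is, as a pair of endomorphisms of the underlying vector space $V$, exactly the pair $x_i,x_{i+2}$ acting on the original module $V$. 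Thus it suffices to observe that $^{\rho^{i}}V$ is again a finite-dimensional irreducible $\square_q$-module of type $1$, so that the case $i=0$ applies to it.

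The one point requiring care is the type condition. Twisting preserves irreducibility and finite dimensionality since $\rho$ is an automorphism, so $^{\rho^{i}}V$ is a finite-dimensional irreducible $\square_q$-module. As noted in the comments below Lemma~\ref{lem:gam}, the twist $^{\rho}V$ has type $\gamma^{-1}$ when $V$ has type $\gamma$; hence for type-$1$ modules every power $^{\rho^{i}}V$ again has type $1^{\pm 1}=1$. Applying the already-proved case $i=0$ of the lemma to $^{\rho^{i}}V$, the pair $x_0,x_2$ acts on $^{\rho^{i}}V$ as a $q$-geometric tridiagonal pair. Translating back through the identification of actions described above, the pair $x_i,x_{i+2}$ acts on $V$ as a $q$-geometric tridiagonal pair, as desired.

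I expect the main obstacle to be purely bookkeeping rather than conceptual: one must be scrupulous about the direction of the twist and the indexing, namely that the action of $x_i,x_{i+2}$ on $V$ coincides with the action of $x_0,x_2$ on $^{\rho^{i}}V$ and not the reverse, and that the $q$-geometric property (which is a statement about the two endomorphisms acting on a fixed vector space, by Definition~\ref{def:12.1}) transfers correctly under this identification. Since the $q$-geometric condition depends only on the pair of operators on the underlying space and not on the ambient module structure, this transfer is immediate once the identification of actions is stated precisely.
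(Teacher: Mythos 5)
Your proposal is correct and follows essentially the same route as the paper: establish the case $i=0$ via the pullback through $\kappa$ (Lemma~\ref{thm:10.2}/Theorem~\ref{rmk:10.4}) together with the characterization in Theorem~\ref{thm:12.2}, then handle the remaining $i$ by applying this case to the twisted modules $^{\rho^i}V$, which remain irreducible of type $1$. The extra bookkeeping you spell out about the direction of the twist and the preservation of type is exactly what the paper's terser proof implicitly relies on.
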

\begin{proof}
By Theorems {\ref {rmk:10.4}} and {\ref {thm:12.2}}, the pair $x_0,x_2$ acts on $V$ as a $q$-geometric tridiagonal pair. Applying this to $^{\rho}V$, $^{\rho^2}V$, $^{\rho^3}V$ we obtain the remaining assertions.
\end{proof}

\section{The proof of Theorem {\ref {thm:37}}}
Let $V$ denote a finite-dimensional irreducible $\square_q$-module of type $1$. In the previous section we showed that for $i\in\mathbb{Z}_4$ the $\square_q$ generators $x_i,x_{i+2}$ act on $V$ as a $q$-geometric tridiagonal pair. In this section we will describe the relations among the polynomials
\begin{gather*}
P_{x_i,x_{i+2}},\qquad  P_{^{\rho^i}V} ,\qquad Q_{^{\rho^i}V} \qquad (i\in\mathbb{Z}_4).
\end{gather*}
Using these results, we prove Theorem {\ref {thm:37}}.

\begin{lem}
\label{lem:13.7}
Assume that $d\ge 2$. Let $V$ denote a finite-dimensional irreducible $\square_q$-module of type $1$ and diameter $d$. Then
\begin{gather*}
P_{x_1,x_3}(z)=z^dP_V(z^{-1}).
\end{gather*}
\end{lem}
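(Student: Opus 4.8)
The plan is to mirror the proof of Lemma~\ref{lem:100}, replacing the lowering generators $e_0^-,e_1^-$ (which there produced $Q_V$) by the raising generators $e_0^+,e_1^+$ (which will produce $P_V$), and replacing the pair $x_0,x_2$ by the pair $x_1,x_3$. By Lemma~\ref{lem:13.5} the generators $x_1,x_3$ act on $V$ as a $q$-geometric tridiagonal pair, so $P_{x_1,x_3}$ is defined; since $d\ge 2$ this pair has a base of type I, so by Lemma~\ref{star} we have $P_{x_1,x_3}=P_{x_3,x_1}$, and it suffices to compute $P_{x_3,x_1}$ with $A=x_3$, $A^*=x_1$.

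First I would realize $V$ as a $U_q(L(\mathfrak{sl}_2))$-module via Theorem~\ref{thm:101}, so that $x_i$ acts as $X_{i,i+1}$ for $i\in\mathbb{Z}_4$. Using the inverse isomorphism of Lemma~\ref{lem:lp2}, namely $e_1^+=(q-q^{-1})^{-1}(X_{13}-X_{12})$ and $e_0^+=(q-q^{-1})^{-1}(X_{31}-X_{30})$, together with $X_{12}=x_1$, $X_{30}=x_3$, $X_{13}=K_1$, $X_{31}=K_0$, I obtain the analogue of (\ref{10121}):
\begin{gather*}
(e_1^+)^i(e_0^+)^i=(q-q^{-1})^{-2i}(K_1-x_1)^i(K_0-x_3)^i\qquad(i\in\mathbb{N}).
\end{gather*}
Let $\{U_i\}_{i=0}^d$ denote the weight space decomposition from Lemma~\ref{lem:dec}. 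By Definition~\ref{def:51} the left-hand side acts on $U_0$ as $\sigma_iI$, so $(K_1-x_1)^i(K_0-x_3)^i$ acts on $U_0$ as $(q-q^{-1})^{2i}\sigma_iI$.

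Next I would choose standard orderings for the tridiagonal pair $x_3,x_1$ so that the split-sequence operator coincides with the operator just computed. Take $\{W_j\}_{j=0}^d$ to be the decomposition $[0,3]$ (eigenvalues $\theta_j=q^{2j-d}$ of $x_3$) and $\{W_j^*\}_{j=0}^d$ to be the decomposition $[1,2]$ (eigenvalues $\theta_j^*=q^{d-2j}$ of $x_1$); with respect to these orderings Lemma~\ref{thet} (with $t=q$) gives $a=a^*=0$ and $b=c^*=1$, $c=b^*=0$, so in particular $bb^*=cc^*=0$. Since $e_1^+U_0\subseteq U_{-1}=0$ forces $x_1=K_1=q^d$ on $U_0$, the component $W_0^*$ equals $U_0$. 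Evaluating $K_0,K_1$ on each $U_j$ via Lemma~\ref{lem:dec}, the powers telescope into
\begin{gather*}
(K_1-x_1)^i(K_0-x_3)^i\big|_{U_0}=(x_1-\theta_1^*I)\cdots(x_1-\theta_i^*I)(x_3-\theta_{i-1}I)\cdots(x_3-\theta_0I),
\end{gather*}
where the two accumulated sign factors $(-1)^i$ cancel. By the definition of the split sequence above Lemma~\ref{beta}, the right-hand side acts on $W_0^*=U_0$ as $\zeta_iI$; hence $\zeta_i=(q-q^{-1})^{2i}\sigma_i$.

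Finally I would substitute $\zeta_i=(q-q^{-1})^{2i}\sigma_i$ into the Drinfel'd polynomial formula (\ref{dd}). Because $bb^*=cc^*=0$, each factor collapses to $p_i=-(q^i-q^{-i})^2z$, and the computation becomes formally identical to the one in Lemma~\ref{lem:100} but with $\sigma_i$ in place of $\mu_i$; it yields
\begin{gather*}
P_{x_3,x_1}=\sum_{i=0}^d\frac{(-1)^i\sigma_iz^{d-i}}{([i]_q^!)^2}=z^dP_V(z^{-1})
\end{gather*}
in view of Definition~\ref{def:52}. Combined with $P_{x_1,x_3}=P_{x_3,x_1}$ this gives the claim. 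The main obstacle is the bookkeeping in the telescoping step: one must pick the standard orderings so that $W_0^*=U_0$ and verify that the collapse of $(K_1-x_1)^i(K_0-x_3)^i$ reproduces the split-sequence operator with the correct overall sign, so that $\sigma_i$ (and therefore $P_V$, rather than $Q_V$) is what emerges. Once $\zeta_i=(q-q^{-1})^{2i}\sigma_i$ is established, the remainder is the routine already carried out in the proof of Lemma~\ref{lem:100}.
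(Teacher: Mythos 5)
Your proposal is correct and follows essentially the same route as the paper's proof: reduce to $P_{x_3,x_1}$ via Lemma \ref{star}, use the standard orderings given by the decompositions $[0,3]$ and $[1,2]$ (with $a=a^*=0$, $b=c^*=1$, $c=b^*=0$), pass to the $U_q(L(\mathfrak{sl}_2))$-module structure via Theorem \ref{thm:101}, identify $(e_1^+)^i(e_0^+)^i$ with the split-sequence operator by telescoping over the weight spaces to get $\zeta_i=(q-q^{-1})^{2i}\sigma_i$, and substitute into (\ref{dd}). Your intermediate expression $(K_1-x_1)^i(K_0-x_3)^i$ differs from the paper's $(X_{12}-X_{13})^i(X_{30}-X_{31})^i$ only by the two cancelling sign factors you note, and your justification that $U_0=W_0^*$ is if anything more explicit than the paper's.
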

\begin{proof}
By Lemma {\ref {star}} it suffices to show that $P_{x_3,x_1}(z)=z^dP_V(z^{-1})$. Let $\{W_i\}_{i=0}^d$ (resp. $\{W_i^*\}_{i=0}^d$) denote the decomposition $[0,3]$ (resp. $[1,2]$) of the $\square_q$-module $V$. Observe that $(\{W_i\}_{i=0}^d$  (resp. $\{W_i^*\}_{i=0}^d$) is a standard ordering of the eigenspaces of $x_3$ (resp. $x_1$). With respect to this ordering, we have $a=a^*=0$, $b=c^*=1$, $c=b^*=0$ in view of Lemma {\ref {thet}}. By Theorem {\ref {thm:101}} the $\square_q$-module $V$ becomes a $U_q(L(\mathfrak{sl}_2))$-module on which $x_3=X_{30}$ and $x_1=X_{12}$.
This $U_q(L(\mathfrak{sl}_2))$-module $V$ is irreducible of type $1$ and $P_V(1)\ne 0$. By Definition {\ref {def:lp1}} and ({\ref {43}}), ({\ref {44}}) we have
\begin{gather}
(e_1^+)^i(e_0^+)^i=(q-q^{-1})^{-2i}(X_{12}-X_{13})^i(X_{30}-X_{31})^i \qquad (i\in\mathbb{N}).\label{1111}
\end{gather}
Let $\{U_i\}_{i=0}^{d}$ denote the weight space decomposition of the $U_q(L(\mathfrak{sl}_2))$-module $V$ from Lemma {\ref {lem:dec}}. Observe that $U_0=W_0^*$. By Definition {\ref {def:51}} the right-hand side of (\ref {1111}) acts on $W_0^*$ as $\sigma_i I$. By ({\ref {43}}), ({\ref {44}}) and Lemma {\ref {lem:dec}}, we have $(X_{12}-X_{13})U_i\subseteq U_{i-1}$ and $(X_{30}-X_{31})U_i\subseteq U_{i+1}$ for $0\le i\le d$. By ({\ref {44}}) and Lemma {\ref {lem:dec}} along with the fact that $x_3=X_{30}$ and $x_1=X_{12}$ on $V$, we have $X_{12}-X_{13}=x_1-q^{d-2i}I$ and $X_{30}-X_{31}=x_3-q^{2i-d}I$ on $U_i$ for $0\le i\le d$. By these comments, the following holds on $W_0^*$ for $0\le i\le d$:
\begin{gather}
\begin{split}
(X_{12}-X_{13})^i(X_{30}-X_{31})^i&=(x_1-q^{d-2}I)(x_1-q^{d-4}I)\dots (x_1-q^{d-2i}I)\\
&(x_3-q^{2i-2-d}I)\dots (x_3-q^{2-d}I)(x_3-q^{-d}I).\label{1112}
\end{split}
\end{gather}
Let $\{\zeta_i\}_{i=0}^d$ denote the split sequence for the tridiagonal pair $x_3,x_1$ with respect to the orderings $\{W_i\}_{i=0}^d$, $\{W_i^*\}_{i=0}^d$. By the definition of split sequence above Lemma {\ref {beta}}, the right-hand side of (\ref {1112}) acts on $W_0^*$ as $\zeta_i I$. By the above comments, we have $\zeta_i=(q-q^{-1})^{2i}\sigma_i$. By this and Definition {\ref {def:52}} along with (\ref {dd}), we routinely obtain $P_{x_3,x_1}(z)=z^dP_V(z^{-1})$. The result follows.
\end{proof}

\begin{pro}
\label{table}
Assume that $d\ge 2$. Let $V$ denote a finite-dimensional irreducible $\square_q$-module of type $1$ and diameter $d$. Then the following six polynomials coincide:
\begin{gather*}
P_{x_0,x_2}(z),\qquad z^dP_{^{\rho}V}(z^{-1}),\qquad z^dQ_V(z^{-1}),\\
P_{x_2,x_0}(z),\qquad z^dP_{^{\rho^3}V}(z^{-1}),\qquad z^dQ_{^{\rho^2} V}(z^{-1}).
\end{gather*}
Moreover the the following six polynomials coincide:
\begin{gather*}
P_{x_1,x_3}(z),\qquad z^dP_{V}(z^{-1}),\qquad z^dQ_{^{\rho}V}(z^{-1}),\\
P_{x_3,x_1}(z),\qquad z^dP_{^{\rho^2}V}(z^{-1}),\qquad z^dQ_{^{\rho^3} V}(z^{-1}).
\end{gather*}
\end{pro}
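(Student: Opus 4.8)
The plan is to derive all twelve identities mechanically by applying Lemmas \ref{lem:13.7}, \ref{lem:13.8}, and \ref{star} to the four twisted modules $^{\rho^j}V$, $j\in\mathbb{Z}_4$, and then chaining the resulting equalities. The one structural fact that makes everything fit together is the behaviour of the tridiagonal pairs under twisting, so I would record it first. By the definition of twisting and since $\rho$ sends $x_i\mapsto x_{i+1}$, on the module $^{\rho^j}V$ the generator $x_i$ acts exactly as $x_{i+j}$ acts on $V$. Because $V$ and $^{\rho^j}V$ share the same underlying vector space, the ordered pair $(x_i,x_{i+2})$ acting on $^{\rho^j}V$ is, as a pair of elements of $\mathrm{End}(V)$, identical to the pair $(x_{i+j},x_{i+j+2})$ acting on $V$. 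Hence their Drinfel'd polynomials coincide: $P_{x_i,x_{i+2}}$ computed on $^{\rho^j}V$ equals $P_{x_{i+j},x_{i+j+2}}$ computed on $V$. I would also note at the outset that each $^{\rho^j}V$ is again a finite-dimensional irreducible $\square_q$-module of type $1$ and diameter $d$ (by the comments following Lemma \ref{lem:gam}, twisting by $\rho$ fixes the diameter and inverts the type, and $1^{-1}=1$), and that by Lemma \ref{lem:13.5} the pairs $(x_0,x_2)$ and $(x_1,x_3)$ are $q$-geometric, so all three lemmas above apply verbatim.

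With this in hand I would run the following bookkeeping for the first list. Lemma \ref{lem:13.8} gives $P_{x_0,x_2}(z)=z^dQ_V(z^{-1})$ directly. Lemma \ref{star}, applicable since $(x_0,x_2)$ is $q$-geometric of diameter $d\ge 2$ with base $q^2+q^{-2}\ne\pm 2$ (hence type I), gives $P_{x_0,x_2}=P_{x_2,x_0}$. Applying Lemma \ref{lem:13.7} to $^{\rho}V$ with the identification $(x_1,x_3)\leftrightarrow(x_2,x_0)$ yields $P_{x_2,x_0}(z)=z^dP_{^{\rho}V}(z^{-1})$; applying Lemma \ref{lem:13.8} to $^{\rho^2}V$ with $(x_0,x_2)\leftrightarrow(x_2,x_0)$ yields $P_{x_2,x_0}(z)=z^dQ_{^{\rho^2}V}(z^{-1})$; and applying Lemma \ref{lem:13.7} to $^{\rho^3}V$ with $(x_1,x_3)\leftrightarrow(x_0,x_2)$ yields $P_{x_0,x_2}(z)=z^dP_{^{\rho^3}V}(z^{-1})$. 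Together with $P_{x_0,x_2}=P_{x_2,x_0}$ these show the first six polynomials coincide. The second list follows by the same scheme with the roles interchanged: Lemma \ref{lem:13.7} gives $P_{x_1,x_3}(z)=z^dP_V(z^{-1})$ and Lemma \ref{star} gives $P_{x_1,x_3}=P_{x_3,x_1}$, while applying Lemma \ref{lem:13.8} to $^{\rho}V$, Lemma \ref{lem:13.7} to $^{\rho^2}V$, and Lemma \ref{lem:13.8} to $^{\rho^3}V$, with the identifications $(x_0,x_2)\leftrightarrow(x_1,x_3)$, $(x_1,x_3)\leftrightarrow(x_3,x_1)$, and $(x_0,x_2)\leftrightarrow(x_3,x_1)$ respectively, produces the remaining three equalities.

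There is no analytic difficulty here; the proof is a systematic chaining of three already-established lemmas. The only point that requires genuine care, and the step I would treat as the main obstacle, is tracking the cyclic relabelling of indices modulo $4$ induced by twisting, together with verifying in each application that the twisted module still meets the hypotheses (type $1$, diameter $d\ge 2$) needed to invoke Lemmas \ref{lem:13.7}, \ref{lem:13.8}, and \ref{star}. Once the identification of $(x_i,x_{i+2})$ on $^{\rho^j}V$ with $(x_{i+j},x_{i+j+2})$ on $V$ is stated cleanly, the twelve coincidences follow without further computation.
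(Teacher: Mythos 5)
Your proposal is correct and follows exactly the paper's own argument: the paper's proof is the one-line instruction ``Apply Lemmas \ref{lem:13.8} and \ref{lem:13.7} to $^{\rho^i}V$ for $i\in\mathbb{Z}_4$ and use Lemma \ref{star},'' which is precisely the chaining you carried out, with the index bookkeeping (the pair $(x_i,x_{i+2})$ on $^{\rho^j}V$ being the pair $(x_{i+j},x_{i+j+2})$ on $V$) and the hypothesis checks (type $1$, diameter $d$, type~I base) made explicit. All of your identifications modulo $4$ are accurate, so nothing is missing.
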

\begin{proof}
Apply Lemmas {\ref {lem:13.8}} and {\ref {lem:13.7}} to $^{\rho^i}V$ for $i\in\mathbb{Z}_4$ and use Lemma {\ref {star}}.
\end{proof}

\begin{cor}
\label{cor}
Let $V$ denote a finite-dimensional irreducible $\square_q$-module of type $1$. Then $P_{^{\rho}V}=Q_V$.
\end{cor}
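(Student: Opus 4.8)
The plan is to argue by cases on the diameter $d$ of $V$, since the key input, Proposition \ref{table}, is only available when $d\ge 2$. First I would dispose of the main case $d\ge 2$, which should be essentially immediate. By Proposition \ref{table} the polynomials $z^dP_{^\rho V}(z^{-1})$ and $z^dQ_V(z^{-1})$ coincide (they are the second and third entries in the first displayed list there). Both $P_{^\rho V}$ and $Q_V$ lie in the space of polynomials of degree at most $d$, on which the reversal map $f\mapsto z^df(z^{-1})$ acts as $z^i\mapsto z^{d-i}$; this map is a bijection, hence injective, so cancelling it from the displayed equality yields $P_{^\rho V}=Q_V$.

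It then remains to treat the boundary cases $d\le 1$, which I expect to be the only genuine work, since Proposition \ref{table} does not apply there. For $d=0$ the module $V$ is one-dimensional, so $^\rho V$ is one-dimensional as well, and then $P_{^\rho V}=1=Q_V$ directly from Definitions \ref{def:52} and \ref{def:52b}. For $d=1$, by Lemma \ref{eg4} we may write $V\cong V(1,a)$ for a unique $a\in\mathbb{F}\setminus\{0,1\}$, and Lemma \ref{lem:998} gives $Q_V(z)=1-a^{-1}z$. Since twisting by $\rho$ preserves irreducibility and diameter and sends type $\gamma$ to $\gamma^{-1}$ (as observed after Lemma \ref{lem:gam}, with $1^{-1}=1$), the module $^\rho V$ is irreducible of type $1$ and diameter $1$, so Lemma \ref{eg4} gives $^\rho V\cong V(1,a')$ for a unique $a'\in\mathbb{F}\setminus\{0,1\}$. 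The crux is to identify $a'$, and for this I would invoke the trace identity of Lemma \ref{lem:tra}. Under the identification $x_i=X_{i,i+1}$, the trace of $x_0x_2$ on $^\rho V$ equals the trace of $x_1x_3$ on $V$, which Lemma \ref{lem:tra} evaluates as $2+(q-q^{-1})^2a$; comparing this with the value $2+(q-q^{-1})^2(a')^{-1}$ of $\mathrm{tr}(x_0x_2)$ on $V(1,a')$, and using $q-q^{-1}\ne 0$, forces $a'=a^{-1}$. Hence $P_{^\rho V}(z)=1-a^{-1}z=Q_V(z)$ by Lemma \ref{lem:998}, completing the case.

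The main obstacle is precisely the $d=1$ case: unlike $d\ge 2$, it cannot be read off from the tridiagonal-pair machinery of Proposition \ref{table} and instead demands that one pin down the evaluation parameter of $^\rho V$ explicitly. The trace identity in Lemma \ref{lem:tra} is exactly what renders this identification painless, so the overall proof amounts to one quick reduction for $d\ge 2$ together with two short hands-on computations at $d=0$ and $d=1$.
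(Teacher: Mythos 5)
Your proof is correct and takes essentially the same route as the paper's: split into the cases $d\ge 2$ (immediate from Proposition \ref{table}), $d=0$ (trivial), and $d=1$, where the evaluation parameter of $^{\rho}V$ is pinned down via the trace identities of Lemma \ref{lem:tra}, exactly as the paper does with ${\rm tr}(X_{01}X_{23})$ and ${\rm tr}(X_{12}X_{30})$. The only cosmetic difference is that you spell out the injectivity of the reversal map $f\mapsto z^df(z^{-1})$ on polynomials of degree at most $d$, which the paper leaves implicit.
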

\begin{proof}
Let $d$ denote the diameter of the $\square_q$-module $V$. First assume that $d=0$, then $P_{^{\rho}V}=1=Q_V$ by Definitions {\ref {def:52}}, {\ref {def:52b}} and {\ref {def:104}}. Next assume that $d=1$. By construction, the $\square_q$-module $^{\rho}V$ is irreducible of type $1$ and diameter $1$. Via Theorem {\ref {thm:101}}, the $\square_q$-module $V$ (resp. $^\rho V$) becomes a finite-dimensional irreducible $U_q(L(\mathfrak{sl}_2))$-module of type $1$ and diameter $1$. Let $a$ (resp. $b$) denote the evaluation parameter of the $U_q(L(\mathfrak{sl}_2))$-module $V$ (resp. $^\rho V$). By Lemma {\ref {lem:998}}, we have $P_{^\rho V}(z)=1-bz$ and $Q_V(z)=1-a^{-1}z$. To show that $P_{^{\rho}V}=Q_V$, it suffices to show that $b=a^{-1}$. By construction, the action of $X_{01}$ (resp. $X_{23}$) on $^\rho V$ is equal to the action of $X_{12}$ (resp. $X_{30}$) on $V$. Therefore ${\rm{tr}}(X_{01}X_{23})$ on $^\rho V$ is equal to ${\rm{tr}}(X_{12}X_{30})$ on $V$. By this, (\ref {tr1})--(\ref {tr2}) and $q^2\ne 1$ we have $b=a^{-1}$. Therefore $P_{^{\rho}V}=Q_V$. Next assume that $d\ge 2$. Then $P_{^{\rho}V}=Q_V$ by Proposition {\ref {table}}. We have shown that $P_{^{\rho}V}=Q_V$ for every value of $d$. The result follows.
\end{proof}

\begin{proof}[Proof of Theorem \ref{thm:37}]
Combine Lemma {\ref {lem:54}} and Corollary {\ref {cor}}.
\end{proof}

\begin{proof}[Proof of Corollary \ref{cor:38}]
Apply Theorem {\ref {thm:37}} twice to $V$, and use Proposition {\ref {thm:36}}.
\end{proof}

\section{The $q$-tetrahedron algebra $\boxtimes_q$}
Recall the result Theorem {\ref {thm:37}} about the algebra $\square_q$. In this section we obtain an analogous result for $\boxtimes_q$.
\begin{1def}
(See \cite[\rm Definition~6.1]{qtet}.)
\label{qt}
Let $\boxtimes_q$ denote the $\mathbb{F}$-algebra with generators
\begin{gather*}
\{x_{ij}\mid i,j\in\mathbb{Z}_4, j-i=1\, or \,j-i=2\}
\end{gather*}
and the following relations:
\begin{enumerate}
\item[\rm(i)] For $i,j\in\mathbb{Z}_4$ and $j-i=2$,
\begin{gather*}
x_{ij}x_{ji}=1.
\end{gather*}
\item[\rm(ii)] For $h,i,j\in\mathbb{Z}_4$ such that the pair $(i-h,j-i)$ is one of $(1,1),(1,2),(2,1)$,
\begin{gather*}
\frac{qx_{hi}x_{ij}-q^{-1}x_{ij}x_{hi}}{q-q^{-1}}=1.
\end{gather*}
\item[\rm(iii)] For $h,i,j,k\in\mathbb{Z}_4$ such that $i-h=j-i=k-j=1$,
\begin{gather*}
x_{hi}^3x_{jk}-[3]_qx_{hi}^2x_{jk}x_{hi}+[3]_qx_{hi}x_{jk}x_{hi}^2-x_{jk}x_{hi}^3=0.
\end{gather*}
\end{enumerate}
We call $\boxtimes_q$ the {\it $q$-tetrahedron algebra}.
\end{1def}

We will use the following automorphism of $\boxtimes_q$.
\begin{lem}
\label{lem:776}
\rm
(See \cite[\rm Lemma~6.3]{qtet}.)
\it
There exists an automorphism $\rho$ of $\boxtimes_q$ that sends each generator $x_{ij}$ to $x_{i+1,j+1}$. Moreover $\rho^4=1$.
\end{lem}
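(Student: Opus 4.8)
The plan is to define $\rho$ on the generators by the stated rule $x_{ij}\mapsto x_{i+1,j+1}$ and then verify that this assignment respects the defining relations of Definition {\ref {qt}}, so that it extends to an $\mathbb{F}$-algebra endomorphism of $\boxtimes_q$. The first thing I would check is that the assignment lands in the generating set: since $(j+1)-(i+1)=j-i$ in $\mathbb{Z}_4$, the condition $j-i\in\{1,2\}$ is preserved, so $x_{i+1,j+1}$ is again a generator whenever $x_{ij}$ is.

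The heart of the argument is the observation that every relation in Definition {\ref {qt}} is governed purely by differences of the indices in $\mathbb{Z}_4$, and these differences are invariant under the simultaneous shift $i\mapsto i+1$. Concretely, in relation (i) the hypothesis $j-i=2$ is unchanged; in relation (ii) the pair $(i-h,j-i)$ is unchanged; and in relation (iii) the hypothesis $i-h=j-i=k-j=1$ is unchanged. Hence applying the shift to the indices of any defining relation produces another relation of exactly the same type. It follows that $\rho$ carries relations to relations, so $\rho$ extends (uniquely) to an $\mathbb{F}$-algebra homomorphism $\boxtimes_q\to\boxtimes_q$.

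Finally, since shifting four times returns each index to itself ($4\equiv 0$ in $\mathbb{Z}_4$), the map $\rho^4$ fixes every generator and therefore equals the identity on $\boxtimes_q$. In particular $\rho$ has a two-sided inverse $\rho^3$, so $\rho$ is an automorphism, and $\rho^4=1$. I do not expect any genuine obstacle here: the entire content is the translation invariance of the index conditions, exactly as in the proof of Lemma {\ref {lem:rho}} for $\square_q$; the only point requiring a moment of care is confirming that the difference conditions in each of the three relation families (i)--(iii) really are preserved by the shift.
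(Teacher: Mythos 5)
Your proposal is correct and matches the intended argument: the paper itself gives no proof but defers to \cite[Lemma~6.3]{qtet}, and the content there (as in the paper's own one-line proof of Lemma~\ref{lem:rho} for $\square_q$) is exactly the shift-invariance of the index conditions in the defining relations that you verify. Checking that the difference conditions in relations (i)--(iii) of Definition~\ref{qt} are preserved, so that the generator assignment extends to an endomorphism with $\rho^4=1$ and hence inverse $\rho^3$, is a complete and correct justification.
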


\begin{note}
We use the same symbol $\rho$ for an automorphism of $\square_q$ (in Lemma {\ref {lem:rho}}) and $\boxtimes_q$ (in Lemma {\ref {lem:776}}). As we proceed, it should be clear from the context which algebra is being discussed.
\end{note}

Comparing the relations in Lemma {\ref {lem:lp2}} with the relations in Definition {\ref {qt}}, we obtain an $\mathbb{F}$-algebra homomorphism $\eta: U_q(L(\mathfrak{sl}_2)) \to \boxtimes_q$ that sends $X_{13}\mapsto x_{13}$, $X_{31}\mapsto x_{31}$ and $X_{i,i+1}\mapsto x_{i,i+1}$ for $i\in\mathbb{Z}_4$.

\begin{lem}
\rm
(See \cite[Propositions~4.3]{miki}.)
\label{lem:3}
\it
The above homomorphism $\eta$ is injective.
\end{lem}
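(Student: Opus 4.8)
The plan is to reduce the statement to a \enquote{no-collapse} assertion and then to rule out the collapse by a basis (PBW) argument. First observe that the image of $\eta$ is the subalgebra $B\subseteq\boxtimes_q$ generated by $x_{01},x_{12},x_{23},x_{30},x_{13},x_{31}$; the remaining generators $x_{02},x_{20}$ of $\boxtimes_q$ do not occur. Reading the relations of Definition \ref{qt} off for these six generators only, and comparing them with the equitable relations of Lemma \ref{lem:lp2}, one checks that each equitable relation appears among the relations of $\boxtimes_q$ and that no relation of Definition \ref{qt} supported on these six generators takes any other form. Hence $\eta$ is a well-defined surjection onto $B$, and the content of the lemma is precisely that $\eta$ does not collapse, i.e.\ that the relations of $\boxtimes_q$ involving $x_{02}$ or $x_{20}$ impose no hidden relation among $x_{01},x_{12},x_{23},x_{30},x_{13},x_{31}$.

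To rule this out I would use a PBW-type basis. Fix a triangular decomposition of $U_q(L(\mathfrak{sl}_2))$ coming from Definition \ref{def:lp1}, together with the associated ordered PBW basis. A nonzero element $u\in U_q(L(\mathfrak{sl}_2))$ is a nonzero $\mathbb{F}$-linear combination of finitely many such PBW monomials; it suffices to show that their $\eta$-images are $\mathbb{F}$-linearly independent in $\boxtimes_q$, for then $\eta(u)\neq0$ and $\ker\eta=0$. The work is therefore to assemble an $\mathbb{F}$-basis of $\boxtimes_q$, built from its defining relations, inside which these images sit as part of a basis; this is exactly the route taken in \cite[Proposition~4.3]{miki}, and it is the mechanism that certifies $x_{02},x_{20}$ do not shrink $B$.

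The main obstacle is precisely this linear-independence, since it is equivalent to the lemma itself; everything before it is bookkeeping. A representation-theoretic alternative, closer in spirit to the rest of the paper, is to test $\ker\eta$ against modules: one pulls back finite-dimensional $\boxtimes_q$-modules (for instance the evaluation modules of \cite[Section~9]{qtet6} and the modules classified in \cite[Section~10]{qtet}) along $\eta$ and argues that the resulting family of $U_q(L(\mathfrak{sl}_2))$-modules separates points, whence $\ker\eta=0$. The difficulty there is that, by the correspondences recalled in the introduction, these pullbacks realize only the type-$1$ modules with $P_V(1)\neq0$, so one must still argue that this restricted (but generic) family is faithful on $U_q(L(\mathfrak{sl}_2))$. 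Either way the crux is to certify that no degeneration occurs, and once that is done the lemma follows at once.
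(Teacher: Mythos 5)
The paper offers no independent proof of this lemma: the statement is simply quoted from Miki (\cite[Propositions~4.1, 4.3]{miki}), with well-definedness of $\eta$ handled in the preceding paragraph by comparing the relations of Lemma \ref{lem:lp2} with those of Definition \ref{qt}. Your proposal does exactly the same thing — after the bookkeeping on well-definedness you defer the crux (the PBW-type linear independence in $\boxtimes_q$) to that same citation — so it matches the paper's treatment; just be aware that, as you yourself note, neither your PBW sketch nor the representation-theoretic alternative (whose faithfulness step you leave open) constitutes a self-contained proof.
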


We next recall some facts about finite-dimensional irreducible $\boxtimes_q$-modules.

\begin{lem}
\label{lem:74}
\rm
(See \cite[\rm Theorem~12.3]{qtet}.)
\it
Let $V$ denote a finite-dimensional irreducible $\boxtimes_q$-module. Then each generator $x_{ij}$ of $\boxtimes_q$ is semisimple on $V$. Moreover there exist $d\in\mathbb{N}$ and $\gamma\in\{1,-1\}$ such that for each generator $x_{ij}$ the set of distinct eigenvalues of $x_{ij}$ on $V$ is $\{\gamma q^{d-2i}\}_{i=0}^d$.
\end{lem}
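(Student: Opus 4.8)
The plan is to mimic the proof of Lemma~\ref{lem:88}, exploiting that the operator lemmas \ref{lem:82}, \ref{lem:83}, \ref{lem:84}, \ref{lem:85} (for the $q$-commutation relation) and Lemma~\ref{lem:81} (for the cubic relation) are stated for arbitrary $C,D\in\mathrm{End}(V)$ obeying the relevant relation, and hence apply verbatim to the generators of $\boxtimes_q$. First I would record the ``arrow'' structure coming from relation (ii) of Definition~\ref{qt}: writing $C\to D$ for $\frac{qCD-q^{-1}DC}{q-q^{-1}}=I$, the admissible triples produce, for $h\in\mathbb{Z}_4$, the arrows $x_{h,h+1}\to x_{h+1,h+2}$, $x_{h,h+1}\to x_{h+1,h+3}$, and $x_{h,h+2}\to x_{h+2,h+3}$. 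I also note that relation (i) makes the diagonal generators invertible in mutually inverse pairs $x_{02}=x_{20}^{-1}$ and $x_{13}=x_{31}^{-1}$, while relation (iii) couples only the consecutive edge generators $x_{h,h+1},x_{h+2,h+3}$.

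Next I would seed the argument at $x_{01}$. After establishing that $x_{01}$ acts invertibly on $V$ (by the same device as in \cite[Proposition~5.2]{y1}), I pick a top eigenvalue $\tau\ne 0$ with $V_{x_{01}}(q^2\tau)=0$ and let $d$ be maximal with $V_{x_{01}}(q^{-2d}\tau)\ne 0$, forming the candidate submodule $S=\sum_{n=0}^d V_{x_{01}}(q^{-2n}\tau)$. The crux is to show $S$ is $\boxtimes_q$-invariant. The generators joined to $x_{01}$ by an arrow, namely $x_{12},x_{13}$ (outgoing) and $x_{30},x_{20}$ (incoming), preserve $S$ by Lemmas~\ref{lem:82} and \ref{lem:83} together with the vanishing of the eigenspaces just beyond the two ends of the string; the generator $x_{23}$ preserves $S$ by Lemma~\ref{lem:81}, using that both $V_{x_{01}}(q^2\tau)$ and $V_{x_{01}}(q^{-2d-2}\tau)$ vanish. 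The two remaining generators $x_{02},x_{31}$ are not directly related to $x_{01}$, and this is the one genuinely new point relative to Lemma~\ref{lem:88}: I would handle them through the inverse relations. On $S$ each of $x_{20}$ and $x_{13}$ is triangular with respect to the $x_{01}$-grading with diagonal blocks the nonzero scalars $(q^{-2n}\tau)^{-1}$, hence invertible as a map $S\to S$; therefore $x_{02}=x_{20}^{-1}$ and $x_{31}=x_{13}^{-1}$ map $S$ onto $S$ as well. Once $S$ is seen to be a nonzero submodule, irreducibility forces $S=V$, so $x_{01}$ is semisimple with eigenvalue set $\{\gamma q^{d-2i}\}_{i=0}^d$ where $\gamma=\tau q^{-d}$.

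To propagate semisimplicity I would use that, relative to the $x_{01}$-eigenspace decomposition of $V$, every generator joined to $x_{01}$ by an arrow acts block-triangularly with diagonal blocks the distinct scalars $(\gamma q^{d-2n})^{-1}$; a block-triangular operator with distinct scalar diagonal blocks is semisimple, so $x_{12},x_{13},x_{30},x_{20}$ are semisimple, whence so are their inverses $x_{02},x_{31}$, and $x_{23}$ is semisimple by the same reasoning applied to the arrow $x_{12}\to x_{23}$. By Lemma~\ref{lem:85} each arrow inverts eigenvalue sets, so chasing the arrows determines the eigenvalue set of every generator as an inverted string built from $\gamma$.

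The main obstacle, absent in the $\square_q$ setting where the type $\gamma$ is unconstrained, is to pin down $\gamma\in\{1,-1\}$. Here I would compare the two ways of reaching $x_{30}$ from $x_{01}$: the edge path $x_{01}\to x_{12}\to x_{23}\to x_{30}$ of length three yields, by three applications of Lemma~\ref{lem:85}, the eigenvalue set $\{\gamma^{-1}q^{d-2i}\}_{i=0}^d$ for $x_{30}$, whereas the diagonal path $x_{01}\to x_{13}\to x_{30}$ of length two yields $\{\gamma q^{d-2i}\}_{i=0}^d$. Equality of these two sets means that multiplication by $\gamma^2$ fixes the geometric progression $\{q^{d-2i}\}_{i=0}^d$; since $q$ is not a root of unity this finite progression is fixed by no nontrivial dilation, forcing $\gamma^2=1$. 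Finally, because $\gamma^{-1}=\gamma$ once $\gamma\in\{1,-1\}$, all the inverted strings collapse to $\{\gamma q^{d-2i}\}_{i=0}^d$, giving the uniform eigenvalue set claimed for every generator.
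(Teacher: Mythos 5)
Your proof is correct, but it takes a genuinely different route from the paper, because the paper gives no argument at all here: the lemma is simply quoted from \cite[Theorem~12.3]{qtet}, where it is established as part of that paper's development of the classification of finite-dimensional irreducible $\boxtimes_q$-modules (via the correspondence with NonNil irreducible $U_q^+$-modules of type $(1,1)$). Your argument instead transplants the paper's proof of Lemma~\ref{lem:88} to $\boxtimes_q$, and the two places where you must go beyond that template are exactly the two places where $\boxtimes_q$ differs from $\square_q$, and both are handled correctly. The diagonal generators $x_{02},x_{31}$, which satisfy no defining relation with $x_{01}$, are caught by your inverse trick: by Lemmas~\ref{lem:82}, \ref{lem:83} the operators $x_{13}$ and $x_{20}$ act on $S$ block-triangularly with nonzero scalar diagonal entries $(q^{-2n}\tau)^{-1}$, hence restrict to bijections of $S$, so $x_{31}=x_{13}^{-1}$ and $x_{02}=x_{20}^{-1}$ preserve $S$ as well; and your later appeal to the fact that a block-triangular operator with distinct scalar diagonal blocks $\lambda_n I$ is semisimple is valid, since such an operator is annihilated by $\prod_n(z-\lambda_n)$. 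The restriction $\gamma\in\{1,-1\}$, which has no counterpart in Lemma~\ref{lem:88}, comes out of your path-parity argument: all five arrows you use ($x_{01}\to x_{12}\to x_{23}\to x_{30}$ and $x_{01}\to x_{13}\to x_{30}$) are indeed admissible triples under Definition~\ref{qt}(ii), each application of Lemma~\ref{lem:85} inverts the eigenvalue string (semisimplicity propagates because the eigenspace dimensions of the target already sum to $\dim V$), and equality of the two resulting strings for $x_{30}$ forces $\{\gamma q^{d-2i}\}_{i=0}^d=\{\gamma^{-1}q^{d-2i}\}_{i=0}^d$, hence $\gamma^2=1$ since $q$ is not a root of unity. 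Two minor streamlinings: for the seed you need not rerun the device of \cite[Proposition~5.2]{y1} inside $\boxtimes_q$, since $\eta\circ\psi:\square_q\to\boxtimes_q$ sends $x_i\mapsto x_{i,i+1}$, so every finite-dimensional $\boxtimes_q$-module is by pullback a finite-dimensional $\square_q$-module and that proposition applies as is; and it is cleaner to define $d$ by the first gap in the string (as in the proof of Lemma~\ref{lem:88}) rather than as the maximal index with nonzero eigenspace, since otherwise you owe one extra line ruling out internal gaps (a gap at position $m$ would make $\sum_{n=0}^{m-1}V_{x_{01}}(q^{-2n}\tau)$ a proper nonzero submodule, a contradiction). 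What the two approaches buy: the paper's citation leans on the full machinery of \cite{qtet}, whereas your proof is self-contained modulo the operator Lemmas~\ref{lem:81}--\ref{lem:85}, and it explains structurally why the type is pinned to $\pm 1$ for $\boxtimes_q$ while it is a free nonzero scalar for $\square_q$: the diagonal generators create a circuit with an odd number of eigenvalue-inverting arrows, and $\square_q$ has no such circuit.
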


\begin{1def}
(See \cite[\rm Definition~12.4]{qtet}.)
Let $V$ denote a finite-dimensional irreducible $\boxtimes_q$-module. By the {\it diameter} of $V$ we mean the scalar $d$ from Lemma {\ref {lem:74}}. By the {\it type} of $V$ we mean the scalar $\gamma$ from Lemma {\ref {lem:74}}.
\end{1def}

Recall the map $\kappa$ from above Lemma {\ref {lem:2}} and the map $\psi$ from above Lemma {\ref {lem:1}}. Consider the composition
\begin{equation*}
\eta\circ\psi\circ\kappa:
\begin{tikzcd}
 U_q^+\arrow{r}{\kappa} & \square_q \arrow{r}{\psi}& U_q(L(\mathfrak{sl}_2))\arrow{r}{\eta}& \boxtimes_q.
\end{tikzcd}
\end{equation*}

\begin{thm}
\rm
(See \cite[\rm Theorem~10.3, 10.4]{qtet}.)
\label{thm:78}
\it
The map $ (\eta\circ\psi\circ\kappa)^\sharp$ gives a bijection between the following two sets:
\begin{enumerate}
\item[\rm(i)] the isomorphism classes of finite-dimensional irreducible $\boxtimes_q$-modules of type $1$;
\item[\rm(ii)] the isomorphism classes of NonNil finite-dimensional irreducible $U_q^+$-modules of type $(1,1)$.
\end{enumerate}
\end{thm}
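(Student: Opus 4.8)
The plan is to exploit the functoriality of pullback. Since pullback reverses composition, we have
\[
(\eta\circ\psi\circ\kappa)^\sharp=(\psi\circ\kappa)^\sharp\circ\eta^\sharp .
\]
By Theorem \ref{thm:68} the map $(\psi\circ\kappa)^\sharp$ is a bijection between the isomorphism classes of finite-dimensional irreducible $U_q(L(\mathfrak{sl}_2))$-modules $V$ of type $1$ with $P_V(1)\ne 0$ and the set (ii). Consequently it suffices to prove that $\eta^\sharp$ restricts to a bijection between the set (i) and the isomorphism classes of finite-dimensional irreducible $U_q(L(\mathfrak{sl}_2))$-modules $V$ of type $1$ with $P_V(1)\ne 0$. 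In other words, I would reduce the whole statement to understanding how a finite-dimensional irreducible $\boxtimes_q$-module of type $1$ restricts to $U_q(L(\mathfrak{sl}_2))$ along the embedding $\eta$, and conversely which $U_q(L(\mathfrak{sl}_2))$-modules extend to $\boxtimes_q$.

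For the forward direction, let $V$ be a finite-dimensional irreducible $\boxtimes_q$-module of type $1$ and diameter $d$, and pull back via $\eta$. The elements $\eta(X_{13})=x_{13}$ and $\eta(X_{31})=x_{31}$ act on $V$ with eigenvalues $\{q^{d-2i}\}_{i=0}^d$ by Lemma \ref{lem:74}; comparing with Lemma \ref{lem:dec} and using that $q$ is not a root of unity forces the type to equal $1$ rather than $-1$. The essential point is irreducibility. Here I would establish a $\boxtimes_q$-analogue of Proposition \ref{pro:10.1}, namely that a nonzero subspace of $V$ invariant under $x_{01}$ and $x_{23}$ (the images of the standard generators $x,y$ of $U_q^+$ under $\eta\circ\psi\circ\kappa$) is all of $V$. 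Granting this, the further pullback to $U_q^+$ is irreducible, hence NonNil of type $(1,1)$; it follows that the $U_q(L(\mathfrak{sl}_2))$-module is irreducible as well, since a $U_q(L(\mathfrak{sl}_2))$-submodule is in particular a $U_q^+$-submodule, and then $P_V(1)\ne 0$ by Lemma \ref{irre}. The $\boxtimes_q$-analogue of Proposition \ref{pro:10.1} should follow from the flag structure on finite-dimensional irreducible $\boxtimes_q$-modules of type $1$ (the $\boxtimes_q$ counterparts of Theorems \ref{thm:91}--\ref{thm:93}, cf. \cite[\rm Theorem~16.3]{qtet}) by the same opposite-flag argument used to prove Proposition \ref{pro:10.1}.

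For the reverse direction I would, given a finite-dimensional irreducible $U_q(L(\mathfrak{sl}_2))$-module $V$ of type $1$ with $P_V(1)\ne 0$, construct a $\boxtimes_q$-module structure on $V$ extending the $U_q(L(\mathfrak{sl}_2))$-action and show it is unique and irreducible of type $1$, so that the two assignments are mutually inverse on isomorphism classes. The uniqueness is the easier half: the action is pinned down by the flags $[h]$, $h\in\mathbb{Z}_4$, exactly as in the proof of Lemma \ref{thm:10.3}. The existence of the extension, equivalently the production of a consistent action of the ``diagonal'' generators $x_{02},x_{20}$ that are not in the image of $\eta$, is the main obstacle, since these generators carry information not visible from the $U_q(L(\mathfrak{sl}_2))$-structure alone; I expect this step to require the module structure theory of $\boxtimes_q$ from \cite{qtet} rather than anything internal to $\square_q$. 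Since the composite $\eta\circ\psi\circ\kappa$ sends the standard generators $x,y$ of $U_q^+$ to $x_{01},x_{23}$, which is precisely the embedding $U_q^+\to\boxtimes_q$ used in \cite[\rm Theorems~10.3, 10.4]{qtet}, the entire statement can alternatively be read off directly from those theorems once the maps are identified; the factorization above merely organizes the argument around the results already proved in this paper.
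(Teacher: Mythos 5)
Your closing paragraph is, in fact, the paper's entire proof: the paper offers no internal argument for Theorem \ref{thm:78}, but simply quotes \cite[Theorems~10.3, 10.4]{qtet}, the point being exactly the one you make at the end --- the composite $\eta\circ\psi\circ\kappa$ sends $x\mapsto x_{01}$ and $y\mapsto x_{23}$, which is precisely the embedding of $U_q^+$ into $\boxtimes_q$ used in those theorems, so the statement is a restatement of them once the maps are identified. The bulk of your proposal describes a different organization: factor $(\eta\circ\psi\circ\kappa)^\sharp=(\psi\circ\kappa)^\sharp\circ\eta^\sharp$, invoke Theorem \ref{thm:68}, and prove directly that $\eta^\sharp$ is a bijection onto the type-$1$ modules with $P_V(1)\ne 0$. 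Two remarks on that route. First, it inverts the paper's logical order: in the paper, the bijectivity of $\eta^\sharp$ (Theorem \ref{thm:79}) and of $(\eta\circ\psi)^\sharp$ (Theorem \ref{thm:eta}) are \emph{deduced from} Theorem \ref{thm:78} together with Theorem \ref{thm:68}, so one must not appeal to Theorem \ref{thm:79} in this argument on pain of circularity; to your credit, you do not, and instead propose independent proofs. Second, however, the two steps you leave open --- the $\boxtimes_q$-analogue of Proposition \ref{pro:10.1}, which needs the opposite-flag machinery for $\boxtimes_q$-modules from \cite[Section~16]{qtet}, and the existence of a $\boxtimes_q$-module structure extending a given $U_q(L(\mathfrak{sl}_2))$-action (the generators $x_{02},x_{20}$ lie outside the image of $\eta$) --- are exactly the content of \cite[Theorems~10.3, 10.4]{qtet}. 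So the detour does not reduce the external input; carried out honestly, it amounts to reproving the cited theorems rather than citing them. In short, your proposal is correct, and its fallback coincides with the paper's one-line justification; the surrounding scaffolding is harmless but buys nothing beyond what the identification of the maps already gives.
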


\begin{thm}
\label{thm:79}
The map $\eta^\sharp$ gives a bijection between the following two sets:
\begin{enumerate}
\item[\rm(i)] the isomorphism classes of finite-dimensional irreducible $\boxtimes_q$-modules of type $1$;
\item[\rm(ii)] the isomorphism classes of finite-dimensional irreducible $U_q(L(\mathfrak{sl}_2))$-modules $V$ of type $1$ such that $P_V(1)\ne 0$.
\end{enumerate}
\end{thm}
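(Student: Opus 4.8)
The plan is to deduce this bijection from the two bijections already in hand, namely Theorem~\ref{thm:78} for $(\eta\circ\psi\circ\kappa)^\sharp$ and Theorem~\ref{thm:68} for $(\psi\circ\kappa)^\sharp$, by exploiting the contravariant functoriality of pullback. Exactly as in~(\ref{k}), the pullback of the composite $\eta\circ\psi\circ\kappa$ factors as
\begin{equation*}
(\eta\circ\psi\circ\kappa)^\sharp=(\psi\circ\kappa)^\sharp\circ\eta^\sharp,
\end{equation*}
where $\eta^\sharp$ is applied first. Write $S_1$ for set~(i), $S_2$ for set~(ii), and $T$ for the set of isomorphism classes of NonNil finite-dimensional irreducible $U_q^+$-modules of type $(1,1)$. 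Then Theorem~\ref{thm:78} says $(\eta\circ\psi\circ\kappa)^\sharp\colon S_1\to T$ is a bijection and Theorem~\ref{thm:68} says $(\psi\circ\kappa)^\sharp\colon S_2\to T$ is a bijection.

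First I would check that $\eta^\sharp$ carries $S_1$ into $S_2$. Let $V$ denote a finite-dimensional irreducible $\boxtimes_q$-module of type~$1$ and diameter $d$, and set $M=\eta^\sharp V$, a finite-dimensional $U_q(L(\mathfrak{sl}_2))$-module. By the displayed factorization, the pullback $(\psi\circ\kappa)^\sharp M$ equals $(\eta\circ\psi\circ\kappa)^\sharp V$, which by Theorem~\ref{thm:78} is a NonNil irreducible $U_q^+$-module of type $(1,1)$. Since the $U_q^+$-action on this module factors through $U_q(L(\mathfrak{sl}_2))$, every $U_q(L(\mathfrak{sl}_2))$-submodule of $M$ is in particular a $U_q^+$-submodule; irreducibility of $(\psi\circ\kappa)^\sharp M$ therefore forces $M$ to be irreducible. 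The type of $M$ is read off from $K_1=X_{13}$ and $K_0=X_{31}$: under $\eta$ these act on $M$ as $x_{13}$ and $x_{31}$, whose eigenvalues on the type-$1$ module $V$ are $\{q^{d-2i}\}_{i=0}^d$ by Lemma~\ref{lem:74}, so by Lemma~\ref{lem:dec} and Definition~\ref{defd} the module $M$ has type~$1$. Finally, $M$ is a type-$1$ irreducible $U_q(L(\mathfrak{sl}_2))$-module whose pullback $(\psi\circ\kappa)^\sharp M$ is $U_q^+$-irreducible, so Lemma~\ref{irre} gives $P_M(1)\ne0$. Hence $M\in S_2$.

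With $\eta^\sharp(S_1)\subseteq S_2$ established, the factorization reads $(\eta\circ\psi\circ\kappa)^\sharp=(\psi\circ\kappa)^\sharp\circ\eta^\sharp$ as maps out of $S_1$, where $(\eta\circ\psi\circ\kappa)^\sharp\colon S_1\to T$ and $(\psi\circ\kappa)^\sharp\colon S_2\to T$ are bijections. Solving for $\eta^\sharp$ gives
\begin{equation*}
\eta^\sharp|_{S_1}=\bigl((\psi\circ\kappa)^\sharp\bigr)^{-1}\circ(\eta\circ\psi\circ\kappa)^\sharp,
\end{equation*}
a composite of bijections, hence a bijection from $S_1$ onto $S_2$, which is the assertion. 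I expect the only real obstacle to lie in the middle paragraph, namely confirming that $\eta^\sharp$ preserves irreducibility and type~$1$ and produces the condition $P_V(1)\ne0$; once that module-theoretic bookkeeping is in place, the bijection follows formally from the functoriality of pullback together with the two known bijections.
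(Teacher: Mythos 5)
Your proposal is correct and takes essentially the same route as the paper: the paper's proof consists precisely of the factorization $(\eta\circ\psi\circ\kappa)^\sharp=(\psi\circ\kappa)^\sharp\circ\eta^\sharp$ combined with Theorems \ref{thm:68} and \ref{thm:78}. Your middle paragraph, verifying that $\eta^\sharp$ carries set (i) into set (ii) (irreducibility via the factoring of the $U_q^+$-action, type $1$ via the eigenvalues of $x_{13},x_{31}$, and $P_M(1)\ne 0$ via Lemma \ref{irre}), spells out bookkeeping that the paper leaves implicit but which is genuinely needed to make the formal inversion argument airtight.
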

\begin{proof}
Observe that $(\eta\circ\psi\circ\kappa)^\sharp=(\psi\circ\kappa)^\sharp\circ\eta^\sharp$. The result follows from this along with Theorems {\ref {thm:68}}, {\ref {thm:78}}.
\end{proof}

\begin{thm}
\label{thm:eta}
The map $(\eta\circ\psi)^\sharp$ gives a bijection between the following two sets:
\begin{enumerate}
\item[\rm(i)] the isomorphism classes of finite-dimensional irreducible $\boxtimes_q$-modules of type $1$;
\item[\rm(ii)] the isomorphism classes of finite-dimensional irreducible $\square_q$-modules of type $1$.
\end{enumerate}
\end{thm}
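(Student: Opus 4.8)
The plan is to exploit the contravariant functoriality of the pullback operation and reduce the claim to the two bijections already established for the individual arrows in the chain (\ref{c2}). Since $\psi:\square_q\to U_q(L(\mathfrak{sl}_2))$ and $\eta:U_q(L(\mathfrak{sl}_2))\to \boxtimes_q$, the composite is $\eta\circ\psi:\square_q\to \boxtimes_q$, and pullback reverses the order of composition exactly as in (\ref{k}). So the first step is to record the identity
\begin{equation*}
(\eta\circ\psi)^\sharp=\psi^\sharp\circ\eta^\sharp,
\end{equation*}
which holds by Definition {\ref {def:not}} together with the observation that pulling a $\boxtimes_q$-module back via $\eta$ and then via $\psi$ makes each $a\in\square_q$ act as $\eta(\psi(a))$, i.e. as $(\eta\circ\psi)(a)$. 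This mirrors the factorization used in the proof of Theorem {\ref {thm:79}}.

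Next I would line up the domains and codomains of the two constituent bijections. By Theorem {\ref {thm:79}}, the map $\eta^\sharp$ is a bijection from the isomorphism classes of finite-dimensional irreducible $\boxtimes_q$-modules of type $1$ onto the isomorphism classes of finite-dimensional irreducible $U_q(L(\mathfrak{sl}_2))$-modules $V$ of type $1$ with $P_V(1)\ne 0$. By Theorem {\ref {thm:101}}, the map $\psi^\sharp$ is a bijection from that same intermediate set of isomorphism classes onto the isomorphism classes of finite-dimensional irreducible $\square_q$-modules of type $1$. The crucial point is that the codomain description in Theorem {\ref {thm:79}}(ii) and the domain description in Theorem {\ref {thm:101}}(i) are verbatim identical, so the two bijections compose with no mismatch.

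Finally, since the composite of two bijections is a bijection, the displayed identity shows that $(\eta\circ\psi)^\sharp=\psi^\sharp\circ\eta^\sharp$ is a bijection from the isomorphism classes of finite-dimensional irreducible $\boxtimes_q$-modules of type $1$ onto the isomorphism classes of finite-dimensional irreducible $\square_q$-modules of type $1$, which is the assertion. I do not anticipate a genuine obstacle here: the entire difficulty has been front-loaded into establishing Theorems {\ref {thm:79}} and {\ref {thm:101}}. The only point demanding care is the bookkeeping of the composition direction for $\sharp$ and the verification that the intermediate class of $U_q(L(\mathfrak{sl}_2))$-modules (type $1$, $P_V(1)\ne 0$) is exactly the same set in both cited theorems; once these are checked, the result is immediate.
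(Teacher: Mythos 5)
Your proposal is correct and coincides with the paper's own proof: the paper likewise records the identity $(\eta\circ\psi)^\sharp=\psi^\sharp\circ\eta^\sharp$ and then composes the bijections of Theorems \ref{thm:79} and \ref{thm:101}. Your additional care in checking that the intermediate set of $U_q(L(\mathfrak{sl}_2))$-module classes matches verbatim is exactly the (implicit) content of the paper's one-line argument.
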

\begin{proof}
Observe that $(\eta\circ\psi)^\sharp=\psi^\sharp\circ \eta^\sharp$. The result follows from this along with Theorems {\ref {thm:101}}, {\ref {thm:79}}.
\end{proof}

Let $V$ denote a finite-dimensional irreducible $\boxtimes_q$-module of type $1$. Via Theorem {\ref{thm:79}}, the vector space $V$ becomes a finite-dimensional irreducible $U_q(L(\mathfrak{sl}_2))$-module of type $1$ such that $P_V(1)\ne 0$. Via Theorem {\ref {thm:78}}, the vector space $V$ becomes a NonNil finite-dimensional irreducible $U_q^+$-module of type $(1,1)$. Via Theorem {\ref {thm:eta}}, the vector space $V$ becomes a finite-dimensional irreducible $\square_q$-module of type $1$.
\begin{1def}
\label{def:7104}
Let $V$ denote a finite-dimensional irreducible $\boxtimes_q$-module of type $1$. Let $P_V$ denote the polynomial from Definition {\ref {def:52}} associated with the $U_q(L(\mathfrak{sl}_2))$-module $V$ from Theorem {\ref{thm:79}}. Observe that $P_V$ is equal to the polynomial from Definition {\ref {driuq}} associated with the $U_q^+$-module $V$ from Theorem {\ref {thm:78}} and the polynomial from Definition {\ref{def:104}} associated with the $\square_q$-module $V$ from Theorem {\ref {thm:eta}}.
\end{1def}

\begin{pro}
\label{thm:736}
The map $V\mapsto P_V$ induces a bijection between the following two sets:
\begin{enumerate}
\item[\rm(i)] the isomorphism classes of finite-dimensional irreducible $\boxtimes_q$-modules of type $1$;
\item[\rm(ii)] the polynomials in $\mathbb{F}[z]$ that have constant coefficient $1$ and do not vanish at $z=1$.
\end{enumerate}
\end{pro}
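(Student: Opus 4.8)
The plan is to recognize the map $V\mapsto P_V$ on $\boxtimes_q$-modules as a composite of two bijections already in hand, exactly paralleling the one-line proof of Proposition~\ref{thm:36}. By Definition~\ref{def:7104}, for a finite-dimensional irreducible $\boxtimes_q$-module $V$ of type~$1$ the polynomial $P_V$ is \emph{defined} to be the Drinfel'd polynomial (Definition~\ref{def:52}) of the $U_q(L(\mathfrak{sl}_2))$-module attached to $V$ via the bijection $\eta^\sharp$ of Theorem~\ref{thm:79}. Consequently the assignment $V\mapsto P_V$ on $\boxtimes_q$-modules factors as $\eta^\sharp$ followed by the Drinfel'd-polynomial map $W\mapsto P_W$ on $U_q(L(\mathfrak{sl}_2))$-modules.

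I would then assemble the two bijections. First, Theorem~\ref{thm:79} provides a bijection from the isomorphism classes in~(i) onto the isomorphism classes of finite-dimensional irreducible $U_q(L(\mathfrak{sl}_2))$-modules $W$ of type~$1$ with $P_W(1)\ne 0$. Second, Theorem~\ref{thm:57} provides a bijection from the isomorphism classes of \emph{all} finite-dimensional irreducible $U_q(L(\mathfrak{sl}_2))$-modules of type~$1$ onto the polynomials in $\mathbb{F}[z]$ with constant coefficient~$1$. Restricting this second bijection to the subclass with $P_W(1)\ne 0$ yields a bijection onto precisely the polynomials with constant coefficient~$1$ that do not vanish at $z=1$. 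Composing the two bijections gives the desired bijection between~(i) and~(ii).

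Since the heavy lifting has been carried out in Theorems~\ref{thm:79} and~\ref{thm:57}, I do not anticipate a genuine obstacle here; the argument is essentially bookkeeping. The one point that must be checked is that the two restrictions line up: the condition $P_W(1)\ne 0$ cutting out the image of $\eta^\sharp$ inside the $U_q(L(\mathfrak{sl}_2))$-modules is matched, under the Drinfel'd-polynomial bijection of Theorem~\ref{thm:57}, with the condition that the corresponding polynomial not vanish at $z=1$. This compatibility is immediate from the definition of $P_V$ (the same polynomial is read off on both sides), so the proof reduces to citing Theorems~\ref{thm:57} and~\ref{thm:79}.
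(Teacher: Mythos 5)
Your proposal is correct and matches the paper's own proof, which is exactly the one-line citation of Theorems~\ref{thm:57} and~\ref{thm:79}: you compose the bijection $\eta^\sharp$ with the Drinfel'd-polynomial bijection, using Definition~\ref{def:7104} to see that this composite is the map $V\mapsto P_V$, and note that the restriction $P_W(1)\ne 0$ corresponds precisely to the polynomials not vanishing at $z=1$. Your write-up simply makes explicit the bookkeeping that the paper leaves implicit.
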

\begin{proof}
By Theorems {\ref {thm:57}} and {\ref {thm:79}}.
\end{proof}

Combining Theorems {\ref {rmk:10.4}}, {\ref {thm:101}}, {\ref {thm:12.2}}, {\ref {thm:79}}, {\ref {thm:736}}, we obtain a bijection between any two of the following sets:
\begin{enumerate}
\item[\rm(i)] the isomorphism classes of $q$-geometric tridiagonal pairs;
\item[\rm(ii)] the isomorphism classes of NonNil finite-dimensional irreducible $U_q^+$-modules of type $(1,1)$;
\item[\rm(iii)] the isomorphism classes of finite-dimensional irreducible $U_q(L(\mathfrak{sl}_2))$-modules $V$ of type $1$ such that $P_V(1)\ne 0$;
\item[\rm(iv)] the isomorphism classes of finite-dimensional irreducible $\boxtimes_q$-modules of type $1$;
\item[\rm(v)] the polynomials in $\mathbb{F}[z]$ that have constant coefficient $1$ and do not vanish at $z=1$;
\item[\rm(vi)] the isomorphism classes of finite-dimensional irreducible $\square_q$-modules of type $1$.
\end{enumerate}
Let $V$ denote a finite-dimensional irreducible $U_q(L(\mathfrak{sl}_2))$-module of type $1$ and $P_V(1)\ne 0$. Via the above bijections, we view $V$ as an $\mathcal{A}$-module, where $\mathcal{A}$ is any of $U_q^+$, $\square_q$, $U_q(L(\mathfrak{sl}_2))$, $\boxtimes_q$. We call $P_V$ the {\it Drinfel'd polynomial} for the $\mathcal{A}$-module $V$.

The map $\eta\circ\psi$ is an injective $\mathbb{F}$-algebra homomorphism from $\square_q$ to $\boxtimes_q$. We have the automorphism $\rho$ for $\square_q$ and $\boxtimes_q$. We now explain how these maps are related.
\begin{lem}
\label{lem:com}
The following diagram commutes:
\[
\xymatrix{
\square_q \ar[d]_{\rho} \ar[r]^{\eta\circ\psi} & \boxtimes_q\ar[d]^\rho\\
\square_q \ar[r]_{\eta\circ\psi} & \boxtimes_q}
\]
In the above diagram the map $\psi$ is from above Lemma {\ref {lem:1}}, the map $\eta$ is from above Lemma {\ref {lem:3}}, the map $\rho$ on the left is from Lemma {\ref {lem:rho}} and the map $\rho$ on the right is from Lemma {\ref {lem:776}}.
\end{lem}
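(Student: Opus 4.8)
The plan is to exploit the fact that all four maps in the diagram are $\mathbb{F}$-algebra homomorphisms, and that $\square_q$ is generated by $\{x_i\}_{i\in\mathbb{Z}_4}$. Consequently, to prove $\rho\circ(\eta\circ\psi)=(\eta\circ\psi)\circ\rho$ as maps $\square_q\to\boxtimes_q$, it suffices to verify that the two composites agree on each generator $x_i$ of $\square_q$. Once the images of all generators match, the two composite homomorphisms coincide on all of $\square_q$, since any $\mathbb{F}$-algebra homomorphism is determined by its action on a generating set.

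First I would record the action of the relevant maps on generators. By the definition of $\psi$ (from above Lemma~{\ref{lem:1}}) we have $\psi(x_i)=X_{i,i+1}$, and by the definition of $\eta$ (from above Lemma~{\ref{lem:3}}) we have $\eta(X_{i,i+1})=x_{i,i+1}$. Hence the composite sends $(\eta\circ\psi)(x_i)=x_{i,i+1}$ for each $i\in\mathbb{Z}_4$. Recall also that $\rho$ on $\square_q$ (Lemma~{\ref{lem:rho}}) sends $x_i\mapsto x_{i+1}$, while $\rho$ on $\boxtimes_q$ (Lemma~{\ref{lem:776}}) sends $x_{ij}\mapsto x_{i+1,j+1}$.

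Next I would trace the two paths around the square, starting from an arbitrary generator $x_i$. Going right and then down yields
\[
x_i\;\xmapsto{\ \eta\circ\psi\ }\;x_{i,i+1}\;\xmapsto{\ \rho\ }\;x_{i+1,i+2}.
\]
Going down and then right yields
\[
x_i\;\xmapsto{\ \rho\ }\;x_{i+1}\;\xmapsto{\ \eta\circ\psi\ }\;x_{i+1,i+2}.
\]
Both routes produce $x_{i+1,i+2}$, so the two composite homomorphisms agree on every generator $x_i$, and therefore agree on all of $\square_q$. This establishes the commutativity of the diagram.

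There is essentially no genuine obstacle here; the result is a direct index-chasing computation on generators. The only point requiring care is bookkeeping of the two different meanings of the symbol $\rho$ (the automorphism of $\square_q$ versus that of $\boxtimes_q$, as flagged in the Note after Lemma~{\ref{lem:776}}), and confirming that the subscript shift $i\mapsto i+1$ on the $\square_q$ side matches the simultaneous shift $(i,j)\mapsto(i+1,j+1)$ on the $\boxtimes_q$ side after passing through $\eta\circ\psi$. As the computation above shows, these shifts are perfectly compatible, which is exactly why both automorphisms are denoted $\rho$.
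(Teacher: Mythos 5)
Your proof is correct and is exactly what the paper's one-line proof (``Use the definitions of the maps in question'') abbreviates: since all maps are $\mathbb{F}$-algebra homomorphisms and $\square_q$ is generated by the $x_i$, checking $x_i \mapsto x_{i+1,i+2}$ along both routes suffices. No issues.
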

\begin{proof}
Use the definitions of the maps in question.
\end{proof}

\begin{thm}
\label{thm:737}
Let $V$ denote a finite-dimensional irreducible $\boxtimes_q$-module of type $1$. Then the Drinfel'd polynomials for $V$ and $^\rho V$ are partners in the sense of Definition {\ref {def:part}}.
\end{thm}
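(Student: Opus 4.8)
The plan is to transport Theorem \ref{thm:37}, the analogous statement for $\square_q$, across the injection $\eta\circ\psi\colon\square_q\to\boxtimes_q$, exploiting the compatibility of the two copies of $\rho$ recorded in Lemma \ref{lem:com}. Write $\varphi=\eta\circ\psi$, and let $\rho_{\square}$ and $\rho_{\boxtimes}$ denote the automorphism $\rho$ of $\square_q$ (Lemma \ref{lem:rho}) and of $\boxtimes_q$ (Lemma \ref{lem:776}) respectively. By Definition \ref{def:7104}, the Drinfel'd polynomial of a finite-dimensional irreducible $\boxtimes_q$-module $U$ of type $1$ is by definition the Drinfel'd polynomial, in the sense of Definition \ref{def:104}, of the $\square_q$-module $\varphi^\sharp U$ obtained by pullback. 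Thus the entire statement reduces to understanding how pullback along $\varphi$ interacts with twisting by $\rho$.

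The key observation is that pullback along $\varphi$ intertwines the two twists: for any $\boxtimes_q$-module $U$ there is an equality of $\square_q$-modules
\[
\varphi^\sharp\bigl({}^{\rho}U\bigr)={}^{\rho}\bigl(\varphi^\sharp U\bigr).
\]
Indeed, on the common underlying vector space $U$, an element $a\in\square_q$ acts on $\varphi^\sharp({}^{\rho}U)$ by $v\mapsto \rho_{\boxtimes}(\varphi(a))v$, whereas on ${}^{\rho}(\varphi^\sharp U)$ it acts by $v\mapsto \varphi(\rho_{\square}(a))v$; these coincide since $\varphi\circ\rho_{\square}=\rho_{\boxtimes}\circ\varphi$ by Lemma \ref{lem:com}. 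First I would record this identity, together with the remark that twisting by $\rho$ preserves type $1$: twisting by an automorphism preserves finite dimension and irreducibility, and if each generator $x_{ij}$ of $\boxtimes_q$ has eigenvalues $\{q^{d-2i}\}_{i=0}^d$ on $U$ (Lemma \ref{lem:74}), then on ${}^{\rho}U$ the generator $x_{ij}$ acts as $x_{i+1,j+1}$ on $U$, which has the same eigenvalue set, so ${}^{\rho}U$ again has type $1$ and diameter $d$.

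With these in hand the argument is immediate. Let $V$ be a finite-dimensional irreducible $\boxtimes_q$-module of type $1$ and set $W=\varphi^\sharp V$, which is a finite-dimensional irreducible $\square_q$-module of type $1$ by Theorem \ref{thm:eta}. By Definition \ref{def:7104} the Drinfel'd polynomial of $V$ equals $P_W$. Applying the same definition to the type $1$ module ${}^{\rho}V$ and then invoking the intertwining identity, the Drinfel'd polynomial of ${}^{\rho}V$ equals $P_{\varphi^\sharp({}^{\rho}V)}=P_{{}^{\rho}W}$. Now Theorem \ref{thm:37}, applied to the $\square_q$-module $W$, asserts that $P_W$ and $P_{{}^{\rho}W}$ are partners in the sense of Definition \ref{def:part}. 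Hence the Drinfel'd polynomials for $V$ and ${}^{\rho}V$ are partners, as claimed.

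I expect no serious obstacle here: the whole content is the intertwining identity $\varphi^\sharp({}^{\rho}U)={}^{\rho}(\varphi^\sharp U)$, which is a one-line consequence of the commuting square of Lemma \ref{lem:com} and the definitions of pullback and twist from Section 2. The only points demanding a moment of care are the bookkeeping of which automorphism $\rho$ is meant on each side, and the verification that twisting by $\rho$ preserves type $1$ so that Definition \ref{def:7104} legitimately applies to ${}^{\rho}V$; both are routine.
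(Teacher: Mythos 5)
Your proposal is correct and follows essentially the same route as the paper, whose proof is precisely the combination of Theorem \ref{thm:eta}, Lemma \ref{lem:com}, and Theorem \ref{thm:37}; your intertwining identity $\varphi^\sharp({}^{\rho}U)={}^{\rho}(\varphi^\sharp U)$ is exactly the content that Lemma \ref{lem:com} supplies, spelled out in detail. The added checks (that twisting by $\rho$ preserves type $1$, and that Definition \ref{def:7104} identifies the Drinfel'd polynomial of a $\boxtimes_q$-module with that of its pullback $\square_q$-module) are the right points of care and are consistent with the paper.
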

\begin{proof}
By Lemmas {\ref {thm:eta}}, {\ref {lem:com}} and Theorem {\ref {thm:37}}.
\end{proof}

\begin{cor}
\label{cor:738}
Let $V$ denote a finite-dimensional irreducible $\boxtimes_q$-module of type $1$. Then the $\boxtimes_q$-modules $V$ and $^{\rho^2} V$ are isomorphic.
\end{cor}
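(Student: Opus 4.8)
The plan is to follow the template of Corollary~\ref{cor:38}, the corresponding statement for $\square_q$, substituting the $\boxtimes_q$ results for their $\square_q$ analogues. The two ingredients are Theorem~\ref{thm:737}, which says that the Drinfel'd polynomials for $V$ and $^\rho V$ are partners in the sense of Definition~\ref{def:part}, and Proposition~\ref{thm:736}, which says that $V\mapsto P_V$ is a bijection from the isomorphism classes of finite-dimensional irreducible $\boxtimes_q$-modules of type~$1$ onto the polynomials in $\mathbb{F}[z]$ with constant coefficient~$1$ that do not vanish at $z=1$.

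First I would record that $^\rho V$ is again a finite-dimensional irreducible $\boxtimes_q$-module of type~$1$, so that Theorem~\ref{thm:737} may be applied to it a second time. Since $\rho$ is an automorphism, $^\rho V$ is finite-dimensional and irreducible. Moreover $\rho$ permutes the generators $x_{ij}$ among themselves, and by Lemma~\ref{lem:74} every generator of $\boxtimes_q$ has the same eigenvalue set $\{\gamma q^{d-2i}\}_{i=0}^d$ on $V$; hence each generator has this same eigenvalue set on $^\rho V$, so $^\rho V$ has the same diameter $d$ and type $\gamma$. For type~$1$ this shows $^\rho V$ has type~$1$.

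Next I would apply Theorem~\ref{thm:737} twice. Writing $P_V$ for the Drinfel'd polynomial of $V$, the first application gives $P_{^\rho V}=P_V^\vee$. Applying Theorem~\ref{thm:737} to the type~$1$ module $^\rho V$ and using the identification $^\rho(^\rho V)={}^{\rho^2}V$, the second application gives $P_{^{\rho^2}V}=(P_V^\vee)^\vee$. Since $(P_V^\vee)^\vee=P_V$ by Definition~\ref{def:part}, we conclude $P_{^{\rho^2}V}=P_V$. Thus $V$ and $^{\rho^2}V$ are finite-dimensional irreducible $\boxtimes_q$-modules of type~$1$ with equal Drinfel'd polynomial, and the injectivity of the map in Proposition~\ref{thm:736} forces $V\cong{}^{\rho^2}V$. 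I do not expect a genuine obstacle here; the only points needing care are the verification that twisting preserves type~$1$ and the observation that iterating the partner operation $f\mapsto f^\vee$ returns $f$.
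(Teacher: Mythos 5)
Your proposal is correct and follows exactly the paper's own proof, which applies Theorem \ref{thm:737} twice to $V$ and then invokes the injectivity of the bijection in Proposition \ref{thm:736}. The extra details you supply (that $^\rho V$ is again irreducible of type $1$, via Lemma \ref{lem:74}, and that $(f^\vee)^\vee = f$) are precisely the routine verifications the paper leaves implicit.
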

\begin{proof}
Apply Theorem {\ref {thm:737}} twice to $V$, and use Proposition {\ref {thm:736}}.
\end{proof}

\section{Acknowledgement}
This paper was written while the author was a graduate student at the University of
Wisconsin-Madison. The author would like to thank his advisor, Paul Terwilliger, for offering
many valuable ideas and suggestions.

\end{document}